\DeclareSymbolFont{rsfscript}{OMS}{rsfs}{m}{b}
\DeclareSymbolFontAlphabet{\mathrsfs}{rsfscript}
\renewcommand{\mathcal}{\mathrsfs}
\def\CC{{\mathcal{C}}}
\def\EC{{\mathcal{E}}}
\def\JC{{\mathcal{J}}}
\def\XC{{\mathcal{X}}}
\def\RM{{\mathbb{R}}}
\def\NM{{\mathbb{N}}}
\def\ZM{{\mathbb{Z}}}
\def\SG{{\mathfrak{S}}}
\def\a{\alpha}
\def\b{\beta}
\def\g{\gamma}
\def\G{\Gamma}
\def\d{\delta}
\def\D{\Delta}
\def\ph{\varphi}
\def\l{\lambda}
\def\s{\sigma}
\def\th{\theta}
\def\to{\rightarrow}
\def\longto{\longrightarrow}
\def\fonctionl#1#2#3#4#5{\begin{array}{rccl}
{#1} : & {#2} & \longto & {#3} \\
& {#4} & \longmapsto & {#5} 
\end{array}}
\def\fonctio#1#2#3#4{\begin{array}{ccc}
{#1} & \longto & {#2} \\
{#3} & \longmapsto & {#4} 
\end{array}}
\def\lexp#1#2{\kern\scriptspace\vphantom{#2}^{#1}\kern-\scriptspace#2}
\def\le{\hspace{0.1em}\mathop{\leqslant}\nolimits\hspace{0.1em}}
\def\ge{\hspace{0.1em}\mathop{\geqslant}\nolimits\hspace{0.1em}}
\DeclareMathOperator{\supp}{{\mathrm{supp}}}
\DeclareMathOperator{\Ker}{{\mathrm{Ker}}}
\DeclareMathOperator{\restilde}{\widetilde{\mathrm{Res}}}
\DeclareMathOperator{\Stab}{{\mathrm{Stab}}}
\DeclareMathOperator{\Irr}{{\mathrm{Irr}}}
\DeclareMathOperator{\Ind}{{\mathrm{Ind}}}
\DeclareMathOperator{\Res}{{\mathrm{Res}}}
\def\itemth#1{\item[${\mathrm{(#1)}}$]}
\def\vide{\varnothing}
\def\DS{\displaystyle}
\def\SS{\scriptstyle}
\newcommand{\mpair}[1]{\pair{\,#1\,}}
\newcommand{\mset}[1]{\set{\,#1\,}}
\newcommand{\pair}[1]{\langle #1\rangle}
\newcommand{\set}[1]{\{#1\}}
\newcommand{\parmap}{j}
\newcommand{\Nat}{{\mathbb N}}
\newcommand{\real}{{\mathbb R}}
\newcommand{\mc}{\mathcal}
\newcommand{\ck}[1]{{#1}^\vee}
\newcommand{\QM}{{\mathbb  Q}}
\newtheorem{theorem}{Theorem}[section]
\newtheorem{proposition}[theorem]{Proposition}
\newtheorem{lemma}[theorem]{Lemma}
\newtheorem{corollary}[theorem]{Corollary}
\def\Wti{{\widetilde{W}}}
\def\Jti{{\widetilde{J}}}
\def\Xti{{\widetilde{X}}}
\def\AG{\mathfrak{A}}
\newcommand{\Piwt}{{\widetilde{\Pi}}}
\def\ellt{\tilde{\ell}}
\def\sti{{\tilde{s}}}
\def\tti{{\tilde{t}}}
\def\Tti{{\widetilde{T}}}
\def\Delt{{\widetilde{\D}}}
\def\AGt{{\widetilde{\AG}}}
\def\xti{{\tilde{x}}}
\def\mti{{\tilde{m}}}
\def\Mti{{\widetilde{M}}}
\def\Cti{{\widetilde{C}}}
\def\alpt{{\tilde{\alpha}}}
\def\bett{{\tilde{\beta}}}
\def\wti{{\tilde{w}}}
\def\TC{{\mathcal{T}}}
\def\dotcup{\hskip1mm\dot{\cup}\hskip1mm}
\begin{document}

\baselineskip=16pt

\title{Semidirect product decomposition \\
of Coxeter groups}

\author{C\'edric Bonnaf\'e}
\address{\noindent 
Laboratoire de Math\'ematiques de Besan\c{c}on (CNRS: UMR 6623), 
Universit\'e de Franche-Comt\'e, 16 Route de Gray, 25030 Besan\c{c}on
Cedex, France} 

\makeatletter
\email{cedric.bonnafe@univ-fcomte.fr}

\makeatother

\author{Matthew J. Dyer}
\address{Department of Mathematics\\
255 Hurley Building\\ University of Notre
Dame\\ Notre Dame, Indiana, 46556-4618\\U.S.A.}

\makeatletter
\email{dyer.1@nd.edu}

\thanks{The first author is partly supported by the ANR (Project 
No JC07-192339)}

\makeatother


\subjclass{According to the 2000 classification: 20F55}

\date{\today}

\begin{abstract} 
Let $(W,S)$ be a Coxeter system, let 
$S=I \hskip1mm{\cup}\hskip1mm J$ be a partition of $S$ 
such that no element of $I$ is conjugate to an element of $J$, 
let $\Jti$ be the set of $W_I$-conjugates of elements of $J$ 
and let $\Wti$ be the subgroup of $W$ generated by $\Jti$. 
We show that $W=\Wti \rtimes W_I$ and that 
$\Jti$ is the canonical set of Coxeter generators of the reflection subgroup $\Wti$
of $W$. We also provide algebraic and geometric conditions for an external semidirect product of Coxeter groups to arise in this way, and explicitly describe all such decompositions of (irreducible) finite Coxeter groups and affine Weyl groups. \end{abstract}

\maketitle

\pagestyle{myheadings}

\markboth{\sc C. Bonnaf\'e \& M. Dyer}{\sc Semidirect product 
of Coxeter groups}


\section*{Introduction}

\medskip

Let $(W,S)$ be a Coxeter system and assume that $S$ is the union
of two subsets $I $ and $J$ such that no element of $I $ is conjugate
to an element of $J$. Let $W_I$ be the subgroup of $W$ generated by $I$.
Let $\Jti$ be the set of elements of the form $wsw^{-1}$ where $w$ is in $W_I$
and $s$ is in $J$. Let $\Wti$ be the subgroup of $W$ generated by $\Jti$. 
In \cite{gal}, the following is shown:\bigskip

\noindent{\bf Theorem (Gal).}
{\it 
 With the above notation, we have:
\begin{itemize}
\itemth{a}  $W=  \Wti \rtimes W_I$ (semidirect product with $\Wti$ normal).

\itemth{b} $(\Wti,\Jti)$ is a Coxeter system 
\end{itemize}
}

\bigskip

\noindent{\sc Notation, remark, definition - } 
Let $T=\DS{\mathop{\cup}_{w \in W} wSw^{-1}}$ be the 
set of reflections of $W$. If $w \in W$, we set 
$N(w)=\{t \in T~|~\ell(wt) < \ell(w)\}$
where $\ell$ is the length function of $(W,S)$. 
If $W'$ is a subgroup of $W$ generated by reflections, we set 
$$\chi(W')=\{t \in T~|~N(t) \cap W'=\{t\}\}.$$
Then \cite[(3.3)]{dyer} $(W',\chi(W'))$ is a Coxeter system: $\chi(W')$ 
is called the set of {\it canonical Coxeter generators} of $W'$.

\bigskip

The following theorem clarifies the relation between the two natural  sets $\Jti$, $\chi(\Wti)$ of Coxeter generators
of the normal  reflection subgroup $\Wti$ of $W$.

\bigskip

\noindent{\bf Theorem.} 
{\it In the above setting, 
\begin{itemize} \itemth{a} $\Jti=\chi(\Wti)$ is the set of 
canonical Coxeter generators of $\Wti$.

\itemth{b} Each element $w$ of $W_I$ is the unique element 
of minimal length in its coset $\Wti w=w\Wti$.\end{itemize}}

\bigskip

We remark that our result that $\Jti=\chi(\Wti)$ is significantly 
stronger than the result of \cite{gal} that $(\Wti,\Jti)$ is a Coxeter system,
as it enables one to apply the  favorable combinatorial and algebraic
properties of the canonical Coxeter generators of  reflection subgroups
(see for example Lemma \ref{dyer X} and the remark after Corollary  
\ref{longueurs ajoutees}).

\medskip

In this paper, we provide a simple algebraic proof of both theorems above, independent of the
results of \cite{gal}, and  also describe algebraic   conditions (Theorem \ref{external}) under which an external semidirect product of Coxeter groups is naturally a Coxeter group.  We also  provide an alternative proof
(using root systems, see the proof of Theorem \ref{alternative}) of the fact that $\Jti=\chi(\Wti)$ 
 and of  a formula for the Coxeter matrix of $(\Wti, \Jti)$ obtained in \cite{gal}. 
Another main result is Theorem \ref{ext root}, which  is a  variant of Theorem  \ref{external}, providing geometric conditions for the external semidirect product of two Coxeter systems to be a Coxeter system, when each is attached to a  root system in the same  ambient real vector space and the Coxeter group attached to the first root system acts as a group of  automorphisms of the second based root system.  We include some general  results which are specific to the case in which  $W$ is finite or affine, including a construction of a homomorphism between Solomon descent algebras 
of $W$ and $\Wti$  when $W$ is finite. Finally, we describe explicitly by tables the  internal semidirect  product decompositions (as above)   of (irreducible)   finite Coxeter groups and affine Weyl groups.

\bigskip

\noindent{\sc Comment - } 
Semidirect product decompositions as above are used by the first author \cite{semicontinu} 
in  studying the Hecke algebra and the Kazhdan-Lusztig theory 
with unequal parameters whenever the parameters are zero on the 
set $I$. In fact, the authors were   unaware of \cite{gal} and originally   obtained both theorems above, together with  the same  description of the Coxeter matrix of $(\Wti,\Jti)$ as in \cite{gal}, by the above-mentioned root-system arguments. We are indebted to Koji Nuida
for bringing \cite{gal} to our attention.

\bigskip

\noindent{\sc Aknowledgements - } 
Part of this work was done while the first 
author was visiting the MSRI during the winter 2008. He wishes to 
thank the Institute for its hospitality and the organizers of the two 
programs for their invitation.

\newpage

\section{Internal semidirect products}

\medskip

For convenience, we restate the two Theorems of the introduction here in combined form.
\begin{theorem} \label{dyer} With the  notation of the introduction, we have:
\begin{itemize}
\itemth{a}  $W=  \Wti \rtimes W_I$ (semidirect product with $\Wti$ normal).

\itemth{b} $(\Wti,\Jti)$ is a Coxeter system. 

\itemth {c} $\Jti=\chi(\Wti)$ is the set of 
canonical Coxeter generators of $\Wti$.

\itemth{d} Each element $w$ of $W_I$ is the unique element 
of minimal length in its coset $\Wti w=w\Wti$.\end{itemize}
\end{theorem}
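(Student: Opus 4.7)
The plan is to prove (a), (d), (c), and (b) in that order, the principal tool being a projection homomorphism $\pi\colon W\to W_I$.

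I would first construct $\pi\colon W\to W_I$ by $\pi(s)=s$ for $s\in I$ and $\pi(s)=1$ for $s\in J$. The hypothesis that no element of $I$ is $W$-conjugate to one of $J$ forces $m(s,t)$ to be even for all $s\in I$, $t\in J$ (otherwise $s$ and $t$ would be conjugate inside the dihedral group $\langle s,t\rangle$), which is precisely what is needed for $\pi$ to respect the Coxeter relations. A direct check on the defining conjugates gives $\Jti\subseteq\ker\pi$, hence $\Wti\subseteq\ker\pi$; since $\pi|_{W_I}=\mathrm{id}$, this already gives $W_I\cap\Wti=\{1\}$. To finish (a) I would show $W=\Wti W_I$ by induction on $\ell(w)$: writing $w=sw'$ with $w'=\wti' w_I'$ by induction, either $s\in J\subseteq\Jti\subseteq\Wti$ handles the case $s\in J$, or for $s\in I$ one uses the identity $s\wti'=(s\wti' s)\cdot s$ together with $W_I$-stability of $\Jti$. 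Normality of $\Wti$ then follows from $\Wti=\ker\pi$. For (d), $\pi$ is length-non-increasing on generators, hence globally, so every $w$ in the coset $\Wti w_0$ satisfies $\ell(w_0)=\ell(\pi(w))\leq\ell(w)$; uniqueness comes from inspecting a reduced expression $w=s_1\cdots s_n$, since equality $\ell(w_0)=n$ forces no $s_i$ to lie in $J$, so $w\in W_I$ and $w=w_0$.

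For (c), I pass to the geometric representation and split $\Phi^+=\Phi_I^+\sqcup\Phi_J^+$ according to the $W$-conjugacy class of the reflection attached to each root. The critical point is that $W_I$ preserves $\Phi_J^+$: for $s\in I$ and $\alpha\in\Phi_J^+$, one has $\alpha\neq\alpha_s$ (since $\Phi_I\cap\Phi_J=\emptyset$), so $s\alpha\in\Phi^+$ by the standard fact that $s$ permutes $\Phi^+\setminus\{\alpha_s\}$, and $s\alpha\in\Phi_J$ because $W$ preserves this $W$-orbit union. Using (a) one shows $\Phi(\Wti)=\Phi_J$ and that $\Phi_J^+$ is its positive system, so $\chi(\Wti)$ corresponds to the simple roots of the based root subsystem $(\Phi_J,\Phi_J^+)$. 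The inclusion $\Jti\subseteq\chi(\Wti)$ is then a direct calculation with $t=w_I sw_I^{-1}$, using the $W_I$-stability of $\Phi_J^+$ and the fact that $s\in J$ sends only $\alpha_s$ among positive roots to a negative root. For the reverse $\chi(\Wti)\subseteq\Jti$, I induct on the height of a simple root $\alpha$ of $\Wti$. The base case $\alpha\in\Pi$ gives $\alpha\in\Pi\cap\Phi_J=\Pi_J$. Otherwise, choose $s\in S$ with $\langle\alpha,\alpha_s^\vee\rangle>0$, so $s\alpha\in\Phi^+$ is of strictly smaller height. If $s\in J$, then $\alpha_s\in\Pi_J\subseteq\Phi_J^+$ and $s\alpha\in\Phi_J^+$, and the identity $\alpha=s\alpha+\langle\alpha,\alpha_s^\vee\rangle\alpha_s$ writes $\alpha$ as a positive combination of two elements of $\Phi_J^+$ distinct from $\alpha$, contradicting simplicity of $\alpha$ in $\Wti$. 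Hence $s\in I$, $s\alpha$ is a simple root of $\Wti$ of smaller height (simplicity being preserved because $W_I$ permutes the simple roots of $\Wti$), and by induction $s\alpha\in W_I\cdot\Pi_J$, whence $\alpha=s(s\alpha)\in W_I\cdot\Pi_J$ and $s_\alpha\in\Jti$. Statement (b) is then immediate from (c) together with Dyer's theorem recalled in the introduction, which asserts that $(\Wti,\chi(\Wti))$ is a Coxeter system.

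The main obstacle is the direction $\chi(\Wti)\subseteq\Jti$ in (c). The key preparatory fact is the stabilisation of the positive system $\Phi_J^+$ (and not merely the set $\Phi_J$) by $W_I$, which depends on the disjointness $\Phi_I\cap\Phi_J=\emptyset$ coming from the non-conjugacy hypothesis; without this, the height-reduction step could leave the positive system, and ``simple root of $\Wti$ of smaller height'' would not make sense. Once this is in place, the inductive argument hinges on the case $s\in J$ being impossible for a simple root of $\Wti$, which is precisely what the identity $\alpha=s\alpha+\langle\alpha,\alpha_s^\vee\rangle\alpha_s$ enforces.
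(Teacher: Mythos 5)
Your proposal is correct, and for parts (c) and (d) it takes a genuinely different route from the paper. For (a) you use the same projection $\pi$ and essentially the same length induction to get $W=\Wti W_I$. For (d) the paper invokes Dyer's lemma on minimal coset representatives for reflection subgroups (so its proof of (d) depends on (c)), whereas your observation that $\pi$ is length-non-increasing and restricts to the identity on the $W_I$-component of each coset gives (d) directly from (a) alone --- a genuine simplification. The real divergence is in (c): the paper proves only the inclusion $\Jti\subseteq\chi(\Wti)$, by expanding $N(wrw^{-1})$ with the cocycle identity and noting that $N(w)\cup wrN(w^{-1})rw^{-1}$ consists of reflections conjugate into $I$; equality then comes for free because a set of canonical Coxeter generators is a minimal generating set (Bourbaki) while $\Jti$ already generates $\Wti$. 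You instead work in the geometric representation, prove $W_I$-stability of $\Phi_J^+$, verify $\Jti\subseteq\chi(\Wti)$ on roots, and establish the reverse inclusion by height induction. That reverse inclusion is logically unnecessary (the minimality trick would finish for you too), but your argument is sound and buys something the paper only gets in its separate root-theoretic proof (Theorem \ref{alternative}), namely the explicit simple system $\Piwt=W_I(\Pi_J)$ of $\Wti$; note that your height induction is different from, and arguably lighter than, the inner-product computations of that theorem. Two small points you should make explicit: the identification $\Phi(\Wti)=\Phi_J$ uses that a reflection conjugate to an element of $I$ has nontrivial image under $\pi$ and so cannot lie in $\Wti=\Ker\pi$; and the step ``contradicting simplicity of $\alpha$'' needs the standard fact that a canonical simple root $\alpha$ of a reflection subgroup is not a positive combination of other positive roots of that subgroup --- this follows because $s_\alpha$ would send such a relation $\alpha=c\beta+d\gamma$ to an expression of $-\alpha$ as a nonnegative combination of positive roots, contradicting the positive independence of $\Pi$.
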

\begin{proof}
 If $s$ and $t$ are  elements of $T$, we denote by $m_{s,t}$ the order of $st$. 
It is well known that two simple reflections are $W$-conjugate iff, 
regarded as vertices of the Coxeter graph of $(W,S)$, 
there is a path from one to the other such that each edge of the path 
has either an odd label or no label (i.e. a label of $3$, which is omitted 
by the standard convention). In particular:
\begin{equation}\label{even}
\text{\it If $s \in I$ and $t \in J$, then $m_{s,t}$ is even.}
\end{equation}

We first prove (a). Let 
$$\fonctionl{\ph}{S}{W_I}{s}{\begin{cases}
s & \text{if $s \in I$,}\\
1 & \text{otherwise.}
\end{cases}}$$
It follows easily from (\ref{even}) that 
$(\ph(s)\ph(t))^{m_{st}}=1$ for all $s$, $t \in S$. Therefore, 
there exists a unique morphism of groups $W \to W_I$ extending $\ph$:
we still denote it by $\ph$. Since $\ph(w)=w$ for all $w \in W_I$, 
it is sufficient to prove that 
\begin{equation}\label{kernel}
\Ker \ph = \Wti.
\end{equation}
Let us prove (\ref{kernel}). First of all, note that $\Wti \subseteq \Ker \ph$. 
So it is enough to show that $W=\Wti W_I$. For this, it is sufficient to show that if $w\in W\setminus W_I$, there is some $t\in \Jti$ with
$l(tw)<l(w)$. Write $w=s_1\cdots s_n$ (reduced) with all $s_i \in S$.
Since $w\not\in W_I$, there is some $j$ with $s_j \in J$. Without 
loss of generality, assume that $j$ is minimal with this property. Then 
$t:=s_1\cdots  s_{j-1}s_j s_{j-1}\cdots s_1\in \Jti$ and 
$l(tw)<l(w)$ as required. This completes the proof of (a).

We claim next  that  $\Wti\cap T$  consists of all $W$-conjugates of elements of $J$.
In fact,  since $\Wti$ is generated by $\Jti\subseteq T$,
\cite[3.11(ii)]{dyer} implies that $\Wti\cap T$ consists of the $\Wti$-conjugates of elements of $\Jti$; since $\Jti$ consists of the  $W_{I}$-conjugates of elements of $J$, (a) implies that 
the $\Wti$-conjugates of elements of $\Jti$ are exactly the  $W$-conjugates of elements of $J$, completing the proof of the claim. 

We can  now prove (c) (which immediately implies (b)). 
Regard the power set $\mathcal{P}(T)$
of $T$ as an abelian group under symmetric difference $A+B:=(A\cup B)\setminus (A \cap B)$
and with natural $W$-action $(w,A)\mapsto wAw^{-1}:=\mset{wtw^{-1}\mid t\in A}$.
The function $N\colon W\rightarrow \mathcal{P}(T)$ is characterized by the cocycle condition  $N(xy)=N(x)+xN(y)x^{{-1}}$ for $x,y\in W$, and its special values $N(s)=\set{s}$ for $s\in S$
(see \cite{dyer}). Consider an element $t\in \Jti$, say $t=wrw^{{-1}}$ where $w\in W_{I}$ and $r\in J$. We have 
\[ N(t)=N(wrw^{{-1}})=N(w)+wN(r)w^{{-1}}+wrN(w^{-1})rw^{{-1}}\]
Note that  for $x\in W_{I}$, $N(x)\subseteq W_{I}\cap T$ consists of reflections  which are $W$-conjugate to elements of $I$, and hence all elements
of $N(w)\cup wrN(w^{{-1}})rw^{{-1}}$ are $W$-conjugate to elements of $I$. 
From the claim and the assumption that no element of $I$ is $W$-conjugate 
to an element of $J$, it  therefore follows that   
$N(t)\cap \Wti=wN(r)w^{{-1}}\cap \Wti=\set{t}$ i.e. $t\in \chi(\Wti)$.
This proves that $\Jti\subseteq \chi(\Wti)$. Now $\chi(\Wti)$ is a set of Coxeter generators of
$\Wti$,  and hence it is a minimal (under inclusion) set of generators of  $\Wti$ by 
\cite[Ch IV, \S 1,  Cor  3]{bourbaki}. Since $\Jti$ generates $\Wti$ by definition, we get
(b)--(c)

We now  state a Lemma
needed in the proof of   (d) (and elsewhere  in this paper).

\begin{quotation}
{\small 
\begin{lemma}\label{dyer X}
Let $W'$ be a subgroup of $W$ generated by reflections, let $S'=\chi(W')$ and 
let $X'$ be the set of elements $x \in W$ such that $x$ has minimal length 
in $xW'$. Then:
\begin{itemize}
\itemth{a} Every coset in $W/W'$ contains a unique element of $X'$. 

\itemth{b} An element $x \in W$ belongs to $X'$ if and only if 
$\ell(xt) > \ell(x)$ for all $t \in S'$. 

\itemth{c}  If $x\in X'$, $w\in W'$ and $t\in W'\cap T$, then $\ell(xwt)>\ell(xw)$ iff
$\ell(wt)>\ell(w)$ iff $\ell'(wt)>\ell'(wt)$ where $\ell'$ is the length function of $(W',S')$.
\end{itemize}
\end{lemma}

\begin{proof}
See \cite[(3.4)]{dyer}.
\end{proof}
}
\end{quotation}

If $t\in \Jti$ and $w\in W_I$, then $l(wt)>l(w)$ since 
$t\not\in W_I$. Since $\Jti$ is the set of canonical generators of $\Wti$, 
this implies that $w\in W_I$ is the (unique) element of minimal length in 
its  coset $w\Wti $ by Lemma \ref{dyer X} (a)--(b). Theorem \ref{dyer} (d) follows from 
 the fact that $W=W_{I}\Wti$. \end{proof}
\medskip

In the remainder of this section, we give some simple complements to and  consequences of Theorem
\ref{dyer}, with applications  in \cite{semicontinu} or  to explicit  computation in examples, and then describe the Coxeter matrix of $(\Wti, \Jti)$.

Let $\ellt\colon \Wti\rightarrow \Nat$ denote the length function of $(\Wti,\Jti)$. If $w \in W$, we denote by $\ell_I(w)$ (respectively 
$\ell_J(w)$) the number of occurrences of elements of $I$ 
(respectively $J$) in a reduced expression of $w$ (note that 
these two numbers do not depend on the choice of the reduced 
expression and that $\ell(w)=\ell_I(w)+\ell_J(w)$).

\medskip

\begin{corollary}\label{longueurs ajoutees}
Let $a \in W_I$ and $w \in \Wti$. Then 
$\ell_J(aw)=\ell_J(wa)=\ell_J(w)=\ellt(w)$ and $\ellt(awa^{-1})=\ellt(w)$. 
\end{corollary}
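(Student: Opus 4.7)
The plan is to reinterpret $\ell_J$ as a count of certain inversions, then use the cocycle property of $N$ together with the normality of $\Wti$ in $W$ to obtain all of the claimed identities.

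First, I would show that $\ell_J(w)=|N(w)\cap\Wti|$ for every $w\in W$. Given a reduced expression $w=s_1\cdots s_n$, the inversion set $N(w)$ consists of the $n$ distinct reflections $\beta_i=s_n\cdots s_{i+1}s_is_{i+1}\cdots s_n$, each $W$-conjugate to $s_i$. The Claim established inside the proof of Theorem~\ref{dyer} identifies $\Wti\cap T$ with the set of $W$-conjugates of elements of $J$, and by hypothesis no element of $I$ is $W$-conjugate to any element of $J$; hence $\beta_i\in\Wti$ iff $s_i\in J$, giving the formula. Next, I would use the cocycle identity $N(xy)=N(x)+xN(y)x^{-1}$, the distributivity $(A+B)\cap C=(A\cap C)+(B\cap C)$, and the normality of $\Wti$. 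For $a\in W_I$, the formula just proved yields $N(a)\cap\Wti=\emptyset$ (since $\ell_J(a)=0$), and then $wN(a)w^{-1}\cap\Wti=w(N(a)\cap\Wti)w^{-1}=\emptyset$ by normality; plugging in gives $N(aw)\cap\Wti=a(N(w)\cap\Wti)a^{-1}$ and $N(wa)\cap\Wti=N(w)\cap\Wti$. Therefore $\ell_J(aw)=\ell_J(wa)=\ell_J(w)$ for every $w\in W$.

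To identify $\ell_J$ with $\ellt$ on $\Wti$, I would apply Lemma~\ref{dyer X}(c) with $W'=\Wti$ and $x=1\in X'$: for all $w\in\Wti$ and $t\in\Wti\cap T$, $t\in N(w)$ iff $\ellt(wt)<\ellt(w)$. Hence $N(w)\cap\Wti$ coincides with the $(\Wti,\Jti)$-inversion set of $w$, whose cardinality equals $\ellt(w)$. Finally, for the conjugation assertion, normality of $\Wti$ gives $awa^{-1}\in\Wti$, so the previous steps combine to yield
\[ \ellt(awa^{-1})=\ell_J(awa^{-1})=\ell_J(wa^{-1})=\ell_J(w)=\ellt(w). \]
The only mildly delicate point is the symmetric-difference book-keeping in the invariance step; everything else follows immediately from results already established.
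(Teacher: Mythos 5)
Your proof is correct, but it takes a genuinely different route from the paper's for the central identity $\ell_J(w)=\ellt(w)$. The paper proves this by induction on $\ellt(w)$: it picks $\tti\in\Jti$ with $\ellt(\tti w)<\ellt(w)$, conjugates by an element of $W_I$ to reduce to $\tti=t\in J$, and uses Lemma \ref{dyer X}(c) to transfer the length drop to $(W,S)$; the bi-invariance $\ell_J(aw)=\ell_J(wa)=\ell_J(w)$ and the conjugation-invariance of $\ellt$ are handled separately by direct reduced-expression and automorphism arguments. You instead establish the reflection-theoretic formula $\ell_J(w)=|N(w)\cap\Wti|$ for all $w\in W$ (using the Claim from the proof of Theorem \ref{dyer} and the hypothesis that $I$ and $J$ are not conjugate), deduce the bi-invariance from the cocycle identity together with normality of $\Wti$, and then identify $N(w)\cap\Wti$ with the $(\Wti,\Jti)$-inversion set via Lemma \ref{dyer X}(c). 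Both arguments rest on the same two pillars --- Lemma \ref{dyer X}(c) and the fact that $\Jti=\chi(\Wti)$ (Theorem \ref{dyer}(c)), consistent with the remark following the corollary --- but yours buys more: the formula $\ell_J(w)=|N(w)\cap\Wti|$ holds for all $w\in W$, it gives a proof that $\ell_J$ is independent of the chosen reduced expression (which the paper only asserts), and it makes the $W_I$-bi-invariance structural rather than computational. One cosmetic caveat on the ``delicate bookkeeping'' you mention: the paper defines $N(w)=\{t\in T\mid \ell(wt)<\ell(w)\}$ but states the cocycle in the form $N(xy)=N(x)+xN(y)x^{-1}$, which strictly belongs to the left-handed convention $\{t\mid \ell(tw)<\ell(w)\}$; your argument is insensitive to this because under either convention the intersection with the normal subgroup $\Wti$ changes only by a conjugation, so the cardinality conclusions $\ell_J(aw)=\ell_J(wa)=\ell_J(w)$ survive intact.
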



\begin{proof}
Since $W_I$ acts on $\Wti$ by preserving $\Jti$, the length function 
$\ellt$ is invariant by $W_I$-conjugation, so $\ellt(awa^{-1})=\ellt(w)$. 
Also, if $s \in I$ and $x \in W$, we have $\ell_J(sx)=\ell_J(x)=\ell_J(xs)$, 
so this shows that $\ell_J(aw)=\ell_J(wa)=\ell_J(w)$. 

It remains to show that $\ell_J(w)=\ellt(w)$. 
We argue by induction on $\ellt(w)$. The result is clear if $\ellt(w)=0$. 
So assume that $\ellt(w) > 0$. Then there exists $\tti \in \Jti$ such that 
$\ellt(\tti w) < \ellt(w)$. Let $x \in W_I$ and $t \in J$ be such that 
$\tti=xtx^{-1}$. Let $w'=x^{-1}wx$. Since $\ellt$ is invariant 
by $W_I$-conjugation, we get that $\ellt(w)=\ellt(w')$ and 
$\ellt(\tti w)=\ellt(t w')$. Therefore $\ellt(tw')=\ellt(w')-1$ 
and so, by Lemma \ref{dyer X} (c), we get that $\ell(tw')=\ell(w')-1$. 
In particular, $\ell_J(tw')=\ell_J(w')-1$. So, by the induction 
hypothesis, $\ell_J(w')-1=\ell_J(tw')=\ellt(tw')=\ellt(w')-1$. 
Therefore, $\ell_J(w')=\ellt(w')$ and 
so $\ell_J(w)=\ell_J(w')=\ellt(w')=\ellt(w)$. 
\end{proof}

\noindent{\sc Remark -}  
We observe that  the Corollary is not an obvious consequence of the results proved in \cite{gal};
for example, the proof above requires Theorem \ref{dyer}(c), and not just Theorem \ref{dyer}(a)--(b).
\bigskip

The next results  require  some additional  notation. 
 If $\tti \in \Jti$ and if $t$ and $t' \in J$ and $x$, $x' \in W_I$ 
are such that $\tti=xtx^{-1}=x't'x^{\prime -1}$, then 
\begin{equation}\label{equal}
t=t'.
\end{equation}
Indeed, in this case, then $t' \in W_{I \cup \{t\}} \cap W_J=<t>$. 
Therefore, if $\tti \in \Jti$, we can define $\nu(\tti)$ as 
the unique element of $J$ which is conjugate to $\tti$ under $W_I$. 

\begin{corollary}\label{conj}
If $\tti$, $\tti' \in \Jti$ are $\Wti$-conjugate, then 
$\nu(\tti)$ and $\nu(\tti')$ are $W$-conjugate.
\end{corollary}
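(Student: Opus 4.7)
The plan is to unwind the definitions of $\Jti$ and $\nu$ and then invoke transitivity of $W$-conjugacy. By the very definition of $\Jti$, any $\tti \in \Jti$ can be written as $\tti = x\, \nu(\tti)\, x^{-1}$ for some $x \in W_I$, with $\nu(\tti) \in J$ (and this $\nu(\tti)$ is well-defined by (\ref{equal})). Similarly write $\tti' = x' \nu(\tti') x^{\prime -1}$ with $x' \in W_I$. In particular, $\tti$ is $W$-conjugate to $\nu(\tti)$, and $\tti'$ is $W$-conjugate to $\nu(\tti')$.

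Next, since $\Wti \subseteq W$, the hypothesis that $\tti$ and $\tti'$ are $\Wti$-conjugate immediately implies that they are $W$-conjugate. Combining the three $W$-conjugacies above, we obtain that $\nu(\tti)$ and $\nu(\tti')$ are $W$-conjugate, which is the desired conclusion.

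There is no serious obstacle here: the corollary is essentially a one-line observation once the well-definedness of $\nu$ (which rests on (\ref{equal})) and the inclusion $\Wti \subseteq W$ are noted. All of the substantive content has already been absorbed into Theorem \ref{dyer} and the definition of $\nu$. The one thing to be careful about is simply to state clearly that we are not claiming $\nu(\tti)$ and $\nu(\tti')$ are $W_I$-conjugate (which would be false in general), only $W$-conjugate.
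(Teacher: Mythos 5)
Your proof is correct, and the paper in fact states this corollary without proof precisely because the argument is the immediate one you give: $\tti$ is $W$-conjugate to $\nu(\tti)$ via an element of $W_I$, likewise for $\tti'$, and $\Wti$-conjugacy implies $W$-conjugacy, so transitivity finishes it.
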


\bigskip

\noindent{\sc Remark -}  
Recall that an isomorphism of Coxeter systems 
$(W_1,S_1)\rightarrow (W_2,S_2)$ is a group isomorphism 
$W_1\rightarrow W_2$ inducing a bijection $S_1\rightarrow S_2$.
In  the semidirect product decomposition $W=\Wti \rtimes W_I$ of Theorem 1,
it is clear that the induced action by conjugation of $W_I$ on $\Wti $ is by 
automorphisms of the Coxeter system $(\Wti ,\Jti)$. Moreover, the set 
of Coxeter generators $S$ of $W$ is the disjoint union of the set $I$ 
of Coxeter generators of $W_I$ and the set $J$ of $W_I$-orbit 
representatives on $\Jti$.

 \bigskip

In order to parametrize $\Jti$, we must first determine the centralizer 
of $t \in J$ in $W_I$.  If $s \in S$, we set 
$s^\perp=\{r\in S~|~sr=rs\}$.

\medskip

\begin{lemma}\label{centralisateur}
Let $t \in J$. Then $C_{W_I}(t)=W_{I \cap t^\perp}$.
\end{lemma}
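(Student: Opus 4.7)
The plan is to prove the nontrivial inclusion $C_{W_I}(t) \subseteq W_{I \cap t^\perp}$ by induction on $\ell(w)$, the reverse inclusion and the base case $w=1$ being immediate. Take $w \in C_{W_I}(t)$ with $\ell(w) = n \ge 1$ and a reduced expression $w = s_1 \cdots s_n$, $s_i \in I$. The key idea is to analyze $u := wt = tw$: since $t \in \Wti$, both $w$ and $ws_n$ are the minimal length representatives of their respective cosets modulo $\Wti$ (Theorem \ref{dyer}(d)), so $\ell(u) = n+1$ and $\ell(us_n) = n$. Thus $s_1 \cdots s_n t$ and $t s_1 \cdots s_n$ are two reduced expressions of $u$, and both $s_n$ and $t$ lie in its right descent set.

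By the standard fact that two right descents of an element of $W$ force (via a parabolic factorization argument which also yields finiteness of their Coxeter label) a reduced expression ending with the longest element $w_0(s_n,t)$ of the dihedral parabolic $\langle s_n, t \rangle$, one obtains such a reduced expression of $u$. The crux is now a counting argument: the number of letters from each $W$-conjugacy class of simple reflections appearing in a reduced expression is an invariant of the underlying element, since a braid move $(ab)^{m_{ab}/2} \leftrightarrow (ba)^{m_{ab}/2}$ preserves individual letter counts when $m_{ab}$ is even and exchanges counts of two $W$-conjugate generators when $m_{ab}$ is odd. Applied to $u$: because $s_1\cdots s_n t$ contains exactly one letter from the $W$-conjugacy class of $t$, every reduced expression of $u$ does; but by (\ref{even}), $m := m_{s_n, t}$ is even, and $w_0(s_n, t) = (s_n t)^{m/2}$ contributes $m/2$ such letters as a suffix. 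Therefore $m/2 \le 1$, forcing $m = 2$ and $s_n \in I \cap t^\perp$.

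The element $w' := ws_n$ then lies in $C_{W_I}(t)$ (since $s_n$ commutes with $t$) and has length $n - 1$; induction gives $w' \in W_{I \cap t^\perp}$, and hence $w = w's_n \in W_{I \cap t^\perp}$, completing the proof. The only step warranting care is the invariance of $W$-conjugacy class counts in reduced expressions, a standard consequence of Matsumoto's theorem which is worth making explicit but does not constitute a real obstacle; the rest is elementary book-keeping built on the descent machinery and the even-label hypothesis already exploited throughout the section.
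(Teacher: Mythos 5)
Your proof is correct, and it shares the paper's key insight: a reduced expression of $wt=tw$ contains exactly one letter from the $W$-conjugacy class of $t$ (the invariance of $\ell_J$), while by \eqref{even} any braid relation between $t$ and an element of $I$ has even order and therefore needs at least two occurrences of $t$ to be applied. But the machinery around that insight is genuinely different. The paper argues in one shot: $s_1\cdots s_r t$ and $ts_1\cdots s_r$ are two reduced expressions of $wt$, Matsumoto's theorem connects them by braid moves, and the unique occurrence of $t$ can only travel by commutations, so $t$ commutes with \emph{every} $s_i$ simultaneously and $w\in W_{I\cap t^\perp}$ at once. You instead peel off one generator at a time: a descent-set analysis shows $s_n$ and $t$ are both right descents of $u=wt$, the parabolic factorization $u=u'\,w_0(s_n,t)$ combined with $\ell_J(u)=1$ forces $m_{s_n,t}=2$, and induction on $\ell(w)$ finishes. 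Your route is longer and invokes an extra (standard but unstated in the paper) fact about two right descents forcing a reduced expression ending in the longest element of a finite dihedral parabolic; in exchange, it makes fully explicit the counting step that the paper compresses into the phrase that $t$ ``must commute with all the $s_i$''. One minor repair: coset minimality of $ws_n$ only gives $\ell(us_n)>n-1$, which together with $\ell(us_n)\in\{n,n+2\}$ does not yet pin down $\ell(us_n)=n$; the clean justification is simply that $us_n=ts_1\cdots s_{n-1}$ is a word of length $n$, so $\ell(us_n)\le n<\ell(u)$. With that one-line fix the argument is complete.
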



\begin{proof}
First it is clear that $W_{I \cap t^\perp} \subseteq C_{W_I}(t)$. Conversely, 
let $w \in W_I$ be such that $wt=tw$. Let $w=s_1\cdots s_r$ be a reduced 
expression of $w$ (so that $s_i \in I$). Then, $s_1\cdots s_r t$ and 
$ts_1\cdots s_r$ are reduced expression of the same element $wt=tw$ of $W$. 
By Matsumoto's lemma, this means that one can obtain one of these 
reduced expression by applying only braid relations. But $t$ occurs only once 
in both reduced expressions: this means that, in order to make $t$ pass from the first 
position to the last position, $t$ must commute with all the $s_i$. So 
$w \in W_{I \cap t^\perp}$. 
\end{proof}

\bigskip

We set
$$\JC=\{(x,t)~|~t \in J\text{~and~}x \in X_{I \cap t^\perp}^I\}.$$
Then it follows from \eqref{equal} and Lemma \ref{centralisateur} 
that the map
\begin{equation} \label{j tilde}
\parmap\colon \fonctio{\JC}{\Jti}{(x,t)}{xtx^{-1}}
\end{equation}
is bijective. 

\bigskip

\begin{proposition}\label{red exp}
Let $(x,t) \in \JC$. Then:
\begin{itemize}
\itemth{a} For $w\in W_I$, one has $w\parmap(x,t)w^{-1}=\parmap(x',t)$ 
where $x'$ is the unique element of $X_{I\cap t^\perp}^I$ with 
$x'W_{I\cap t^\perp}=wxW_{I\cap t^\perp}$.

\itemth{b} The palindromic reduced expressions of $xtx^{-1}$ in $(W,S)$ 
are precisely the expressions $t_n\cdots t_1t_0t_1\cdots t_n$  
such that $t_n\cdots t_1$ is a reduced expression for $x$ in $(W_I,I)$ 
and $t_0=t$.
\end{itemize}
\end{proposition}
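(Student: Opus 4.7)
For part (a), a direct computation suffices. Since $\parmap(x,t)=xtx^{-1}$, we have $w\parmap(x,t)w^{-1}=(wx)t(wx)^{-1}$. I would factor $wx=x'c$ with $x'\in X_{I\cap t^\perp}^I$ the unique minimal length representative of the coset $wxW_{I\cap t^\perp}$ and $c\in W_{I\cap t^\perp}$. By Lemma \ref{centralisateur}, $c\in C_{W_I}(t)$, so $(wx)t(wx)^{-1}=x'(ctc^{-1})x'^{-1}=x'tx'^{-1}=\parmap(x',t)$, yielding (a).

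The heart of part (b) is the length formula $\ell(xtx^{-1})=2\ell(x)+1$. My plan is to dissect an arbitrary reduced expression $s_1\cdots s_N$ of $xtx^{-1}$ in $(W,S)$. Corollary \ref{longueurs ajoutees} (together with the invariance of $\ellt$ under $W_I$-conjugation) gives $\ell_J(xtx^{-1})=\ellt(xtx^{-1})=\ellt(t)=1$, so exactly one letter $s_{k+1}=r$ lies in $J$, and the prefix $a:=s_1\cdots s_k$ and suffix $b:=s_{k+2}\cdots s_N$ are reduced expressions in $(W_I,I)$ of elements $a,b\in W_I$.

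Applying the retraction $\ph\colon W\to W_I$ of Theorem \ref{dyer}(a) to $xtx^{-1}=arb$ (noting $\ph(r)=1$ and $\ph(xtx^{-1})=1$) gives $ab=1$, so $b=a^{-1}$ and $N=2k+1$. Rewriting $ara^{-1}=xtx^{-1}$ then shows that $a^{-1}x\in W_I$ conjugates $t$ to $r$; by \eqref{equal} and Lemma \ref{centralisateur} this forces $r=t$ and $a\in xW_{I\cap t^\perp}$. Since $x\in X_{I\cap t^\perp}^I$ is minimal in its coset, the standard identity $\ell(xw')=\ell(x)+\ell(w')$ for $w'\in W_{I\cap t^\perp}$ gives $k=\ell(a)\geq\ell(x)$, with equality iff $a=x$. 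Combined with the trivial upper bound $\ell(xtx^{-1})\leq 2\ell(x)+1$, this forces $\ell(xtx^{-1})=2\ell(x)+1$ and $a=x$ in every reduced expression.

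To finish (b), observe that the palindromic condition $s_i=s_{N+1-i}$ is equivalent to $b_1\cdots b_k$ being the reverse word of $a_1\cdots a_k$; since $a=x$, the prefix $a_1\cdots a_k$ is exactly a reduced expression of $x$ in $(W_I,I)$. Conversely, any reduced expression $t_n\cdots t_1$ of $x$ in $(W_I,I)$ produces a word $t_n\cdots t_1\,t\,t_1\cdots t_n$ of length $2n+1=\ell(xtx^{-1})$ representing $xtx^{-1}$, which is therefore a reduced palindromic expression. The main obstacle is establishing the length identity; the palindromic description then follows by bookkeeping with the unique $J$-letter and the minimality of $x$ in its $W_{I\cap t^\perp}$-coset.
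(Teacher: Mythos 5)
Your argument is correct; part (a) matches the paper (which simply declares it immediate from the definitions), but your part (b) follows a genuinely different route. The paper's key tool is Lemma \ref{palind} (palindromization of reduced expressions of reflections): from an arbitrary reduced expression $s_1\cdots s_{2m+1}$ of $xtx^{-1}$ it forms the reduced palindrome $s_1\cdots s_m s_{m+1}s_m\cdots s_1$, deduces $s_{m+1}=t$ because all letters lie in $I\cup\set{t}$ and no element of $I$ is conjugate to $t$, extracts a reduced subexpression of $t_n\cdots t_1t_0t_1\cdots t_n$ to force $m=n$ via the minimality of $x$ in $xW_{I\cap t^\perp}$, and finishes with a braid-move (Matsumoto) argument: the central $t$ can never participate in a braid move. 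You instead reach the structural statement directly: Corollary \ref{longueurs ajoutees} gives $\ell_J(xtx^{-1})=\ellt(xtx^{-1})=\ellt(t)=1$, so every reduced expression has a unique $J$-letter; the retraction $\ph$ identifies the suffix as the inverse of the prefix; and \eqref{equal}, Lemma \ref{centralisateur} and the distinguished-coset property of $x$ force the unique $J$-letter to be $t$ and the prefix to be a reduced expression of $x$ itself (the inequalities $2\ell(a)+1=N\le 2\ell(x)+1\le 2\ell(a)+1$ pin everything down). This yields the slightly stronger conclusion that \emph{every} reduced expression of $xtx^{-1}$, palindromic or not, is of the form (reduced word for $x$)$\,t\,$(reduced word for $x^{-1}$), after which the palindromic case is pure bookkeeping, and it avoids both Lemma \ref{palind} and the concluding braid-move step. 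The trade-off is that your route leans on Corollary \ref{longueurs ajoutees}, hence on the full strength of Theorem \ref{dyer}(c) ($\Jti=\chi(\Wti)$) and Lemma \ref{dyer X}(c), whereas the paper's proof of this proposition needs only the conjugacy separation of $I$ and $J$; both sets of prerequisites are available at this point in the paper, so this is a matter of economy rather than correctness.
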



\begin{proof} 
Part (a) is immediate from the definitions. 
For (b), we first recall the following result:

\begin{quotation}
\small{
\begin{lemma}\label{palind}
If $r_1\cdots  r_{2m+1}$ is a reduced expression for a reflection $t\in T$, 
then  $r_1\cdots  r_mr_{m+1}r_m\cdots r_1$ is a palindromic reduced 
expression of $t$.
\end{lemma}

\begin{proof} 
See \cite[(2.7)]{dyer}.
\end{proof}}
\end{quotation}

Write $l(xtx^{-1})=2m+1$. 
We have $xtx^{-1}\in W_{I\cup\set{t}}$, so any reduced expression 
$xtx^{-1}=s_1\cdots s_{2m+1}$ for $xtx^{-1}$ has all $s_i\in I\cup\{t\}$. 
Note $s_1\cdots s_m s_{m+1}s_m\cdots  s_1$ is also a reduced expression for 
$xtx^{-1}$ by Lemma \ref{palind}. Thus,  $t\in J$ is $W$-conjugate to 
$s_{m+1}\in I\cup\set{t}$ and so $s_{m+1}=t$.  
Let $t_n\cdots t_1$ be a reduced expression for $x$, and $t_0=t$. 
Then   $xtx^{-1}=t_n\cdots t_1t_0t_1\cdots t_n$  and the right hand side 
contains some  reduced expression $s_1\cdots  s_{2m+1}$ for $xtx^{-1}$ 
as a subexpression. By the above, we have $s_{m+1} =t=t_0$, which is the 
only occurrence of $t$ in $t_n\cdots t_0\cdots t_n$. Hence 
$s_1\cdots s_m s_{m+1}s_m\cdots  s_1$ is also a  reduced expression 
for $xtx^{-1}$ contained as a subexpression  of $t_n\cdots t_0\cdots t_1$. 
Let $y=s_1\cdots  s_m\in W_I$.  Then $xtx^{-1}= yty^{-1}$ so 
$z:=y^{-1}x\in C_{W_I}(t)=W_{I\cap t^\perp}$. We have $y=xz^{-1}$ 
with $\ell(y)=m=\ell(xz^{-1})=\ell(x)+\ell(z^{-1})=n+\ell(z^{-1})$ and  
$m\le n$, so $m=n$. This shows $t_n\cdots t_0\cdots t_n$ is a 
reduced expression for $xtx^{-1}$.

\medskip

Since every reduced expression for $xtx^{-1}$ has $t$ as its middle 
element,  it follows that this central $t$ can  never be involved 
in a braid  move between reduced expressions for $xtx^{-1}$, and the  
conclusion of (b) is clear.
\end{proof}

\bigskip

Now we introduce notation to describe the Coxeter matrix of $(\Wti,\Jti)$. If $A$, $B$ and $C$ are three subsets 
of $S$ such that $B \subseteq A$ and $C \subseteq A$, we denote by 
$X_{BC}^A$ the set of $x \in W_{A}$ which have minimal length in 
$W_B x W_C$. For simplicity, we set $X_{\vide C}^A=X_C^A$.  We shall use
Deodhar's Lemma \cite[Lemma 2.1.2]{geck}, which  amounts to to the statement  that if $w\in W_{C}^{A}$
and $s\in A$ with $sw\not\in W_{C}^{A}$ then
$\ell(sw)>\ell(w)$ and $sw=wr$ for some $r\in C$.

\medskip

Now, let $\sti$, $\tti \in \Jti$ and let $s=\nu(\sti)$ and $t=\nu(\tti)$. 
Then there exists $x$ and $y \in W_I$ such that $\sti=xsx^{-1}$ and 
$\tti=yty^{-1}$. We denote by $f(\sti,\tti)$ the unique element of 
$X_{I \cap s^\perp,I \cap t^\perp}^I$ such that 
$x^{-1}y \in W_{I \cap s^\perp} f(\sti,\tti) W_{I \cap t^\perp}$. 
It is readily seen that $f(\sti,\tti)$ depends only on $\sti$ and $\tti$ 
and not on the choice of $x$ and $y$. Note that $\sti=\tti$ if and only 
if $s=t$ and $f(\sti,\tti)=1$. Recall that if $s$ and $t$ are two elements of $T$, $m_{s,t}$ denotes the order of $st$. We then set:
$$\mti_{\sti,\tti}=\begin{cases}
1 & \text{if $\sti=\tti$,}\\
m_{s,u}/2 & \text{if $s=t$ and $f(\sti,\tti)=u \in I$,}\\
\infty & \text{if $s=t$ and $\ell(f(\sti,\tti)) \ge 2$,}\\
m_{s,t} & \text{if $s \neq t$ and $f(\sti,\tti)=1$,}\\
\infty & \text{if $s \neq t$ and $f(\sti,\tti)\neq 1$.}\\
\end{cases}$$
We denote by $\Mti$ the matrix $(\mti_{\sti,\tti})_{\sti,\tti \in \Jti}$.

Since $f(\sti,\tti)=f(\tti,\sti)^{-1}$ and since $m_{su}$ is even 
if $s \in J$ and $u \in I$ by (\ref{even}), we have, for all $\sti$, 
$\tti \in \Jti$ and $x \in W_I$,
\begin{equation}\label{symetrie}
\begin{cases}
\mti_{\sti,\tti} \in \ZM_{\geqslant 1},~&\\
\mti_{\sti,\tti}=\mti_{\tti,\sti},~&\\
\mti_{x\sti x^{-1},x\tti x^{-1}}=\mti_{\sti,\tti},~\\ 
\mti_{\sti,\sti}=1,&\\
\mti_{\sti,\tti} \geqslant 2 ~(\text{if $\sti \neq \tti$}).\\
\end{cases}
\end{equation}
The last inequality follows from the fact that, if $f(\sti,\tti)=u \in I$, 
then $us\neq su$.

\bigskip
The following is proved by a simple algebraic  argument,
 independent of the proof of Theorem \ref{dyer}, in \cite{gal}.
 For a different proof using root systems, see Theorem \ref{alternative}.
\medskip
\begin{theorem}[Gal]\label{gal2}
For $\sti,\tti\in \Jti$, the product $\sti\tti$ in $W$ has order $\mti_{\sti,\tti}$ i.e.
the matrix $\Mti$ defined above is   the Coxeter matrix of $(\Wti,\Jti)$.
\end{theorem}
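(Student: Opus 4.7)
The plan is to use the $W_I$-conjugation action on $\Jti$ to reduce the computation of the order of $\sti\tti$ in $W$ to a canonical form, and then verify the answer case-by-case according to the definition of $\mti_{\sti,\tti}$.

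First I would write $\sti=xsx^{-1}$ and $\tti=yty^{-1}$ with $s,t\in J$ and $x,y\in W_I$. Since order is invariant under conjugation, the order of $\sti\tti$ equals that of $s\cdot z\,t\,z^{-1}$ where $z:=x^{-1}y\in W_I$. Applying Deodhar's lemma, decompose $z=afb$ with $a\in W_{I\cap s^\perp}$, $f=f(\sti,\tti)\in X^I_{I\cap s^\perp,I\cap t^\perp}$ and $b\in W_{I\cap t^\perp}$. Using the commutations $sa=as$ and $btb^{-1}=t$ one obtains
\[ s\cdot ztz^{-1}=a\cdot sftf^{-1}\cdot a^{-1}, \]
so the theorem reduces to computing the order of $sftf^{-1}$ in $W$.

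The three finite cases in the definition of $\mti_{\sti,\tti}$ are then direct. If $\sti=\tti$, then $f=1$ and $s=t$, giving $sftf^{-1}=1$. If $s\ne t$ and $f=1$, then $sftf^{-1}=st$, of order $m_{s,t}$ by definition of the Coxeter matrix of $(W,S)$. If $s=t$ and $f=u\in I$, then $sftf^{-1}=(su)^2$; the fact that $u\ne 1$ is a nontrivial double-coset representative forces $su\ne us$, and (\ref{even}) then forces $m_{s,u}$ to be even with $m_{s,u}\ge 4$, so $(su)^2$ has order $m_{s,u}/2$.

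The main obstacle will be the two infinite cases ($s=t$ with $\ell(f)\ge 2$, and $s\ne t$ with $f\ne 1$): I must show $sftf^{-1}$ has infinite order. Here I would pass to the canonical symmetric bilinear form $B$ on the standard geometric representation of $W$, and invoke the well-known criterion that the order of a product of two reflections $r_\alpha r_\beta$ (along unit positive roots) is finite and equal to $m$ iff $B(\alpha,\beta)=-\cos(\pi/m)$, and is infinite iff $B(\alpha,\beta)\le -1$. Applied with $\alpha=\alpha_s$ and $\beta=f\alpha_t$, the task reduces to showing $B(\alpha_s,f\alpha_t)\le -1$. In the simplest sub-case $f=u\in I$ with $s\ne t$, expanding $u\alpha_t=\alpha_t-2B(\alpha_u,\alpha_t)\alpha_u$ gives
\[ B(\alpha_s,u\alpha_t)=-\cos(\pi/m_{s,t})-2\cos(\pi/m_{s,u})\cos(\pi/m_{u,t}), \]
and the double-coset condition together with (\ref{even}) forces $m_{s,u},m_{u,t}\ge 4$, so both cosines are $\ge \sqrt{2}/2$ and the bound $B\le -1$ is immediate. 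For longer $f$ I would proceed either by induction on $\ell(f)$, tracking how the coefficients in the expansion of $f\alpha_t$ in the root basis grow, or by reducing to the parabolic subgroup $W_{\supp(f)\cup\{s,t\}}$ where a finite-rank analysis is tractable. This root-system analysis is the technical heart of the argument, and a uniform treatment of all such cases is in fact precisely what is carried out in Theorem \ref{alternative} of the present paper.
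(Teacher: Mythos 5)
Your strategy is exactly the one the paper follows in Theorem \ref{alternative} (the paper gives no self-contained argument at the statement of Theorem \ref{gal2} itself; it cites \cite{gal} and defers to the root-system proof). The reduction to $sftf^{-1}$ via the double coset decomposition $z=afb$ and the centralizer descriptions $C_{W_I}(s)=W_{I\cap s^\perp}$, $C_{W_I}(t)=W_{I\cap t^\perp}$ is correct, and your treatment of the three finite cases and of the sub-case $f=u\in I$, $s\neq t$ (including the observation that minimality of $u$ in its double coset forces $m_{s,u},m_{u,t}\ge 4$) is sound and matches the paper's computation of $\mpair{\a,w(\b)}$.

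However, the two remaining infinite cases are the technical heart of the theorem, and your proposal leaves them as a sketch that omits the two ingredients the paper actually needs. First, writing $f(\a_t)=\a_t+\sum_{\g\in\G}c_\g\g$ with $\G\subseteq\Pi_I$, growth of the coefficients $c_\g$ alone proves nothing: if every $\g\in\G$ were orthogonal to $\a_s$, all cross terms would vanish. The paper's Lemma \ref{orthogonal} rules this out, and its proof is not a coefficient estimate at all — it combines the connectedness of supports (Lemma \ref{support}), Deodhar's lemma, and the minimality of $f$ in $W_{I\cap s^\perp}fW_{I\cap t^\perp}$ to show $I_\G\nsubseteq s^\perp$. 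Your proposal never addresses this. Second, in the case $s=t$ with $\ell(f)\ge 2$ the expansion of $\mpair{\a_s,f(\a_s)}$ starts from $\mpair{\a_s,\a_s}=+1$ rather than from a nonpositive term, so a single nonorthogonal $\g$ with $c_\g\ge\sqrt2$ only yields $\mpair{\a_s,f(\a_s)}\le 0$, not $\le -1$. The paper needs the finer dichotomy of Lemma \ref{racine de 2}(b) (either $c_\g=-\mpair{\b,\g^\vee}$ or $c_\g\ge 2\sqrt2$) together with a second support-plus-Deodhar argument to derive a contradiction with $\ell(f)\ge 2$ in the borderline case. Since you explicitly defer this analysis to Theorem \ref{alternative} rather than supplying it, the proposal as written does not constitute a proof of the infinite-order cases.
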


\bigskip

\begin{corollary}\label{j}
Let $L$ be a subset of $\Jti$ such that the Coxeter graph 
of $(\Wti_L,L)$ is an irreducible component of the one of $(\Wti,\Jti)$. 
Then $J=\{\nu(\tti)~|~\tti \in L\}$. 
\end{corollary}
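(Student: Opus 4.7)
The plan is to prove $\nu(L)=J$ by showing that the conjugation action of $W_I$ on the set of irreducible components of $(\Wti,\Jti)$ is transitive. By equation~(\ref{symetrie}), conjugation by $W_I$ preserves $\Mti$ and hence permutes the components, and since $W_I$ acts transitively on each fibre $\nu^{-1}(s)$ of $\nu\colon\Jti\to J$, transitivity of the $W_I$-action on components is equivalent to $\nu(L)=J$ for every component $L$.

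I would then argue transitivity by contradiction. Assume the components split into at least two $W_I$-orbits, and let $V_1$ and $V_2$ be $W_I$-stable unions of components coming from two distinct orbits; then $J_1=\nu(V_1)$ and $J_2=\nu(V_2)$ are disjoint nonempty subsets of $J$, with each $V_i$ the disjoint union of the fibres $\nu^{-1}(s)$ for $s\in J_i$. The requirement $\mti_{\sti,\tti}=2$ for all $\sti\in V_1$, $\tti\in V_2$, combined with the explicit formula of Theorem~\ref{gal2}, rules out the ``$\infty$-cases'' and forces $m_{s,t}=2$ for all $s\in J_1$, $t\in J_2$ (by taking $\sti=s,\tti=t$), and moreover $W_{I\cap s^\perp}W_{I\cap t^\perp}=W_I$ for all such pairs (by letting the conjugators range over $W_I$).

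From this identity and the single-generator observation used in the proof of Proposition~\ref{red exp}(b) (any element of $I$ lying in $W_AW_B$ belongs to $A\cup B$), one obtains a partition $I=I_1\sqcup I_2\sqcup I_0$, where $I_i$ consists of the elements of $I$ with at least one noncommuting neighbor in $J_i$ and $I_0$ commutes with every element of $J$; in particular $I_1\cap I_2=\emptyset$ and no edge of the Coxeter graph of $(W,S)$ joins $I_1\cup J_1$ directly to $I_2\cup J_2$. Applying the same factorization analysis to length-$2$ and length-$3$ words in $W_I$ rules out any $I_0$-bridge between $I_1$ and $I_2$ in the Coxeter graph of $(W,S)$, forcing $(W,S)$ to be reducible and contradicting the (implicit) irreducibility of $(W,S)$.

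The main obstacle will be this last step: controlling factorizations of longer elements of $W_I$ to exclude $I_0$-bridges, which becomes combinatorially delicate. A cleaner alternative route would be geometric: the roots associated with $V_i$ span a $W_I$-invariant subspace $U_i$ of the standard geometric representation of $(W,S)$, the subspaces $U_1$ and $U_2$ are mutually $B$-orthogonal, and each $\Wti_{V_j}$ fixes $U_i$ pointwise for $j\neq i$, so $W$ preserves the decomposition into $U_1$, $U_2$ and their orthogonal complement, contradicting the irreducibility of the geometric representation when $(W,S)$ is irreducible.
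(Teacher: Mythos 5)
Your reduction to transitivity of the $W_I$-action on the components is sound (it is exactly the content of Corollary \ref{transitif}, which the paper deduces \emph{from} this corollary; you run the implication the other way), and the consequences you extract from Theorem \ref{gal2} are correct: for $s\in J_1$ and $t\in J_2$ one gets both $m_{s,t}=2$ and $W_I=W_{I\cap s^\perp}W_{I\cap t^\perp}$. But as written the proof is incomplete at precisely the point you flag: you only control length-$2$ and length-$3$ words and leave longer ``$I_0$-bridges'' open. Two further problems. First, your sets $I_0,I_1,I_2$ need not partition $I$ when there are more than two $W_I$-orbits of components (an element of $I$ may commute with everything in $J_1\cup J_2$ yet fail to commute with some third part of $J$); this is repaired by taking $V_2$ to be the union of \emph{all} components outside the orbit of $L$, so that $J=J_1\dotcup J_2$ and a shortest path from $J_1$ to $J_2$ has all interior vertices in $I$. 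Second, the geometric fallback is unsound for infinite $W$: the reflection representation of an irreducible infinite Coxeter system need not be irreducible (it is not for affine Weyl groups), so ``contradicting irreducibility of the geometric representation'' proves nothing there.

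The step you call combinatorially delicate is in fact the one-line heart of the paper's own proof, and it is uniform in the bridge length. Take a shortest path $s=v_0-v_1-\cdots-v_m=t$ in the Coxeter graph from $J_1$ to $J_2$; after the fix above, $y=v_1\cdots v_{m-1}\in W_I$ is a path word and hence has a \emph{unique} reduced expression. Any element of $W_{I\cap s^\perp}W_{I\cap t^\perp}$ admits a length-additive factorization, i.e.\ a reduced expression which is a word in $I\cap s^\perp$ followed by a word in $I\cap t^\perp$; uniqueness then forces $v_1\in s^\perp$ or $v_{m-1}\in t^\perp$, contradicting that these are edges of the path. Hence $y\notin W_{I\cap s^\perp}W_{I\cap t^\perp}$, contradicting $W_I=W_{I\cap s^\perp}W_{I\cap t^\perp}$, so no such path exists and the Coxeter graph of $(W,S)$ is disconnected, against the (implicitly assumed) irreducibility. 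The paper runs this same computation in the contrapositive direction: it shows by induction on path length that $\nu(L)$ absorbs every $t\in J$ joined by a path to some $s\in\nu(L)$, using exactly this unique-reduced-expression argument to conclude $\mti_{\sti,\tti}=\infty$ and hence that the relevant conjugates lie in the same component. With the bridge step closed as above (and $V_2$ redefined), your argument becomes a correct, essentially equivalent reorganization of the paper's proof rather than a genuinely different one.
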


\begin{proof}
Let $K=\{\nu(\tti)~|~\tti \in L\} \subseteq J$. Then $K$ is not empty. 
We shall prove by induction on $n$ the following assertion 
(from which the Corollary follows easily):
\begin{quotation}
\begin{itemize}
\item[$(P_n)$] {\it Let $s \in K$ and let $t \in J$ be such that 
there exists a path of length $n$ from $s$ to $t$ in the Coxeter graph 
of $(W,S)$. Then $t \in K$.}
\end{itemize}
\end{quotation}

It is clear that $(P_0)$ holds. Let us show $(P_1)$. So let $s \in K$ 
and $t \in J$ be such that $m_{s,t} \ge 3$. Then there exists 
$x \in W_I$ such that $xsx^{-1} \in L$. Since 
$m_{xsx^{-1},xtx^{-1}} = m_{s,t} \ge 3$ and $xtx^{-1} \in \Jti$, we get 
that $xtx^{-1} \in L$ and so $t=\nu(xtx^{-1}) \in K$, as expected. 

Now let $n \ge 2$ and assume that 
$(P_0)$, $(P_1)$,\dots, $(P_{n-1})$ hold. Let $s \in K$ and let $t \in J$ 
be such that there exists a path of length $n$ from $s$ to $t$ 
in the Coxeter graph of $(W,S)$. 
Set $s_0=s$ and $s_n=t$ and 
let $s_1$,\dots, $s_{n-1}$ be elements of $S$ such that 
$m_{s_{i-1},s_i} \ge 3$ for all $i \in \{1,2,\dots,n\}$. 
We may assume that $s_{i} \neq s_{j}$ if $i\neq j$. 
If there exists $i \in \{1,2,\dots,n-1\}$ such that $s_i \in J$, 
then the induction hypothesis (applied twice) 
implies that $s_i \in K$ and that $s_n=t \in K$. 
So we may assume that $s_i \in I$ for all $i \in \{1,2,\dots,n-1\}$. 

Let $x \in W_I$ be such that $\sti=xsx^{-1} \in L$. 
Let $y = s_1 s_2 \cdots s_{n-1} \in W_I$ and let $\tti=xyty^{-1}x^{-1}$. 
Note that $\ell(y)=n-1$. 
Since $s \neq t$, we get that 
$m_{\sti,\tti}=\infty$ if 
$y \not\in W_{I\cap s^\perp} \cdot W_{I \cap t^\perp}$. So it remains 
to show that $y \not\in W_{I\cap s^\perp} \cdot W_{I \cap t^\perp}$. 
But, if $y \in W_{I\cap s^\perp} \cdot W_{I \cap t^\perp}$, this would 
imply that $y$ has a reduced expression of the form 
$y=\s_1\s_2\cdots \s_{n-1}$ such that there exists $k \in \{0,1,2,\dots,n-1\}$ 
satisfying $\s_1$,\dots, $\s_k \in I \cap s^\perp$ and 
$\s_{k+1},\dots,\s_{n-1} \in I \cap t^\perp$. 
Since $y$ has only one reduced expression, this means 
that $s_1 \in I \cap s^\perp$ or $s_{n-1} \in I \cap t^\perp$. 
This is impossible, and so the proof of $(P_n)$ is complete.
\end{proof}

\begin{corollary}\label{transitif}
Assume that $(W,S)$ is irreducible. Then $W_I$ permutes transitively 
the irreducible components of $(\Wti,\Jti)$.
\end{corollary}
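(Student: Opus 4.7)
The plan is to deduce transitivity directly from Corollary \ref{j} together with the fact (noted in the remark after Corollary \ref{conj}) that conjugation by any element of $W_I$ is an automorphism of the Coxeter system $(\Wti,\Jti)$. Since such an automorphism must preserve the Coxeter graph of $(\Wti,\Jti)$, it permutes the irreducible components; so the only thing to verify is transitivity.

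First I would dispose of the trivial case $J=\varnothing$, in which $\Wti=\{1\}$ has no irreducible components and the statement is vacuous. So assume $J\neq\varnothing$ and fix some $t\in J$. Let $L_1,L_2$ be two irreducible components of $(\Wti,\Jti)$. By Corollary \ref{j}, $\nu(L_1)=\nu(L_2)=J$; hence there exist $\tti_1\in L_1$ and $\tti_2\in L_2$ with $\nu(\tti_1)=\nu(\tti_2)=t$. By the definition of $\nu$, this means that there are $x_1,x_2\in W_I$ with $\tti_i=x_itx_i^{-1}$ for $i=1,2$.

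Setting $x=x_2x_1^{-1}\in W_I$, we obtain $x\tti_1x^{-1}=\tti_2$. Since conjugation by $x$ is an automorphism of $(\Wti,\Jti)$, it carries the irreducible component of $\Jti$ containing $\tti_1$ onto the one containing $\tti_2$; that is, $x L_1 x^{-1}=L_2$. This proves transitivity, and since $L_1,L_2$ were arbitrary, the corollary follows.

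The proof requires nothing beyond piecing together Corollary \ref{j} and the automorphism remark, so I do not anticipate any serious obstacle; the only subtle point is remembering that surjectivity of $\nu|_L\colon L\to J$ in Corollary \ref{j} is precisely what fails when $(W,S)$ is reducible, which is the reason irreducibility of $(W,S)$ is the right hypothesis here.
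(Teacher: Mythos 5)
Your proof is correct and follows essentially the same route as the paper: both use Corollary \ref{j} to find elements of the two components mapping under $\nu$ to a common $t\in J$, then conjugate by the resulting element of $W_I$ and invoke the fact that $W_I$ acts by automorphisms of $(\Wti,\Jti)$, hence permutes its irreducible components. The explicit treatment of the vacuous case $J=\varnothing$ is a harmless addition.
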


\begin{proof}
Let $L$ and $L'$ be two subsets of $\Jti$ such that the Coxeter 
graphs of $(\Wti_L,L)$ and $(\Wti_{L'},L')$ are irreducible 
components of $(\Wti,\Jti)$. Let $s \in J$. By Corollary \ref{j}, 
there exist $x$ and $y$ in $W_I$ such that $xsx^{-1} \in L$ 
and $ysy^{-1} \in L'$. So $L \cap \lexp{xy^{-1}}{L'} \neq \vide$. 
Since $W_I$ permutes the irreducible components of the Coxeter 
graph of $(\Wti,\Jti)$, we get that $L=\lexp{xy^{-1}}{L'}$.
\end{proof}

\bigskip

\noindent{\bf Parabolic subgroups, cosets.} 
We close this section by investigating the relationships between 
standard parabolic subgroups of $W$ and $\Wti$, as well as between 
the sets of distinguished cosets representatives. Roughly speaking, 
with respect to these questions, $\Wti$ behaves like a standard 
parabolic subgroup. 

If $L$ is a subset of $\Jti$, we note by $\Wti_L$ the subgroup of $\Wti$ 
generated by $L$. If $K$ is a subset of $S$, we set
$$K^+=\{wtw^{-1}~|~w \in W_{I \cap K}\text{ and }t \in J \cap K\}.$$
It is a subset of $\Jti$. Then

\begin{proposition}\label{inter parabolique}
Let $K$ be a subset of $S$. Then $K^+=W_K \cap \Jti$ and 
$W_K \cap \Wti = \Wti_{K^+}$.
\end{proposition}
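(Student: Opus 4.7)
The plan is to prove the two equalities in order. The first equality $K^+ = W_K \cap \Jti$ gives an immediate handle on generators of $W_K\cap \Wti$ coming from $K^+$, and I will then reduce the second equality to an induction that peels off such generators one at a time.

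For the first equality, the inclusion $K^+\subseteq W_K\cap \Jti$ is transparent from the definitions: if $w\in W_{I\cap K}$ and $t\in J\cap K$, then $wtw^{-1}\in W_K$ and $wtw^{-1}\in \Jti$. For the reverse inclusion, take $\tti\in W_K\cap \Jti$ and write $\tti=xtx^{-1}$ with $x\in W_I$ and $t\in J$. I would invoke Lemma \ref{palind} (applied to any reduced expression of the reflection $\tti$) to produce a palindromic reduced expression of $\tti$, and then Proposition \ref{red exp}(b) to identify it with an expression of the form $t_n\cdots t_1 t_0 t_1\cdots t_n$, where $t_n\cdots t_1$ is reduced for $x$ in $(W_I,I)$ and $t_0=t$. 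Since $\tti\in W_K$, the standard fact that every reduced expression of an element of the parabolic subgroup $W_K$ uses only letters from $K$ forces $t_0,t_1,\ldots,t_n\in K$. Thus $t\in J\cap K$ and $x=t_n\cdots t_1\in W_{I\cap K}$, so $\tti\in K^+$.

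For the second equality, the inclusion $\Wti_{K^+}\subseteq W_K\cap \Wti$ is clear since $K^+\subseteq W_K\cap \Jti\subseteq W_K\cap \Wti$. For the reverse inclusion, I would argue by induction on $\ell(w)$ for $w\in W_K\cap \Wti$. The base case $w=1$ is trivial. For the induction step, observe that $\Wti\cap W_I=\set{1}$ by Theorem \ref{dyer}(a), so if $w\neq 1$ then $w\not\in W_I$. Pick a reduced expression $w=s_1\cdots s_n$ with each $s_i\in K$ and let $j$ be minimal with $s_j\in J$. Set $\tti:=s_1\cdots s_{j-1}s_j s_{j-1}\cdots s_1$. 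By construction $\tti\in \Jti$ (since $s_1,\ldots,s_{j-1}\in I\cap K$), and $\tti\in W_K$, so by the first part $\tti\in K^+$. Moreover $\tti w=s_1\cdots s_{j-1}s_{j+1}\cdots s_n$ lies in $W_K\cap \Wti$ and has strictly smaller length than $w$, so by induction $\tti w\in \Wti_{K^+}$; hence $w=\tti\cdot(\tti w)\in \Wti_{K^+}$.

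The main point to verify carefully is the first part: one must be sure that the palindromic reduced expression produced from an arbitrary reduced expression of $\tti$ indeed has the shape specified by Proposition \ref{red exp}(b), and that its letters all lie in $K$. Once that is in place, the induction in the second part is a routine application of the reduction used already in the proof of Theorem \ref{dyer}(a).
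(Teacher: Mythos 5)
Your proof is correct, but it takes a genuinely different route from the paper's. The paper proves the \emph{second} equality first and deduces the first from it: it restricts the homomorphism $\ph\colon W\to W_I$ (from the proof of Theorem \ref{dyer}) to $W_K$, applies Theorem \ref{dyer} itself to the Coxeter system $(W_K,K)$ with the partition $K=(I\cap K)\,\dot\cup\,(J\cap K)$ to get $W_K=W_{I\cap K}\ltimes \Wti_{K^+}$, and identifies $\Wti_{K^+}=\Ker\ph_K=\Wti\cap W_K$. You instead prove the first equality directly, via Lemma \ref{palind} and Proposition \ref{red exp}(b) together with the standard fact that reduced expressions of elements of $W_K$ only use letters from $K$, and then establish the second equality by an induction on length that peels off a reflection $s_1\cdots s_{j-1}s_js_{j-1}\cdots s_1\in K^+$, exactly mirroring the argument used inside the proof of Theorem \ref{dyer}(a) to show $W=\Wti W_I$. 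Both work; the paper's version is shorter because it recycles the main theorem wholesale, while yours is more hands-on and makes the combinatorial content of $K^+=W_K\cap\Jti$ explicit rather than leaving it as "follows easily." Two small points to tighten: in the first part, Proposition \ref{red exp}(b) is stated for pairs $(x,t)\in\JC$, so you should first replace your chosen $x$ by the minimal-length representative in $xW_{I\cap t^\perp}$ (which represents the same $\tti$ by the bijection \eqref{j tilde}); and in the second part your element $\tti$ lies in $K^+$ directly from the definition of $K^+$ (since $s_1,\dots,s_{j-1}\in I\cap K$ and $s_j\in J\cap K$), so the induction does not actually depend on the first equality.
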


\begin{proof}
Let $\ph_K : W_K \to W$ denote the restriction to $W_K$ of the morphism 
$\ph : W \to W_I$ defined in the proof of Theorem \ref{dyer}. Then 
$\Wti_{K^+} \subseteq \Ker \ph_K$ and, if $w \in W_{I \cap K}$, we have 
$\ph_K(w)=w$. But, by Theorem \ref{dyer} applied to $W_K$ and 
to the partition $K=(I \cap K) \dotcup (J \cap K)$, we have 
$W_K=W_{I \cap K} \ltimes \Wti_{K^+}$. Therefore, $\Wti_{K^+}=\Ker \ph_K$. 
But, by \ref{kernel}, $\Ker \ph_K = \Wti \cap W_K$. This shows 
the second equality of the proposition. The first one then 
follows easily.
\end{proof}

If $L$ is a subset of $\Jti$, we denote by $\Xti_L$ (respectively 
$X_L$) the set of elements $w$ of $\Wti$ (respectively $W$) which 
have minimal length in $w \Wti_L$. 

\begin{lemma}\label{produit x}
Let $L$ be a subset of $\Jti$. Then the map 
$$\fonctio{W_I \times \Xti_L}{X_L}{(w,x)}{wx}$$
is bijective.
\end{lemma}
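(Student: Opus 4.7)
The plan is to use Theorem \ref{dyer} to reduce everything to the unique semidirect factorization $g=vw$ with $v\in W_I$ and $w\in\Wti$, and to show that $g$ is of minimal length in $g\Wti_L$ if and only if $w$ is of minimal length in $w\Wti_L$.

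The technical step I would establish first is the length additivity formula
\[
\ell(vw)=\ell(v)+\ellt(w)\quad\text{for all }v\in W_I,\ w\in\Wti.
\]
Combining Theorem \ref{dyer}(a) and (d) shows that $W_I$ is exactly the set $X'$ of minimal length representatives for the cosets in $W/\Wti$, and Theorem \ref{dyer}(c) says $\Jti=\chi(\Wti)$. Hence Lemma \ref{dyer X}(b) gives $\ell(vt)>\ell(v)$ for every $v\in W_I$ and every $t\in\Jti$. Fixing $v\in W_I$ and arguing by induction on $\ellt(w)$, I would write $w=w't$ with $t\in\Jti$ and $\ellt(w')=\ellt(w)-1$ and apply Lemma \ref{dyer X}(c) with $x=v$ and $w=w'$ to conclude $\ell(vw't)=\ell(vw')+1$; the inductive hypothesis then yields the formula.

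Granted this formula, the proposition is essentially bookkeeping. For well-definedness, take $(v,x)\in W_I\times\Xti_L$; for any $y\in\Wti_L$,
\[
\ell(vxy)=\ell(v)+\ellt(xy)\ge\ell(v)+\ellt(x)=\ell(vx),
\]
so $vx\in X_L$. Injectivity is immediate: if $v_1x_1=v_2x_2$ with $v_i\in W_I$ and $x_i\in\Xti_L\subseteq\Wti$, the uniqueness of the semidirect decomposition in Theorem \ref{dyer}(a) forces $v_1=v_2$ and $x_1=x_2$. For surjectivity, given $g\in X_L$, Theorem \ref{dyer}(a) provides a unique decomposition $g=vw$ with $v\in W_I$ and $w\in\Wti$; for every $y\in\Wti_L$ the formula gives $\ell(gy)-\ell(g)=\ellt(wy)-\ellt(w)$, so $g\in X_L$ forces $\ellt(wy)\ge\ellt(w)$ for all $y\in\Wti_L$, i.e.\ $w\in\Xti_L$.

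The only nontrivial ingredient is the length additivity formula; everything else is a direct consequence of Theorem \ref{dyer} and Lemma \ref{dyer X}. I do not expect any real obstacle beyond keeping the three length functions $\ell$, $\ellt$ and (implicitly) the length function on cosets straight.
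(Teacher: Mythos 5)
Your overall strategy (reduce to the unique factorization $g=vw$ with $v\in W_I$, $w\in\Wti$, and transfer minimality between $g\Wti_L$ and $w\Wti_L$) is the right one, but the technical step you build it on is false. The claimed additivity $\ell(vw)=\ell(v)+\ellt(w)$ already fails for $v=1$: it would force $\ell(w)=\ellt(w)$ for every $w\in\Wti$, whereas the elements of $\Jti$ are in general non-simple reflections of $(W,S)$. Concretely, in $W=I_2(4)$ with $S=\{s,t\}$, $I=\{s\}$, $J=\{t\}$, the element $sts\in\Jti$ has $\ellt(sts)=1$ but $\ell(sts)=3$. The inductive argument breaks exactly where you pass from Lemma \ref{dyer X}(c) to the equality $\ell(vw't)=\ell(vw')+1$: that lemma only gives the \emph{sign} of the length change, namely $\ell(vw't)>\ell(vw')$ iff $\ellt(w't)>\ellt(w')$, and since $t\in\Jti$ is a reflection rather than an element of $S$, the quantity $\ell(vw't)-\ell(vw')$ can be much larger than $1$ (it is $3$ in the example above, with $v=w'=1$, $t=sts$). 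Consequently the two identities you later derive from the formula, $\ell(vxy)=\ell(v)+\ellt(xy)$ and $\ell(gy)-\ell(g)=\ellt(wy)-\ellt(w)$, are also false in general.

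The repair is short, and it is what the paper does: everywhere you invoke the formula you only need the qualitative equivalence $\ell(vxt)>\ell(vx)\Longleftrightarrow\ellt(xt)>\ellt(x)$ for $t\in L$, which is literally Lemma \ref{dyer X}(c) (applicable because $W_I$ is the set of minimal-length representatives of $W/\Wti$ by Theorem \ref{dyer}(d), as you note), combined with the descent criterion of Lemma \ref{dyer X}(b) for membership in $X_L$ and in $\Xti_L$ (applied to the reflection subgroup $\Wti_L$, whose canonical generators are $L$ because $L\subseteq\Jti=\chi(\Wti)$). With the equalities replaced by these equivalences, your well-definedness and surjectivity arguments go through verbatim, your injectivity argument via the uniqueness in Theorem \ref{dyer}(a) is already correct, and the whole proof collapses to the paper's: the map onto $W/\Wti_L$ is bijective by Theorem \ref{dyer}(a), each coset contains a unique element of $X_L$ by Lemma \ref{dyer X}(a), and $wx\in X_L$ for $x\in\Xti_L$ by Lemma \ref{dyer X}(c). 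So the gap lies in the tool you chose, not in the architecture of the argument.
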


\begin{proof}
First, it follows from Theorem \ref{dyer} (a) that 
the map 
$$\fonctio{W_I \times \Xti_L}{W/\Wti_L}{(w,x)}{wx\Wti_L}$$
is bijective. So it remains to show that, if $w \in W_I$ and 
$x \in \Xti_L$, then $wx \in X_L$. But this follows from 
Lemma \ref{dyer X} (c).
\end{proof}

We conclude by an easy result on double coset representatives:

\begin{proposition}\label{double}
Let $K$ be a subset of $S$. Then the map 
$$\fonctio{X_{I \cap K}^I}{\Wti\backslash W / W_K}{d}{\Wti d W_K}$$
is bijective. Moreover, if $d \in X_{I \cap K}^I$, then 
$d$ is the unique element of minimal length in $\Wti d W_K=d \Wti W_K$ and 
$$\Wti \cap \lexp{d}{W_K}=\Wti_{\Jti \cap \lexp{d}{W_K}}.$$
\end{proposition}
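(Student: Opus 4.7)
The plan is to exploit the semidirect product decomposition $W=\Wti\rtimes W_I$ of Theorem \ref{dyer}, applied both to $W$ and to its standard parabolic subgroup $W_K$. The crucial preliminary observation is that Theorem \ref{dyer}, applied to $(W_K,K)$ with the induced partition $K=(I\cap K)\cup(J\cap K)$ (a valid application, since $W_K$-conjugate simple reflections are a fortiori $W$-conjugate), gives $W_K=\Wti_{K^+}\rtimes W_{I\cap K}$; as $\Wti_{K^+}\subseteq \Wti$, this immediately yields the identity $\Wti W_K=\Wti W_{I\cap K}$, which will drive the whole argument.

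For the bijectivity, normality of $\Wti$ first gives $\Wti d W_K=d\Wti W_K$. For surjectivity, I would write an arbitrary $g\in W$ as $g=\wti a$ with $\wti\in \Wti$, $a\in W_I$ (Theorem \ref{dyer}(a)), and replace $a$ by the unique element $d\in X_{I\cap K}^I$ of its $W_{I\cap K}$-coset, so that $\Wti g W_K=\Wti d W_K$. For injectivity, if $\Wti d W_K=\Wti d'W_K$ with $d,d'\in X_{I\cap K}^I$, then $d^{-1}d'\in \Wti W_K=\Wti W_{I\cap K}$; uniqueness of the decomposition $W=\Wti\cdot W_I$ then forces $d^{-1}d'\in W_{I\cap K}$, so $d=d'$ by the minimality property of $X_{I\cap K}^I$.

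For the minimal length claim, rewrite $\Wti d W_K=d\Wti W_{I\cap K}=\bigcup_{u\in W_{I\cap K}}du\,\Wti$. By Theorem \ref{dyer}(d), each right coset $du\,\Wti$ has unique minimal length element $du$ (since $du\in W_I$), of length $\ell(d)+\ell(u)$ by the distinguished property of $d\in X_{I\cap K}^I$. The minimum over $u$ is thus attained uniquely at $u=1$, giving $d$ itself.

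Finally, for $\Wti\cap \lexp{d}{W_K}=\Wti_{\Jti\cap \lexp{d}{W_K}}$, I would conjugate the decomposition of $W_K$ by $d\in W_I$ to obtain $\lexp{d}{W_K}=(d\Wti_{K^+}d^{-1})\rtimes(dW_{I\cap K}d^{-1})$. Since $d\Wti_{K^+}d^{-1}\subseteq \Wti$ (by normality) and $dW_{I\cap K}d^{-1}\subseteq W_I$, uniqueness in $W=\Wti\rtimes W_I$ yields $\Wti\cap \lexp{d}{W_K}=d\Wti_{K^+}d^{-1}=\Wti_{dK^+d^{-1}}$. Because $\Jti$ is $W_I$-stable, $dK^+d^{-1}\subseteq \Jti\cap \lexp{d}{W_K}$, and the sandwich $\Wti_{dK^+d^{-1}}\subseteq \Wti_{\Jti\cap \lexp{d}{W_K}}\subseteq \Wti\cap \lexp{d}{W_K}=\Wti_{dK^+d^{-1}}$ closes the argument. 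I expect the main delicate step to be establishing $\Wti W_K=\Wti W_{I\cap K}$ via the recursive application of Theorem \ref{dyer} to $W_K$; once this identity is in hand, everything else follows mechanically from the uniqueness built into the two semidirect product decompositions.
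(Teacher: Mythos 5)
Your proposal is correct and follows essentially the same route as the paper: the paper also reduces everything to the decomposition $W_K=W_{I\cap K}\ltimes \Wti_{K^+}$ (recorded there as Proposition \ref{inter parabolique}, which you re-derive inline), uses normality of $\Wti$ plus Theorem \ref{dyer}(a),(d) for the bijectivity and minimal-length claims, and conjugates by $d$ for the intersection identity. The only cosmetic difference is that the paper computes $\lexp{d}{K^+}=\Jti\cap\lexp{d}{W_K}$ directly from $K^+=\Jti\cap W_K$, where you use a sandwich argument.
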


\begin{proof}
First, $\Wti\backslash W / W_K=W/\Wti W_K=W/W_K\Wti$ and, since 
$W_K = W_{I \cap K} \ltimes \Wti_{K^+}$, we have 
$$W/W_K\Wti=W/(W_{I \cap K} \ltimes \Wti) \simeq W_I/W_{I \cap K}.$$
This shows the first assertion. 

Now, let $d \in X_{I \cap K}^I$. Then, since $\Wti$ is normal in $W$, 
we get 
$$\Wti \cap \lexp{d}{W_K}=\lexp{d}{(\Wti \cap W_K)}=\lexp{d}{\Wti_{K^+}}$$
(see Proposition \ref{inter parabolique}). But $W_I$ acts on the 
pair $(\Wti,\Jti)$, so 
$$\lexp{d}{\Wti_{K^+}}=\Wti_{\lexp{d}{K^+}}.$$
Now, by Proposition \ref{inter parabolique}, we have 
$$\lexp{d}{K^+}=\lexp{d}{(\Jti \cap W_K)}=\Jti \cap \lexp{d}{W_K}.$$
So the last assertion follows.

It remains to show that $d$ is the unique element of minimal length 
in $\Wti d W_K$. We have $\Wti d W_K= d(W_{I \cap K} \ltimes \Wti)$. 
Let $x \in W_{I \cap K}$ and $w \in \Wti$ be such that 
$\ell(dxw) \le \ell(d)$. Then, by Theorem \ref{dyer} (d), 
$\ell(dxw) \ge \ell(dx) =\ell(d)+\ell(x)$, so $x=1$. 
Again by Theorem \ref{dyer} (d), we get $\ell(dw) =\ell(d)$, 
so $w=1$, as expected.
\end{proof}

\bigskip

\section{External semidirect products}

\medskip

In this section, we  discuss the converse of Theorem \ref{dyer}(a)--(b), giving  conditions 
which imply that an external  semidirect product of Coxeter groups is a 
Coxeter group.

\medskip

Let  $(W',I)$ and $(\Wti ,\Jti)$ be Coxeter systems and 
$\theta\colon W'\rightarrow \text{\rm Aut}(\Wti ,\Jti)$ 
be a group homomorphism, where the right hand side is the group 
of automorphisms of $(\Wti,\Jti)$.  One may regard $\theta$ as a 
homomorphism from $W'$ to the automorphism group of $\Wti$, and form 
the semidirect product of groups $W:=\Wti\rtimes W'$, with $\Wti $ normal. 
We regard $W'$ and $\Wti $ as subgroups of $W$ in the usual way. Thus, 
every element  $w$ of $W$ has a unique expression $w=\widetilde ww'$ 
with $w'\in W'$ and $\widetilde w\in\Wti $. The product in $W$ is determined 
by the equation $w'\widetilde{w}w^{\prime -1}=\theta(w')( \widetilde w)$ 
for $w'\in W'$, $\widetilde w\in \Wti $.    

\bigskip

\begin{theorem}\label{external}  
Fix a set $J$ of $W_I$-orbit representatives on $\Jti$, 
and set $S:=I{\cup} J$. 
For any $s\in S$, let $s^\perp:=\mset{r\in S\mid rs=sr}$.
Then $(W,S)$ is a Coxeter system iff the conditions $(1)$ 
and $(2)$ below hold:
\begin{itemize}
\itemth{1} for all $r,s\in J$ and $u\in W'$ with $r=usu^{-1}$, 
one has  $r=s$  and $u\in W_{\! I\cap r^\perp}'$. 
  
\itemth{2} for all $r\in J$ and $s\in \Jti$ with $r\neq s$ and 
$rs$ of finite order,  either $\text{\rm (i)}$ or $\text{\rm (ii)}$ 
below holds:
\begin{itemize}
\itemth{i} $s=utu^{-1}$ for some  $u\in W_{\! I \cap r^\perp}'$ and 
$t\in J$ with  $t\neq r$ and  $rt$ of finite order  

\itemth{ii} $s=uvrvu^{-1}$ for some $u\in W_{\! I \cap r^\perp}'$ and  
$v\in I$ with $rv$ of finite order greater than $2$.
\end{itemize} 
\end{itemize}
\end{theorem}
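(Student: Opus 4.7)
My plan for the forward direction, that a Coxeter group structure on $(W,S)$ forces (1) and (2), is to apply Theorem \ref{dyer} directly to $(W,S)$. I would first verify its hypothesis that no element of $I$ is $W$-conjugate to an element of $J$: any $W$-conjugate of $s\in I\subseteq W'$ in the semidirect product has the form $\widetilde w\cdot\theta(w')(s)\cdot\widetilde w^{-1}$ and lies in $\Wti$ only if $\theta(w')(s)=1$, which is absurd. Condition (1) then follows from Lemma \ref{centralisateur} together with the fact that $J$ is a set of $W'$-orbit representatives on $\Jti$. Condition (2) I would read off from Theorem \ref{gal2}: the finite order of $rs$ for $r\in J$ and $s\in\Jti$ with $r\neq s$ forces $f(r,s)\in\{1\}\cup I$, producing case (i) when $\nu(s)\neq r$ and case (ii) when $\nu(s)=r$, with the $u$ and $v$ read off from an explicit decomposition in $W'$.

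For the reverse direction, the plan is to construct an abstract Coxeter group and show it is canonically isomorphic to $W$. I would define $m^*_{s,t}$ to be the order of $st$ in $W$ for $s,t\in S$, form the abstract Coxeter group $W^*$ on generators $S$ with matrix $M^*=(m^*_{s,t})$, and consider the tautological surjection $\pi\colon W^*\to W$. The injectivity of $\pi$ is what needs to be proved.

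To apply Theorem \ref{dyer} to $(W^*,S)$, I would first check by direct computation in $\Wti\rtimes W'$ that $(st)^k\in\Wti$ if and only if $k$ is even whenever $s\in I$ and $t\in J$, so that $m^*_{s,t}$ is even; by the odd-path criterion for conjugacy of simple reflections, no element of $I$ is $W^*$-conjugate to an element of $J$. Theorem \ref{dyer} then yields $W^*=\Wti^*\rtimes W^*_I$ with $(\Wti^*,\Jti^*)$ a Coxeter system, and the restriction of $\pi$ to $W^*_I$ is an isomorphism onto $W'$ since both are Coxeter groups presented on $I$ by the restriction of $M^*$.

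It remains to identify $(\Wti^*,\Jti^*)$ with $(\Wti,\Jti)$ as Coxeter systems via $\pi$. On the level of generating sets, condition (1) on the target and Lemma \ref{centralisateur} on the source identify both $\Jti^*$ and $\Jti$ with the disjoint union $\bigsqcup_{t\in J}W'/W'_{I\cap t^\perp}$ in a $W^*_I\cong W'$-equivariant way. To match the Coxeter matrices, Theorem \ref{gal2} applied to $W^*$ expresses the matrix of $(\Wti^*,\Jti^*)$ by Gal's formula; condition (2) is then used to show that whenever $r\in J$ and $s\in\Jti$ are distinct with $rs$ of finite order in $\Wti$, the explicit form of $s$ in case (i) or (ii) gives, after a short conjugation computation, an order of $rs$ equal to $m^*_{r,\nu(s)}$ or $m^*_{r,v}/2$, matching Gal's formula exactly. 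The main obstacle will be ruling out the opposite possibility --- that Gal's formula could produce a spurious finite value where the actual order is infinite --- and I would handle this by contrapositive: any finite output of Gal's formula arises from an explicit conjugacy relation whose very existence forces the true order of $rs$ to be finite and equal to that output. Combining the two isomorphisms with their semidirect product structures yields injectivity of $\pi$, whence $(W,S)$ is a Coxeter system.
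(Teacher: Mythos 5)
Your proposal is correct and follows essentially the same route as the paper: the paper likewise derives the forward direction from Theorem \ref{dyer}, \eqref{j tilde} and Theorem \ref{gal2}, and for the converse builds the abstract Coxeter group $U=W^*$ on the matrix of orders in $W$, applies Theorem \ref{dyer} to it, uses condition (1) to make the induced map $\Jti'\to\Jti$ bijective, and uses condition (2) (via the same commutation of $u\in W'_{I\cap r^\perp}$ with $r$) to match the orders of products, which is your Gal-formula comparison in only slightly different packaging.
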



\begin{proof} 
It is easy to see that $S$ is a set of involutions generating $W$.  
No element of $I$ is $W$-conjugate to an element of $J$
(since any $W$-conjugate of an element of $J$ is in $\Wti $);
in particular, the union $S=I\dot \cup J$ is disjoint
(we shall use  $\dot\cup$ to  denote disjoint union throughout this paper). Moreover, 
a simple computation shows that for $s\in I$ and $r\in J$, the order of $sr$ 
in $W$ is even, equal to twice the order of $r'r$ in $\Wti $ 
where $r'=\theta(s)(r)=srs$.
 
\medskip

For $r,s\in S$, let $m_{r,s}$ denote the order of $rs$. We have $m_{r,r}=1$ 
and $m_{r,s}=m_{s,r}\in \Nat_{\geq 2}\cup\set{\infty}$ for all $r\neq s$. 
Let $(U,S)$ be a Coxeter system with Coxeter matrix $m_{r,s}$ i.e. 
$U$ is a Coxeter group with $S$ as its set of Coxeter generators, and the 
order of $rs$ in $U$ is $m_{r,s}$ for all $r,s\in S$.

\medskip

For any  $K\subseteq S$,  let $U_K$ denote the standard parabolic  subgroup 
of $U$ generated by $K$.  Let $\Jti'$ denote the subset of $U$  consisting 
of all products $usu^{-1}$ in $U$ with $s\in J$ and $u\in U_I$, and 
let $\widetilde {U}$ denote the subgroup of $U$ generated by $\Jti'$.
No element of $I$ is conjugate in $U$ to an element of $J$, since $m_{r,s}$ 
is even for all $r\in I$ and $s\in J$. Hence, by Theorem \ref{dyer}, there 
is a semidirect product decomposition $U=U_I\ltimes \widetilde{U} $ 
with $\widetilde{U}$ normal in $U$.

\medskip

Since  $rs$ has the same order $m_{r,s}$ in both $U$ and $W$, for any  
$r,s\in S$, there is a group epimorphism $\pi\colon U\rightarrow W$  which 
is the identity on $S$. The homomorphism $\pi$ restricts to an isomorphism 
of Coxeter systems $(U_I,I)\rightarrow (W',I)$ (which we 
henceforward regard as an identification) and  $\pi$ also restricts to 
an isomorphism of Coxeter systems $(U_J,J)\rightarrow (\Wti_J,J)$. 
Further, $\pi$ restricts to a surjective, $W'$-equivariant
(for the conjugation actions by $W'$)  group homomorphism 
$\tilde \pi\colon \widetilde{U}\rightarrow \Wti$  and  
$\tilde \pi$ restricts further to a surjective map of $W'$-sets 
$\pi'\colon \Jti'\rightarrow \Jti$. 

\medskip

Now if $(W,S)$ is a Coxeter system, the validity of the conditions of  Theorem \ref{external} (1)--(2) follows readily from \eqref{j tilde} and Theorem \ref{gal2}. (In this case, the 
map $\tilde \pi$ is of course an isomorphism of Coxeter systems).

\medskip

Conversely, suppose that (1) and (2) hold. It will suffice to show that 
$\tilde \pi$ is an isomorphism of Coxeter systems. First, we show that 
$\pi'$ is injective. Consider two arbitrary elements $uru^{-1}$ and 
$vsv^{-1}$ of $\Jti'$, with $u,v\in W'$ and $r,s\in  J$. Assume 
$\pi(uru^{-1})=\pi(vsv^{-1})$ i.e. $u\pi(r)u^{-1}=v\pi(s)v^{-1}$.
Then $\pi(r)=x\pi(s)x^{-1}$ where $x=u^{-1}v\in W'$. By (1), $r=s$ and 
$x\in W'_{I\cap r^\perp}$. By the defining relations for $(U,S)$, 
it follows that $r=xsx^{-1}$ in $U$, so $uru^{-1}=vsv^{-1}$ in $U$.
Hence $\pi'$ is injective, and in fact bijective since 
we noted above that $\pi'$ is a surjection.

\medskip

Now it will suffice to show that for all distinct  $r',s'\in \Jti'$,   
$r's'$ has the same order in $U$ as $\pi(r')\pi(s')$ has in $W$. Using the 
$W'$-equivariance of $\tilde  \pi$, we may assume that $r'=r\in J$ and 
$s'=s\in \Jti'$. Also, we may assume that $\pi(r)\pi(s)$ has finite order 
$n>1$ in $W$, without loss of generality. We have by (2) that 
either  $\pi(s)=u \pi(t)u^{-1}$ for $u\in W_{\! I\cap r^\perp}'$, 
$t\in J$ with $t\neq r$ and $\pi(t)\pi(r)$ of finite order, or 
$\pi(s)=uv\pi(r)vu^{-1}$ for some $u\in W_{\! I\cap r^\perp}'$ and  
$v\in I$ with $v\pi(r)$ of finite order greater than $2$.
In the first  (resp., second) case, $\pi(r)\pi(s) =u\pi(r)\pi(t)u^{-1}$
(resp.,  $\pi(r)\pi(s)=u\pi(r)v\pi(r)vu^{-1}$) and  $n$ is the order of 
$\pi(r)\pi(t)$  (resp., half the order of $\pi(r)v)$ in $W$. In the first  
case, $s=utu^{-1}$. The relations of $(U,S)$ imply that  $rs=urtu^{-1}$, 
which  has the same order  as $rt$ in $U$. In the second case,
$s=uvrvu^{-1}$ and the relations of $(U,S)$ imply that $rs=urvrvu^{-1}$, 
which has order equal to half the order of $rv$ in $U$. The definition of 
$U$ implies that the order of $rt$ (resp., $rv$) in $U$ is the same as that 
of $\pi(r)\pi(t)$ (resp., $\pi(r)v$) in $W$ and so   the order of $rs$ in 
$U$ is equal to the order  $n$ of $\pi(r)\pi(s)$ in $W$ in either case, 
completing the proof. 
\end{proof}

\medskip

\noindent{\sc Remark - } 
We leave open the question of  whether different choices of the set 
$J$ of $W_I$-orbit representatives satisfying the conditions 
in Theorem \ref{external} are possible, or if possible, would give rise to isomorphic Coxeter 
systems  $(W,I\cup J)$. 

\bigskip

\section{Semi-direct products and root systems}

\medskip

 In this section, we use the  standard geometric 
realization of $(W,S)$ as a reflection group associated to a based 
root system. In fact, it is convenient (and essential for the main result Theorem \ref{ext root} of this section)
to introduce a slightly more general 
class of geometric realizations with better ``functoriality'' properties with respect to  inclusions of reflection subgroups.

\medskip

Let $\EC$ be a $\RM$-vector space equipped with a symmetric $\RM$-bilinear form 
$\langle,\rangle$.  We say a  subset $\Pi$ of $\EC$ is positively independent
if $\sum_{\alpha\in \Pi}c_\alpha \alpha=0$ with all $c_\alpha\geq 0$ implies that all $c_\alpha=0$.
For example, any $\RM$-linearly independent set is positively independent.
If $\a \in \EC$ is such that $\mpair{\alpha,\alpha}=1$, 
we set $\a^\vee=2\a$ and we define 
$$\fonctionl{s_\a}{\EC}{\EC}{v}{v-\langle v, \a^\vee \rangle \a.}$$
Then $s_\a$ is an orthogonal reflection (with respect to $\langle,\rangle$).  Let
\begin{equation}{\mathrm{COS}}=\{\cos(\pi/m)~|~m \in 
\NM_{\geqslant 2}\} \cup \RM_{\geqslant 1}.
\end{equation}
Assume that $\Pi$ is a subset of $\EC$ with the following properties (i)--(iii):

\medskip

\begin{itemize}\itemth{i} $ \Pi$ is positively independent.
\itemth{ii} For all $\alpha\in \Pi$, $\mpair{\alpha,\alpha}=1$.
\itemth{iii} For all $\alpha,\beta\in \Pi$ with $\alpha\neq \beta$, one has $\mpair{\alpha,\beta}\in -\mathrm{COS}$.\end{itemize}

\medskip

 Let $S:=\mset{s_\alpha\mid \alpha\in \Pi}$, let  $W$ be 
the subgroup of the orthogonal group $O(\EC, \langle, \rangle)$  generated by $S$, 
$$
\Phi:=\{w(\a)~|~w \in W\text{~and~}\a \in \Pi\}, \qquad\Phi_+=\Phi \cap \Bigl(\sum_{\alpha\in \Pi} \RM_{\geqslant 0}~\alpha\Bigr).$$
Then $(W,S)$ is a Coxeter system,  in which the order $m_{s_\alpha,s_\beta}$ of  the product $s_\alpha s_\beta$ for $\alpha,\beta\in \Pi$ is given by 

\begin{equation}\label{order inner prod}  
m_{s_\alpha,s_\beta}=\begin{cases}m, &\text{\rm if $\mpair{\alpha,\beta}=-\cos\DS{\frac{\pi}{m}},\quad m\in 
\Nat_{\geqslant 1}$}\\
\infty, &\text{\rm if $\mpair{\alpha,\beta}\le -1$.} \end{cases} 
\end{equation}
One has 
\begin{equation}
\Phi=\Phi_+ \hskip1mm\dot{\cup}\hskip1mm -\Phi_+.
\end{equation}

\medskip

When the above  conditions hold, we say that $(\Phi,\Pi)$ is a based root system in $(\EC, \langle, \rangle)$ with associated Coxeter system $(W,S)$.  Every Coxeter system is isomorphic to the Coxeter system of some  based root system  (and even to  one with $\mpair{\alpha,\beta}=-\cos\frac{\pi}{m_{s_\alpha,s_\beta}}$ for all $\alpha,\beta\in \Pi$, and with $\Pi$ a basis of $\EC$;  a based root system of this type is called a standard based root system).  All the usual results for standard based root systems which we use in this paper, and their proofs, extend mutatis mutandis to the  based root systems as defined above, unless additional hypotheses are indicated in our statements here (as in Lemma \ref{brink} below, for example).

\medskip

Let us collect some additional basic facts about such based root systems.
\begin{quotation}
{\small 

\begin{lemma} \label{pos root} For  $w\in W$ and $\alpha\in \Phi_+$, one has 
$w(\alpha)\in \Phi_+$ iff $\ell(ws_\alpha)>\ell(w)$.
\end{lemma}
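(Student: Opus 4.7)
This is the standard characterization of the length function for Coxeter groups in terms of the sign changes of positive roots under the action of $w$. Since the paper notes that ``all the usual results for standard based root systems\ldots extend mutatis mutandis'' to the generalized positively-independent setting, I would verify that the classical proof carries over, paying special attention to the points where positive independence replaces linear independence.

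The plan is to proceed by induction on $\ell(w)$. The base case $w=1$ is immediate ($w(\alpha)=\alpha \in \Phi_+$ and $\ell(s_\alpha)\ge 1>0$). For the inductive step, I would write $w=w's_\beta$ with $\ell(w')=\ell(w)-1$ and $\beta\in\Pi$, and exploit the braid identity $s_\beta s_\alpha = s_{s_\beta(\alpha)}s_\beta$ together with the inductive hypothesis applied to $w'$. This argument handles cleanly the case where $\alpha\in\Pi$ is simple (with a split into the subcases $\alpha=\beta$ and $\alpha\neq\beta$). For general $\alpha\in\Phi_+$, I would introduce the inversion set $M(w):=\{\gamma\in\Phi_+: w(\gamma)\in -\Phi_+\}$, prove the counting formula $|M(w)|=\ell(w)$ by induction from the simple-root case, and then analyze the symmetric difference $M(ws_\alpha)\triangle M(w)$ using the involution $s_\alpha$ on $\Phi\setminus\{\pm\alpha\}$: for $\gamma\neq \alpha$ the roots $\gamma$ and $s_\alpha(\gamma)$ contribute to $M(ws_\alpha)\triangle M(w)$ in paired fashion, so their net contribution is even, while $\alpha$ itself is always in exactly one of $M(w)$, $M(ws_\alpha)$, forcing $|M(ws_\alpha)|-|M(w)|$ to be odd of the sign matching whether $w(\alpha)\in\Phi_+$ or not.

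The key preliminary underpinning the whole argument is that for each $\beta\in\Pi$, the reflection $s_\beta$ sends $\beta$ to $-\beta$ and permutes $\Phi_+\setminus\{\beta\}$. In the classical linearly-independent setting this is a one-line check from uniqueness of $\Pi$-coefficients; under the paper's weaker positive independence assumption it requires more care and is the main obstacle. I would establish it by contradiction: assuming $\gamma\in\Phi_+\setminus\{\beta\}$ has $s_\beta(\gamma)\in -\Phi_+$ and combining non-negative $\Pi$-expansions of $\gamma$ and $-s_\beta(\gamma)$ via the identity $\gamma - s_\beta(\gamma) = 2\langle\gamma,\beta\rangle\beta$ to extract a positively-independent relation that, together with the normalization $\langle\gamma,\gamma\rangle=1$, forces $\gamma=\beta$. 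Once this preliminary is in hand, the rest of the argument is a routine induction of exactly the classical shape.
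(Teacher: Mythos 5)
The paper offers no proof of this lemma at all: it is quoted as one of the ``usual results'' that carry over to positively independent based root systems, so your argument has to stand entirely on its own. Its overall architecture --- the simple-root case by induction, the key fact that $s_\beta$ permutes $\Phi_+\setminus\{\beta\}$ for $\beta\in\Pi$, the count $|M(w)|=\ell(w)$, then a reduction for arbitrary $\alpha\in\Phi_+$ --- is the classical one, and your treatment of the key preliminary under mere positive independence is essentially right (you do need the extra subcase where the combined $\beta$-coefficient goes negative, which is killed by pairing against $\beta$ and using $\mpair{\pi,\beta}\le 0$ for $\pi\in\Pi\setminus\{\beta\}$).

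The genuine gap is in the last step. Your pairing of $\gamma$ with $s_\alpha(\gamma)$ proves only that $|M(ws_\alpha)|-|M(w)|$ is \emph{odd}, hence that $\ell(ws_\alpha)\neq\ell(w)$; it does not determine the \emph{sign}. Concretely, with $A:=\{\gamma\in\Phi_+\mid s_\alpha(\gamma)\in-\Phi_+\}$ one finds $|M(ws_\alpha)|-|M(w)|=|A|-2|M(w)\cap A|$: the pairs $\{\gamma,\,-s_\alpha(\gamma)\}$ with $\gamma\in A\setminus\{\alpha\}$ each contribute $0$ or $\pm 2$ to this difference, so the single $\pm 1$ coming from $\alpha$ does not control the sign of the total, contrary to what you assert. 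The missing idea is a convexity observation: for $\gamma\in A\setminus\{\alpha\}$ one has $\gamma+(-s_\alpha(\gamma))=c\alpha$ with $c=\mpair{\gamma,\ck{\alpha}}>0$, so if $w(\alpha)\in\Phi_+$ then $w(\gamma)$ and $w(-s_\alpha(\gamma))$ cannot both lie in $-\Phi_+$ (otherwise $c\,w(\alpha)+(-w(\gamma))+(-w(-s_\alpha(\gamma)))=0$ would be a nontrivial vanishing nonnegative combination of $\Pi$, contradicting positive independence). This gives $|M(w)\cap A|\le(|A|-1)/2$, hence $|M(ws_\alpha)|>|M(w)|$, and the converse direction follows by replacing $w$ with $ws_\alpha$. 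Alternatively, you could bypass the counting entirely with the standard deletion argument: if $w(\alpha)\in-\Phi_+$ and $w=s_1\cdots s_n$ is reduced, then some suffix satisfies $s_{i+1}\cdots s_n(\alpha)=\beta_i$, the simple root of $s_i$ (this is exactly your key preliminary applied at the first sign change), whence $ws_\alpha=s_1\cdots\widehat{s_i}\cdots s_n$ and $\ell(ws_\alpha)<\ell(w)$.
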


\begin{lemma}\label{dyer can}
Let $\D \subseteq \Phi_+$, let $T'=\{s_\a~|~\a \in \D\}$ and let $W'$ 
denote the subgroup of $W$ generated by $T'$. Then $T'$ is the set of 
canonical Coxeter generators of $W'$ if and only if 
$-\langle \a, \b \rangle \in {\mathrm{COS}}$ for all $\a$, $\b \in \D$ such 
that $\a \neq \b$.
\end{lemma}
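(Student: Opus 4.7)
The plan is to prove both implications using the based root system framework, with the reverse direction verifying the axioms and the forward direction reducing to dihedral subgroups.

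For the reverse direction, assume $-\langle \a,\b\rangle \in \mathrm{COS}$ for all distinct $\a,\b \in \D$. I would first verify that $\D$ satisfies conditions (i)--(iii) defining the simple roots of a based root system in $(\EC,\langle,\rangle)$: condition (ii) is immediate since each $\a \in \D \subseteq \Phi$ has $\langle\a,\a\rangle = 1$; condition (iii) is the hypothesis; and positive independence (i) follows from $\D \subseteq \Phi_+$ together with the positive independence of $\Pi$, since expanding each $\a \in \D$ as a nonnegative combination of $\Pi$ converts any positive linear relation on $\D$ into one on $\Pi$. Hence $(W'\D,\D)$ is a based root system with associated Coxeter system $(W',T')$. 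To conclude $T' = \chi(W')$, I would show $T' \subseteq \chi(W')$; since $\chi(W')$ is a Coxeter, hence minimal by Bourbaki (Ch.\ IV, \S 1, Cor.\ 3), generating set of $W'$, and $T'$ already generates $W'$, this forces equality. For the inclusion, fix $\a \in \D$ and any $\b \in \Phi_+$ with $s_\b \in W'$, $\b \neq \a$: since $s_\b$ is $W'$-conjugate to some element of $T'$, we have $\b \in \Phi_\D := W'\D$; the positivity $\b \in \Phi_+$ rules out $\b \in -\Phi_{\D,+}$ (which would force $\b$ to lie in the ambient negative cone), so $\b \in \Phi_{\D,+}$. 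Applying Lemma \ref{pos root} to the subsystem then gives $s_\a(\b) \in \Phi_{\D,+} \subseteq \Phi_+$, i.e., $s_\b \notin N(s_\a)$.

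For the forward direction, assume $T' = \chi(W')$. I would reduce to pairs: for distinct $\a, \b \in \D$, let $W'' := \langle s_\a, s_\b\rangle$. Since $\{s_\a, s_\b\} \subseteq \chi(W')$ and $W'' \subseteq W'$, the definition of $\chi$ immediately yields $\{s_\a, s_\b\} \subseteq \chi(W'')$; and since $W''$ is dihedral with $|\chi(W'')| \le 2$, equality holds, so $\{\a, \b\}$ is the canonical simple system of the dihedral reflection subgroup $W''$. I would then analyze $W''$ on the plane $V := \RM\a + \RM\b$, which $W''$ stabilizes and on which it acts orthogonally. If $s_\a s_\b$ has finite order $m$, then $\langle,\rangle|_V$ is positive definite, the positive roots of $W''$ (in $\Phi_+ \cap V$) are $m$ unit vectors in a common open half-plane of $V$ at equal angular spacing $\pi/m$, and canonicality forces $\a, \b$ to be the two extremal ones; their angle is $\pi - \pi/m$, giving $-\langle\a,\b\rangle = \cos(\pi/m) \in \mathrm{COS}$. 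In the infinite case, a parallel geometric analysis (with $\langle,\rangle|_V$ degenerate or indefinite) yields $-\langle\a,\b\rangle \ge 1$.

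The main obstacle is the dihedral geometry in the forward direction: one must use the ambient positivity $\a, \b \in \Phi_+$ to pin down $-\langle\a,\b\rangle$ specifically as $\cos(\pi/m)$ and not as some other $\pm\cos(k\pi/m)$ with $\gcd(k,m) = 1$ which would equally produce a rotation of order $m$. Positivity forces both $\a$ and $\b$ into a common half-plane of $V$, and the canonical simple property rules out any positive root of $W''$ lying strictly between them in that half-plane, thereby pinning them down as the extremal pair. The backward direction is essentially bookkeeping with the based root system axioms, once the correspondence between positive roots in $\Phi_\D$ and reflections in $W'$ is established.
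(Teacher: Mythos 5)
The paper does not actually prove this lemma; its ``proof'' is the single citation \cite[(4.4)]{dyer}. Your proposal therefore supplies an argument where the paper supplies none, and the argument you sketch is essentially the one underlying Dyer's original proof: the backward direction by checking that $\D$ itself satisfies the based-root-system axioms (i)--(iii), invoking Lemma \ref{pos root} for the subsystem to get $T'\subseteq \chi(W')$, and concluding by the Bourbaki minimality of $\chi(W')$ --- exactly the toolkit this paper deploys in its proof of Theorem \ref{dyer}(c); and the forward direction by reduction to dihedral reflection subgroups, which is the same reduction the paper itself leans on elsewhere via \cite[(4.1)]{dyer} (in the proofs of Lemmas \ref{brink} and \ref{root prod and length}). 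Your backward direction is complete as sketched (the passage from $s_\b\in W'\cap T$ to $\b\in W'\D$ uses \cite[3.11(ii)]{dyer}, and the exclusion of $\b\in-\Phi_{\D,+}$ uses positive independence of $\Pi$; both are fine). Your identification of where positivity of $\a,\b$ enters in the finite dihedral case --- pinning the canonical pair down as the two extremal roots of the half-plane configuration, so that the angle is $\pi-\pi/m$ rather than $k\pi/m$ --- is exactly the right point, and the fact that the $m$ positive roots of $W''$ lie in an open half-plane follows from the positive independence of $\Phi_+$ (inherited from that of $\Pi$), which you should state.

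The one under-argued step is the infinite-order branch of the forward direction. Your parenthetical ``with $\langle,\rangle|_V$ degenerate or indefinite'' tacitly assumes that $s_\a s_\b$ of infinite order forces $|\langle\a,\b\rangle|\ge 1$; a priori there is a third possibility, namely $\langle,\rangle|_V$ positive definite with $s_\a s_\b$ an irrational rotation, and this must be excluded (e.g.\ by noting that the positive roots of $W''$ would then be dense in a closed half-circle, whence for \emph{every} root $\g$ of $W''$ the reflection $s_\g$ sends some positive root of $W''$ other than $\g$ to a negative one, so $|\chi(W'')|\le 1$, contradicting that $(W'',\chi(W''))$ is an infinite dihedral Coxeter system). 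Likewise, in the case $|\langle\a,\b\rangle|\ge 1$ you still have to rule out $\langle\a,\b\rangle\ge 1$ (and more generally $\langle\a,\b\rangle>0$); here positive independence of $\Phi_+$ does the work, e.g.\ canonicality forces $\a-2c\b$ and $\b-2c\a$ (with $c=\langle\a,\b\rangle$) to be positive roots, and for $c\ge \tfrac12$ the identity $(\a-2c\b)+(\b-2c\a)+(2c-1)\a+(2c-1)\b=0$ is a forbidden nonnegative relation. These are patches, not changes of strategy, so I regard the proposal as correct in outline but incomplete at this one point.
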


\begin{proof}
See \cite[(4.4)]{dyer}
\end{proof}

\begin{lemma}[Brink]\label{brink} 
Let $\g \in \Phi_+$. Then one may write $\g=\sum_{\a \in \Pi} c_\a \a$
with  $c_\a/2 \in {\mathrm{COS}}$ for all $\a \in \Pi$. In particular, 
if $c_\a \not\in \{0,1\}$, then $c_\a \ge \sqrt{2}$. If $\Pi$ is linearly independent, the $c_\a$ are uniquely determined by the conditions  $\g=\sum_{\a \in \Pi} c_\a \a$  and $c_\a\in \real$.
\end{lemma}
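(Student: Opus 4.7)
The plan is to argue by induction on $d(\gamma) := \min\{\ell(w) : \gamma = w(\alpha) \text{ for some } \alpha \in \Pi\}$. In the base case $d(\gamma) = 0$, one has $\gamma \in \Pi$, so a valid representation has coefficients in $\{0, 1\}$; since $0 = 2\cos(\pi/2)$ and $1 = 2\cos(\pi/3)$, both halves lie in $\mathrm{COS}$.

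For the inductive step, pick $w$ of length $d(\gamma)$ with $\gamma = w(\alpha)$, $\alpha \in \Pi$, and factor $w = s_\beta w''$ with $\beta \in \Pi$ and $\ell(w'') = d(\gamma)-1$. Set $\gamma' := w''(\alpha)$. A quick check shows $\gamma' \in \Phi_+$: if $\gamma' = -\beta$, then $\gamma = \beta \in \Pi$, contradicting $d(\gamma) \ge 1$; and if $\gamma' \in \Phi_- \setminus \{-\beta\}$, then (since $s_\beta$ stabilizes $\Phi_- \setminus \{-\beta\}$) one gets $s_\beta\gamma' \in \Phi_-$, contradicting $\gamma \in \Phi_+$. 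Since $d(\gamma') \le \ell(w'') = d(\gamma)-1$, the induction hypothesis yields $\gamma' = \sum_\delta c'_\delta\delta$ with $c'_\delta/2 \in \mathrm{COS}$ for every $\delta$. Substituting into $\gamma = s_\beta\gamma' = \gamma' - 2\langle \gamma',\beta\rangle\beta$ and expanding $\langle \gamma',\beta\rangle = c'_\beta - \sum_{\delta \ne \beta} c'_\delta(-\langle \delta,\beta\rangle)$ gives
\[
c_\delta = c'_\delta \ \ (\delta \ne \beta), \qquad c_\beta = -c'_\beta + 2\sum_{\delta \ne \beta} c'_\delta \bigl(-\langle \delta,\beta\rangle\bigr),
\]
so only the membership $c_\beta/2 \in \mathrm{COS}$ remains.

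This is the technical heart of the argument and the main obstacle I expect to encounter. By induction each $c'_\delta \in 2\cdot\mathrm{COS}$, and by the axioms on $\Pi$ each $-\langle \delta,\beta\rangle \in \mathrm{COS}$, but neither sums nor alternating sums of such quantities obviously lie in $2\cdot\mathrm{COS}$. The closure structure of $\mathrm{COS}$, reflected in the product-to-sum identity $2\cos A\cos B = \cos(A-B) + \cos(A+B)$, combined with a rank-two reduction — note that $\gamma$, $\gamma'$, and $\beta$ all lie in $\mathrm{span}(\gamma',\beta)$, so the dihedral subgroup $\langle s_\beta, s_{\gamma'}\rangle$ governs their mutual geometry — should allow one to conclude $c_\beta/2 \in \mathrm{COS}$ after a case analysis on whether $s_\beta s_{\gamma'}$ has finite or infinite order, comparing with the explicit formulas for coefficients of positive roots in dihedral based root systems.

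The remaining assertions are easy. The increasing sequence $(2\cos(\pi/m))_{m \ge 2}$ begins $0, 1, \sqrt{2}, (1+\sqrt{5})/2, \sqrt{3}, \ldots$ and its supremum is $2$, while the complement of $\{2\cos(\pi/m):m\ge 2\}$ in $2\cdot\mathrm{COS}$ is $[2,\infty)$; hence any element of $2\cdot \mathrm{COS}$ outside $\{0,1\}$ is at least $\sqrt{2}$. Finally, when $\Pi$ is linearly independent, uniqueness of the $c_\alpha$ is immediate.
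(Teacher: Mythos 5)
You correctly identify the location of the difficulty, but what you leave there is a genuine gap rather than a verification to be filled in later: the membership $c_\beta/2\in\mathrm{COS}$ is the entire content of the lemma, and the recursion you chose makes it resistant to the tools you name. Your formula $c_\beta=-c'_\beta+2\sum_{\delta\neq\beta}c'_\delta\bigl(-\langle\delta,\beta\rangle\bigr)$ involves a subtraction, and $\mathrm{COS}$ has no useful closure under subtraction. The rank-two reduction you propose also does not do what you want: the dihedral group $\langle s_\beta,s_{\gamma'}\rangle$ controls the expansion of $\gamma$ in terms of $\gamma'$ and $\beta$, but $\{\gamma',\beta\}$ is in general not the canonical simple pair of that dihedral subgroup, so the ``explicit formulas for coefficients of positive roots in dihedral based root systems'' do not apply to it. Already in $W=I_2(5)$ with $\Pi=\{\beta,\delta\}$, taking $\gamma'=s_\delta(\beta)=\beta+2\cos(\pi/5)\,\delta$ and $\gamma=s_\beta(\gamma')$, your formula reads $c_\beta=-1+4\cos^2(\pi/5)$; the correction term $-2\langle\gamma',\beta\rangle=2\cos(2\pi/5)$ is \emph{not} in $2\cdot\mathrm{COS}$, and the fact that $c_\beta=2\cos(\pi/5)$ nevertheless lands in $2\cdot\mathrm{COS}$ is a special identity of the golden ratio, not a consequence of any closure property. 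In higher rank, proving that your alternating sum always lies in $2\cdot\mathrm{COS}$ is essentially equivalent to the lemma itself.

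The paper takes a different recursion precisely to avoid this. For linearly independent $\Pi$ it simply cites Brink; in general it observes that the standard rank-two-reduction proof of Lemma \ref{pos root} (factoring $w=w'v$ with $v$ a distinguished element of a rank-two \emph{standard parabolic} subgroup, rather than stripping one simple reflection at a time) yields a choice of coefficients that are polynomials with non-negative integer coefficients in the root coefficients of rank-two standard parabolic subgroups. Those dihedral coefficients lie in $2\cdot\mathrm{COS}$ by direct computation, and $2\cdot\mathrm{COS}$ is closed under products and under sums of two or more nonzero terms (any such sum is $\geq 2$), so no subtraction ever appears. Your base case, the verification that $\gamma'\in\Phi_+$, and the two final assertions are all fine; the inductive step needs to be replaced by this (or an equivalent) decomposition.
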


\begin{proof}
For the standard reflection representation,  for which $\Pi$ is linearly independent, see \cite[Proposition 2.1]{brink}. A quick sketch
in general is as follows. One checks the statement for dihedral
Coxeter systems (for which $\Pi$ is automatically linearly independent) by  direct calculations (see \cite[(4.1)]{dyer}). Then in general, a standard proof
(loc cit)  of Lemma \ref{pos root} by reduction to rank two shows that there is some choice of root coefficients 
$c_\a$ such that all $c_\a$ are expressible as polynomials with non-negative integer coefficients in the (positive) root coefficients for  rank two  standard parabolic subgroups, and the result follows.
\end{proof}

\begin{lemma}\label{root prod and length} Let $\beta\in\Pi$ and $\alpha\in\Phi_{+}\setminus\set{\beta}$.
Then \begin{enumerate}\itemth{a} $s_{\beta}(\alpha)\in \Phi_{+}$ and $s_{s_{\beta}(\alpha)}=s_{\beta}s_{\alpha}s_{\beta}$.

\itemth{b} $\ell(s_{\beta}s_{\alpha}s_{\beta})$ is equal to $\ell(s_{\alpha})+2$, $\ell(s_{\alpha})$
or $\ell(s_{\alpha})-2$ according as whether $\mpair{\alpha,\beta}<0$, $\mpair{\alpha,\beta}=0$
or $\mpair{\alpha,\beta}>0$. If $\mpair{\alpha,\beta}=0$, then $s_{\beta}s_{\alpha}s_{\beta}=s_{\alpha}$.
\end{enumerate}
\end{lemma}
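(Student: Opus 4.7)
The plan is to prove (a) via Lemma \ref{pos root} combined with a short determinant argument, and then to handle (b) by a case analysis on the sign of $\mpair{\alpha,\beta}$, using (a) repeatedly to track the signs of the reflected roots.

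For (a), Lemma \ref{pos root} reduces showing $s_\beta(\alpha)\in\Phi_+$ to showing $\ell(s_\beta s_\alpha)>\ell(s_\beta)=1$, i.e.\ $s_\beta s_\alpha\notin S\cup\set{1}$. Now $s_\beta s_\alpha \in W\subseteq O(\EC,\langle,\rangle)$ has determinant $(-1)(-1)=+1$, whereas every element of $T\supseteq S$ is an orthogonal reflection and so has determinant $-1$; hence $s_\beta s_\alpha\notin S$. And $s_\beta s_\alpha=1$ would force $s_\alpha=s_\beta$, hence $\alpha=\beta$, contrary to hypothesis. The conjugation identity $s_\beta s_\alpha s_\beta=s_{s_\beta(\alpha)}$ is then a one-line verification valid for any orthogonal transformation $w$ and any unit vector $\alpha$: both sides act as $-1$ on $s_\beta(\alpha)$ and fix $s_\beta(\alpha)^\perp$ pointwise.

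For (b), the case $\mpair{\alpha,\beta}=0$ is immediate: $s_\beta(\alpha)=\alpha$, so $s_\beta s_\alpha s_\beta=s_\alpha$. Next suppose $\mpair{\alpha,\beta}<0$. Using Brink's Lemma to write $\alpha$ as a non-negative real combination of $\Pi$, the vector $s_\alpha(\beta)=\beta-2\mpair{\alpha,\beta}\alpha=\beta+2|\mpair{\alpha,\beta}|\alpha$ is itself a non-negative real combination of elements of $\Pi$; being a root, it lies in $\Phi_+$. Since $\ell(s_\beta s_\alpha)=\ell(s_\alpha s_\beta)$, applying Lemma \ref{pos root} to $w=s_\alpha$ with root $\beta$ gives $\ell(s_\beta s_\alpha)=\ell(s_\alpha)+1$. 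Moreover $s_\alpha(\beta)\neq\beta$ (since $\mpair{\alpha,\beta}\neq 0$), so $s_\alpha(\beta)\in\Phi_+\setminus\set{\beta}$; part (a) applied with $s_\alpha(\beta)$ in place of $\alpha$ shows $(s_\beta s_\alpha)(\beta)=s_\beta(s_\alpha(\beta))\in\Phi_+$. A second application of Lemma \ref{pos root} then yields $\ell(s_\beta s_\alpha s_\beta)=\ell(s_\beta s_\alpha)+1=\ell(s_\alpha)+2$.

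The remaining case $\mpair{\alpha,\beta}>0$ I would handle by symmetry. Set $\alpha':=s_\beta(\alpha)$, which lies in $\Phi_+\setminus\set{\beta}$ by (a) and satisfies $\mpair{\alpha',\beta}=-\mpair{\alpha,\beta}<0$. Applying the case just proved to $\alpha'$ gives $\ell(s_\beta s_{\alpha'}s_\beta)=\ell(s_{\alpha'})+2$; but by (a), $s_\beta s_{\alpha'}s_\beta=s_{s_\beta(\alpha')}=s_\alpha$, so $\ell(s_{\alpha'})=\ell(s_\alpha)-2$, and therefore $\ell(s_\beta s_\alpha s_\beta)=\ell(s_{s_\beta(\alpha)})=\ell(s_{\alpha'})=\ell(s_\alpha)-2$. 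The main conceptual point, and the place I would need to be most careful, is the first step of (a): because $\Pi$ is only assumed positively (not linearly) independent, the standard textbook coefficient-bookkeeping proof of $s_\beta(\alpha)\in\Phi_+$ does not immediately transfer, and using the orthogonal action of $W$ to distinguish products of reflections from single reflections cleanly sidesteps this issue.
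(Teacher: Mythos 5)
Your proof is correct, but it takes a genuinely different route from the paper's. The paper disposes of (a) as well known and proves (b) by reduction to the dihedral case: it forms the reflection subgroup $W'=\langle s_\alpha,s_\beta\rangle$ with canonical generators $\chi(W')$, verifies the length inequalities inside $W'$ by direct rank-two computation, and transfers them to $(W,S)$ via the compatibility of length functions in Lemma \ref{dyer X}(c). You instead argue entirely inside $(W,S)$ with the positive-root criterion of Lemma \ref{pos root}, applied twice in the case $\mpair{\alpha,\beta}<0$ (once to $w=s_\alpha$ acting on $\beta$, once to $w=s_\beta s_\alpha$ acting on $\beta$), which is arguably more self-contained: it avoids both the dihedral calculation and the $\chi(W')$ machinery, and your treatment of the case $\mpair{\alpha,\beta}=0$ via $s_\beta(\alpha)=\alpha$ is cleaner than invoking commutativity in the dihedral subgroup. (You do not actually need the refined coefficient bounds of Lemma \ref{brink} for the positivity of $s_\alpha(\beta)$; the definition of $\Phi_+$ as the roots in the nonnegative cone, together with positive independence, already suffices.) The only point to tidy is your determinant argument in (a): $\EC$ need not be finite dimensional, so ``determinant'' is not literally available; the intended statement is better expressed via the sign character $w\mapsto(-1)^{\ell(w)}$ of $(W,S)$, under which every reflection in $T$ (having odd length) maps to $-1$ while $s_\beta s_\alpha$ maps to $+1$, whence $s_\beta s_\alpha\notin S\cup\{1\}$ and Lemma \ref{pos root} gives $s_\beta(\alpha)\in\Phi_+$. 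This is a cosmetic repair, not a gap.
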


\begin{proof} Part (a) is well-known, and so is (b)  in the special  case of linearly independent  simple roots. One may also verify   (b) for dihedral Coxeter systems by  direct calculation
(using  \cite[(4.1)]{dyer}) again, for instance). In general, (b) may be reduced to the dihedral case as follows. Let $W':=\mpair{s_{\alpha},s_{\beta}}$, $T'=\chi(W')$
and $l'$ be the length function of $(W',T')$. In case $\mpair{\alpha,\beta}=0$, then by the dihedral case, $s_{\beta}s_{\alpha}=s_{\alpha}s_{\beta}$ and so $\ell(s_{\beta}s_{\alpha}s_{\beta})=\ell(s_{\alpha})$. In case  $\mpair{\alpha,\beta}<0$, then by the dihedral case, one has $\ell'(s_{\beta})<\ell'(s_{\beta}s_{\alpha})<\ell'(s_{\beta}s_{\alpha}s_{\beta})$.
Hence  by Lemma \ref{dyer X} (c), one has $\ell(s_{\beta})<\ell(s_{\beta}s_{\alpha})<\ell(s_{\beta}s_{\alpha}
s_{\beta})$ and thus $\ell(s_{\beta}s_{\alpha}s_{\beta})=\ell(s_{\alpha})+2$ as required.
The remaining case $\mpair{\alpha,\beta}>0$ follows from (a) and the second case
applied to 
$\alpha':=s_{\beta}(\alpha)$ in place of $\alpha$, since $\mpair{\alpha',\beta}<0$.  \end{proof}
 The chief technical advantage of the class of based root systems is explained  by Lemma \ref{subsystem} below. It  follows  from the definition and  previously given facts about  based root systems (especially Lemma \ref{dyer can} and \eqref{order inner prod}).

\medskip
  
\begin{lemma}\label{subsystem} 
Let $(\Phi, \Pi)$ be a based root system in $(\EC, \langle,\rangle)$, 
with associated Coxeter system $(W,S)$. Let $W'$ be a reflection subgroup 
of $(W,S)$ and set $S':=\chi(W')$. 
Let $\Psi:=\mset{\a\in \Phi\mid s_\alpha\in W'}$ and 
$\Delta:=\mset{\a\in \Phi_{+}\mid s_\alpha\in S'}$. Then 
$(\Psi, \Delta)$ is a based root system in $(\EC, \langle,\rangle)$ 
with associated Coxeter system $(W',S')$.
\end{lemma}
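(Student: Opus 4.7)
The plan is to verify the three axioms (i)--(iii) defining a based root system for the pair $(\Psi,\D)$, and then separately to identify the associated Coxeter system with $(W',S')$. The main substantive step is verifying positive independence of $\D$; the remaining points are quick consequences of earlier results.

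Axiom (ii) is immediate: every element $\a \in \D \subseteq \Phi$ is a $W$-translate of some element of $\Pi$, and $W$ acts by orthogonal transformations, so $\mpair{\a,\a}=1$. Axiom (iii) is essentially the content of Lemma \ref{dyer can} applied to the canonical generating set $S' = \chi(W')$ and the subset $\D \subseteq \Phi_+$: for distinct $\a, \b \in \D$ the reflections $s_\a, s_\b$ are distinct elements of $S'$ (distinctness of $s_\a, s_\b$ from $\a \neq \b \in \Phi_+$ is standard), and so $-\mpair{\a,\b} \in \mathrm{COS}$ by that lemma.

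For axiom (i), suppose $\sum_{\a \in \D} c_\a \a = 0$ with all $c_\a \geq 0$ (only finitely many nonzero). Since $\D \subseteq \Phi_+$, each $\a \in \D$ admits an expansion $\a = \sum_{\b \in \Pi} d_{\a,\b}\,\b$ with $d_{\a,\b} \geq 0$. Substituting and regrouping yields $\sum_{\b \in \Pi} \bigl(\sum_{\a \in \D} c_\a d_{\a,\b}\bigr)\,\b = 0$, and positive independence of $\Pi$ forces $\sum_\a c_\a d_{\a,\b} = 0$ for every $\b \in \Pi$. Since all summands are non-negative, any $\a$ with $c_\a > 0$ would have all $d_{\a,\b}=0$ and hence $\a = 0$, contradicting $\mpair{\a,\a}=1$. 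Thus all $c_\a$ vanish.

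Finally, I would identify the associated Coxeter system of $(\Psi,\D)$ with $(W',S')$. The group of orthogonal reflections generated by $\{s_\a \mid \a \in \D\}$ is $\langle S' \rangle = W'$ by definition of $\chi(W')$. It remains to check that $\Psi$ coincides with the $W'$-orbit of $\D$: one inclusion is clear since $W'$ preserves $\Phi$ and each $s_\a$ lies in $W'$. For the reverse, if $\gamma \in \Phi$ with $s_\gamma \in W'$, then by the description of reflections in a reflection subgroup \cite[(3.3)]{dyer}, $s_\gamma$ is $W'$-conjugate to some $s_\a$ with $\a \in \D$, whence $\gamma = \pm w(\a)$ for some $w \in W'$; both signs place $\gamma$ in $W'\cdot \D$ (using $-w(\a) = ws_\a(\a)$). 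Once the three axioms are in place, the decomposition $\Psi = \Psi_+ \dotcup (-\Psi_+)$ for the new positive roots follows automatically from the general theory of based root systems established earlier in the section. No individual step is genuinely deep; the subtlest point is the reconciliation of the two descriptions of $\Psi$, which is a routine application of the reflection-subgroup machinery.
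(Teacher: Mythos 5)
Your proposal is correct and follows essentially the route the paper itself indicates: the paper gives no written-out proof, stating only that the lemma ``follows from the definition and previously given facts about based root systems (especially Lemma \ref{dyer can} and \eqref{order inner prod})'', and your verification of axioms (i)--(iii) — with Lemma \ref{dyer can} supplying the COS condition, positive independence of $\Delta$ inherited from that of $\Pi$, and the identification $\Psi=W'(\Delta)$ via the description of reflections in a reflection subgroup — is precisely the argument being gestured at. The only cosmetic point is that the conjugacy statement you need for the reverse inclusion $\Psi\subseteq W'(\Delta)$ is the one the paper cites as \cite[3.11(ii)]{dyer} rather than \cite[(3.3)]{dyer}.
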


\medskip

\noindent {\sc Remark -}  Note that even if  $(\Phi,\Pi)$ is a standard based root system and   $S'$ is finite, the elements of $\Delta$ need not be linearly independent, and for elements $\alpha,\beta$ of $\Delta$ such that $s_\alpha s_\beta$ has infinite order, one may have $\mpair{\alpha,\beta}<-1$. Thus, the lemma fails for the class of standard based root systems in two important respects.  
 
 }
\end{quotation}

\bigskip 
Although not logically required in this paper, we include the following  alternative proof of Theorem \ref{gal2} and  part of  Theorem \ref{dyer} using based root systems, because of its intrinsic  interest and since the general method of proof may be applicable in other situations. Precisely, we shall prove here the following:
\begin{theorem}\label{alternative} Let $(W,S)$ be a Coxeter system. Let $S=I\dot \cup J$ be a partition of $S$ as in $\text{\rm Theorem \ref{dyer}}$, and define $\Wti,\Jti,\Mti$ as in $\text{\rm Theorems \ref{dyer}}$ and $\text{\rm \ref{gal2}}$.
Then $(\Wti, \Jti)$ is a Coxeter system with Coxeter matrix $\Mti$ and 
$\Jti=\chi(\Wti)$ is the canonical set  of Coxeter generators of $\Wti$.
\end{theorem}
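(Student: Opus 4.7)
The plan is to prove both parts of the theorem by a single inner-product computation in a based root system $(\Phi,\Pi)$ for $(W,S)$, with simple roots $\alpha_s$ indexed by $S$. For each $\tti\in \Jti$, writing $\tti=xtx^{-1}$ with $x\in W_I$ and $t\in J$, I set $\gamma_{\tti}:=x(\alpha_t)$; this is a positive root by Lemma~\ref{pos root} (since $\ell(xs_t)=\ell(x)+1$ for $x\in W_I$ and $t\in J$), and it depends only on $\tti$ as it is the unique positive root whose associated reflection is $\tti$. Put $\Delta:=\{\gamma_{\tti}\mid \tti\in \Jti\}\subseteq\Phi_+$.

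The key observation is that verifying $-\langle \gamma_{\sti},\gamma_{\tti}\rangle\in {\mathrm{COS}}$ for all distinct $\sti,\tti\in \Jti$ suffices: Lemma~\ref{dyer can} will then give $\Jti=\chi(\Wti)$; Lemma~\ref{subsystem} will present $(\Wti,\Jti)$ as the Coxeter system associated to the based root system with simple roots $\Delta$; and \eqref{order inner prod} will convert the inner products directly into the Coxeter matrix. Hence the theorem reduces to computing $\langle \gamma_{\sti},\gamma_{\tti}\rangle$ and checking that it equals $-\cos(\pi/\mti_{\sti,\tti})$ (with the value $\leq -1$ when $\mti_{\sti,\tti}=\infty$).

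For the computation, I write $\sti=xsx^{-1}$, $\tti=yty^{-1}$ with $s,t\in J$, and factor $x^{-1}y = u\, d\, v$ with $u\in W_{I\cap s^\perp}$, $v\in W_{I\cap t^\perp}$ and $d:=f(\sti,\tti)\in X^I_{I\cap s^\perp,I\cap t^\perp}$. Because the relevant simple roots are mutually orthogonal, $W_{I\cap s^\perp}$ fixes $\alpha_s$ and $W_{I\cap t^\perp}$ fixes $\alpha_t$, whence $\langle \gamma_{\sti},\gamma_{\tti}\rangle=\langle \alpha_s,d(\alpha_t)\rangle$. When $d=1$ and $s\neq t$ this is $-\cos(\pi/m_{s,t})$; when $s=t$ and $d=u\in I$, a one-line expansion gives $1-2\cos^2(\pi/m_{s,u})=-\cos(\pi/(m_{s,u}/2))$, where $m_{s,u}/2\in \ZM_{\geq 2}$ since $m_{s,u}\geq 4$ is even by \eqref{even} and the minimality of $d$; each value matches $\mti_{\sti,\tti}$. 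When $s\neq t$ and $d=u\in I$, both $m_{s,u}$ and $m_{t,u}$ are even integers $\geq 4$, so $2\cos(\pi/m_{s,u})\cos(\pi/m_{t,u})\geq 1$ and $-\langle \alpha_s,u(\alpha_t)\rangle = \cos(\pi/m_{s,t})+2\cos(\pi/m_{s,u})\cos(\pi/m_{t,u})\geq 1$, as required.

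The remaining case, $\ell(d)\geq 2$, where the target is $\langle \alpha_s,d(\alpha_t)\rangle\leq -1$, is the main obstacle I foresee. I would attack it by induction on $\ell(d)$, combining (i) the Brink expansion $d(\alpha_t)=\alpha_t+\sum_{r\in I}c_r\alpha_r$ of Lemma~\ref{brink}, whose nonzero coefficients are bounded below by $1$ (and by $\sqrt 2$ in many configurations), (ii) the observation that double-coset minimality of $d$ forces the first and last letters $u_1,u_\ell$ of a reduced expression for $d$ to have even Coxeter labels $\geq 4$ with $s$ and $t$ respectively, and (iii) the recursive decomposition $d(\alpha_t)=d'(\alpha_t)+2\cos(\pi/m_{u_\ell,t})\,d'(\alpha_{u_\ell})$, where $d':=du_\ell$ is a shorter element of $W_I$ and both summands are positive roots (by Lemma~\ref{pos root}); the goal is to amplify the two guaranteed nonzero contributions to $-\langle \alpha_s,d(\alpha_t)\rangle$ past $1$. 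Once this estimate is established, the reductions of the second paragraph close the proof.
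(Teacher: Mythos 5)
Your overall strategy coincides with the paper's: realize $(W,S)$ via a based root system, attach to each $\tti\in\Jti$ the positive root $\gamma_{\tti}=x(\alpha_t)$, reduce via the double-coset factorization to the single inner product $\langle\alpha_s, d(\alpha_t)\rangle$ with $d=f(\sti,\tti)$, and conclude through Lemma \ref{dyer can}, Lemma \ref{subsystem} and \eqref{order inner prod}. Your treatment of the cases $d=1$ and $d=u\in I$ is correct (including the observation that double-coset minimality forces $m_{s,u},m_{t,u}$ to be even and $\ge 4$, which is exactly what makes the estimates work).

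However, the case $\ell(d)\ge 2$ --- which you yourself flag as ``the main obstacle'' --- is not proved; you only describe a plan, and as stated the plan does not close. The difficulty is concentrated in the subcase $s=t$: there $\langle\alpha_s,d(\alpha_t)\rangle = 1+\sum_{\gamma}c_\gamma\langle\alpha_s,\gamma\rangle$ starts from $+1$, so the negative contributions must total at most $-2$, not merely ``past $1$''. There is no guarantee of \emph{two} nonzero contributions: it can happen that exactly one $\gamma\in\supp(d(\alpha_t))\cap\Pi_I$ fails to be orthogonal to $\alpha_s$. The paper handles this by (i) a sharpened coefficient dichotomy (Lemma \ref{racine de 2}(b): for such a $\gamma$ occurring in a reduced word for $d$, either $c_\gamma\ge 2\sqrt2$, which suffices since $2\sqrt2\cdot\tfrac{\sqrt2}{2}=2$, or $c_\gamma=-\langle\beta,\gamma^\vee\rangle$), (ii) the statement that $I_\Gamma\not\subseteq s^\perp$, proved via Deodhar's Lemma and Lemma \ref{centralisateur} (Lemma \ref{orthogonal}), and (iii) in the residual branch $c_\gamma=-\langle\beta,\gamma^\vee\rangle$, a support-connectivity argument (Lemma \ref{support}(b)) showing $\Gamma=\{\gamma\}$ and $s_\gamma d\in W_{I\cap t^\perp}$, which contradicts $\ell(d)\ge 2$ by Deodhar's Lemma. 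None of these three ingredients appears in your sketch, and your stated lower bound ``$\ge 1$ (and $\ge\sqrt2$ in many configurations)'' for the Brink coefficients is too weak even to start: in this setting one needs, and the paper proves, that every nonzero coefficient on $\Pi_I$ is $\ge\sqrt2$ (a consequence of \eqref{inf}, not of Lemma \ref{brink} alone). Until the $\ell(d)\ge2$ estimate is actually established, the theorem is not proved.
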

\begin{proof} 
We assume without loss of generality that $(W,S)$
is the Coxeter system associated to a  based root system $(\Phi,\Pi)$ such that $\Pi$ is linearly independent. We keep other notation as above. 

\medskip

Let $\Pi_K:=\mset{\alpha\in \Pi\mid s_\alpha\in K}$ for any $K\subseteq S$. By  \eqref{even} and \eqref{order inner prod}, the assumption that no element of $I$ 
is conjugate to any element of $J$ is therefore equivalent to 
the assertion that if $\gamma\in \Pi_I$ and $\delta\in \Pi_J$, 
then $\mpair{\gamma, \delta}$ is either of the form
$\mpair{\gamma,\delta}=-\cos\frac{\pi}{2m}$ for some $m\in\Nat_{\geq 1}$ or satisfies $\mpair{\gamma,\delta}\le -1$. In particular,
\begin{equation}\label{inf}
\text{\it If $\g \in \Pi_I$ and $\d \in \Pi_J$, then 
$\mpair{\g,\d} \le \DS{-\frac{\sqrt{2}}{2}}$.}
\end{equation} 
Now, let
$$\Piwt=\{w(\a)~|~w \in W_I\text{~and~}\a \in \Pi_J\}.$$
Then $\Piwt \subseteq \Phi_+$ by Lemma \ref{pos root}, and $\Jti=\{s_\a~|~\a \in \Piwt\}$. 

\medskip

By Lemma \ref{dyer can} and \eqref{order inner prod}, 
it is sufficient to show that, if 
$\alpt$, $\bett \in \Piwt$ are such that $\alpt\neq \bett$ 
and if $\sti=s_\alpt$ and $\tti=s_\bett$, then 
$$\begin{cases}
\mpair{\alpt,\bett} = -\cos\DS{\Bigl(\frac{\pi}{\mti_{\sti,\tti}}\Bigr)} & 
\text{if $\mti_{\sti,\tti} < \infty$,}\\
\mpair{\alpt,\bett} \le -1 & 
\text{if $\mti_{\sti,\tti} = \infty$,}\\
\end{cases}\leqno{(*)}$$
For this, let $s=\nu(\sti)$, $t=\nu(\tti)$ and let $x$, $y \in W_I$ be such 
that $\sti=xsx^{-1}$ and $\tti=yty^{-1}$. Let $\a=x^{-1}(\alpt)$, 
$\b=y^{-1}(\bett)$ and $w=f(\sti,\tti)$. Then $\a$, $\b \in \Pi_J$, 
$s=s_\a$, $t=s_\b$ and 
$$\mpair{\alpt,\bett} = \mpair{\a,w(\b)}.$$
Indeed, if we write $x^{-1}y=awb$ with $a \in W_{I \cap s^\perp}$ and 
$b \in W_{I \cap t^\perp}$, then 
$$\mpair{\alpt,\bett}=\mpair{x(\a),y(\b)}=\mpair{\a,awb(\b)}=
\mpair{a^{-1}(\a),wb(\b)}=\mpair{\a,w(\b)}.$$

We shall now need the notion of the support of a positive root. If $\d \in \Phi_+$, 
write $\d=\sum_{\g \in \Pi} c_\g \g$ with $c_\g \ge 0$: 
the {\it support} $\supp(\delta)$ of $\delta$ is 
the subset of $\Pi$ defined by 
$\supp(\delta):=\mset{\g\in \Pi\mid c_\g\neq 0}$. 
This is well-defined since we have assumed $\Pi$ is linearly independent.
We recall the following facts:
\begin{quotation}
{\small
\begin{lemma}\label{support} Let $\d \in \Phi_+$ and
$A:=\mset{s_{\g}\mid \g\in \supp(\d)}$. Then
\begin{enumerate} \item 
 $s_\d \in W_{A}$.
\item The full subgraph of the Coxeter graph of $(W,S)$ with vertex set 
$A$ is connected. \end{enumerate}
\end{lemma}
\begin{proof}
We prove (a)--(b) by induction on $l(s_{\delta})$. If $l(s_{\delta})=1$, then $\delta\in\Pi$
and (a)--(b) are clear. Otherwise, write $\d=\sum_{\alpha\in \Pi}c_{\alpha}\alpha$
with all $c_{\alpha}\geq 0$. Since $0<1=\mpair{\alpha,\d}=\sum_{\alpha}c_{\alpha}\mpair{\alpha,\d}$ there is some $\alpha\in \supp(\d)$ with $\mpair{\alpha,\d}>0$. Note $\a\neq\d$ since $\d\not\in \Pi$, so $\g:=s_{\a}(\d)\in \Phi_{+}$.  By Lemma \ref{root prod and length}, $l(s_{\g})=l(s_{\d})-2$.
Let $B:=\mset{s_{\b}\mid \b\in \supp(\d)}$. By induction, $s_{\gamma}\in W_{B}$ and 
the full subgraph of the Coxeter graph of $(W,S)$ on vertex set $B$ is connected.
Since $\delta=s_{\a}(\g)=\g+\mpair{\d,\a}\a$, we have $\supp(\d)=\sup(\g)\cup\set{\a}$
and $A=B\cup\set {s_{\a}}$. Since $0>-\mpair{\a,\d}=\mpair{\a,\g}$, an argument like that above shows that there is some $\b\in \supp(\g)$ with $\mpair{\a,\b}<0$. Therefore $s_{\alpha}$ is joined by an edge in the Coxeter graph of $(W,S)$ to $s_{\beta}\in B$, completing the inductive proof of (b). Since  $s_{\d}=s_{\a}s_{\g}s_{\a}\in  W_{A}$,  the inductive proof of (a) is also finished
\end{proof}
}
\end{quotation}
\medskip

Now, let $\G$ be the unique subset of $\Pi_I$ such that 
$\supp(w(\b))=\G \cup \{\b\}$ and  set $I_\G=\{s_\g~|~\g \in \G\}$. We write
$$w(\b)=\b+\sum_{\g \in \G} c_\g \g,$$
with $c_\g >0$. 
In order to prove $(*)$, we shall need the following lemmas:

\begin{quotation}
{\small \begin{lemma}\label{racine de 2}
Let $\g \in \Pi_I$. Then:
\begin{itemize} 
\itemth{a} If $\g \in \G$, then $c_\gamma\geq \sqrt 2$.

\itemth{b} If $s_\gamma$ appears in a reduced expression for $w$ 
and  $\mpair{\beta,\ck \gamma}\neq 0$, then $\g \in \G$ and either 
$c_\gamma=-\mpair {\beta,\ck\gamma}$ or $c_\gamma\geq 2\sqrt 2$.
\end{itemize}
\end{lemma}

\begin{proof}
We shall argue by induction on $\ell(w)$. If $\ell(w)=0$, 
this is vacuously true. Otherwise, write $w=xs_\delta$ where 
$\delta\in \Pi_I$ and $\ell(x)<\ell(w)$.
We have $s_\delta(\beta)=\beta+c\delta$ where 
$c:=-\mpair{\beta,\ck\delta}$. If $c=0$, then $w(\beta)=x(\beta)$ 
and the desired result follows by induction. Otherwise, 
$c\geq \sqrt 2$ and $w(\beta)=x(\beta)+c x(\delta)$. 
Note $x(\delta) \in \Phi_+$  by Lemma \ref{pos root} since 
$\ell(xs_\delta) > \ell(x)$. Using the inductive hypothesis (a)--(b)
for  $x(\beta)$ and  Lemma \ref{brink} for $x(\delta)$, 
one  gets (a)--(b) for $w(\b)$ (for (b), one has to consider
the cases $\gamma=\delta$, $\gamma\neq \delta$ separately, and note that 
if $s_\delta$ does not appear in a reduced expression for $x$, then 
the coefficient of $\delta$ in $x(\delta)$ is $1$). 
\end{proof}

\begin{lemma}\label{orthogonal}
If $I_\G \subseteq s^\perp$, then $w=1$.
\end{lemma}

\begin{proof}
Indeed, if $I_\G \subseteq s^\perp$, then Lemma \ref{support}(a) implies that
we have  $w t w^{-1} \in W_{\{t\} \cup (I \cap s^\perp)}$. In other words, 
$wt \in W_{\{t\} \cup (I \cap s^\perp)} w$. But $w$ has minimal 
length in $W_{\{t\} \cup (I \cap s^\perp)} w$ by construction, so 
$wt$ does not have minimal length in 
$W_{\{t\} \cup (I \cap s^\perp)} wt$. By Deodhar's Lemma, 
there exists $u \in \{t\} \cup (I \cap s^\perp)$, 
such that $wt=uw$. In other words, $u=wtw^{-1}$ and, since no element 
of $I$ is conjugate to $t$, we have $u=t$ and 
$wt=tw$. So $w \in W_{I \cap t^\perp}$ (see Lemma \ref{centralisateur}), 
and thus $w=1$ because $w$ has minimal length in $w W_{I \cap t^\perp}$. 
\end{proof}
}
\end{quotation}
 
\medskip

We shall now prove $(*)$ by a case-by-case analysis:

\medskip

$\bullet$ If $s=t$ and $w \in I$, let us write $w=s_\g$ with 
$\g \in \Pi_I$. Then $\a=\b$, $\mti_{\sti,\tti}=m_{s,w}/2$ 
and $w(\b)=\a-\mpair{\a,\g^\vee}\g$, so 
$$\mpair{\a,w(\b)}=\mpair{\a,\a}-2 \mpair{\a,\g}^2 = 
1 - 2 \cos^2\Bigl(\frac{\pi}{m_{s,w}}\Bigr)=
-\cos\Bigl(\frac{2\pi}{m_{s,w}}\Bigr),$$
as required. 

\medskip

$\bullet$ If $s=t$ and $\ell(w) \ge 2$, then $\mti_{\sti,\tti}=\infty$. 
First, note that
$$I_\G \nsubseteq s^\perp$$
(see Lemma \ref{orthogonal}). Moreover,
$$\mpair{\a,w(\b)}=\mpair{\a,\b} + \sum_{\g \in \G} c_\g \mpair{\a,\g}
= 1 +\sum_{\stackrel{\SS{\g \in \G}}{s_\g \not\in s^\perp}} 
c_\g \mpair{\a,\g}$$
But, if $\g \in \G$ is such that $s_\g \not\in s^\perp$, then 
$c_\g \ge \sqrt{2}$ by Lemma \ref{racine de 2} (a) and 
$\mpair{\a,\g}=-\cos(\pi/m_{s,s_\g}) \le -\sqrt{2}/2$ by \eqref{inf} 
(since $\a \in \Pi_J$ and $\g \in \Pi_I$). 
Therefore, 
$$\mpair{\a,w(\b)}\le 1 - |I_\G \setminus s^\perp|.$$
So, if $|I_\G \setminus s^\perp| \ge 2$, then $\mpair{\a,w(\b)}\le -1$, 
as required.

So we may assume that $I_\G \setminus s^\perp = \{s_\g\}$ with 
$\g \in \G$. Note that $\mpair{\a,w(\b)}=1-c_{\g}\mpair{\a,\g}$ and that  $s_\g$ appears in a reduced expression of $w$. 
By Lemma \ref{racine de 2} (b), two cases may occur:
\begin{itemize}
\item[-] If $c_\g \ge 2\sqrt{2}$ then, since 
$\mpair{\a,\g} \le -\sqrt{2}/2$ (again by the inequality \eqref{inf}), 
we get that $\mpair{\a,w(\b)}\le -1$, as required.

\item[-] If $c_\g = -\mpair{\b,\g^\vee}$ then 
$$ \supp(s_{\g}w\b)=\supp(w\b)\setminus\set{\g}= (\Gamma\setminus\set{\g})\cup\set{\b}.$$ But no element of $\mset{s_{\d}\mid \d\in \Gamma\setminus\set{\g}}$ is connected
to $s_{\beta}$ in the Coxeter graph of $(W,S)$, so by Lemma \ref{support} (b) we get that
$\Gamma=\set{\gamma}$, $\supp(s_{\g}w\b)=\set{\b}$ and so $s_{\g}w\b=\b$. 
Hence $s_{\g} w \in W_{I \cap t^\perp}$. By 
Deodhar's Lemma, this can only happen 
if $w=s_\g$, which contradicts the fact that $\ell(w) \ge 2$.
\end{itemize}

\medskip

$\bullet$ If $s\neq t$ and $w=1$, then $\mti_{\sti,\tti}=m_{s,t}$ and 
$$\mpair{\alpt,\bett}=\mpair{\a,\b}=-\cos\Bigl(\frac{\pi}{m_{s,t}}\Bigr),$$
as required. 

\medskip

$\bullet$ If $s \neq t$ and $w \neq 1$, then $\mti_{\sti,\tti}=\infty$. 
First, note that
$$I_\G \nsubseteq s^\perp$$
(see Lemma \ref{orthogonal}). So let $\g \in \G$ be such that 
$\mpair{\a,\g} \neq 0$. Then $c_\g \ge \sqrt{2}$ by Lemma \ref{racine de 2} 
and, by \eqref{inf}, we have  
$\mpair{\a,\g} \le -\sqrt{2}/2$ (since 
$\a \in \Pi_J$ and $\g \in \Pi_I$). So 
$$\mpair{\a,w(\b)} \le  \mpair{\a,\b} -1+ 
\sum_{\g' \neq \g} c_{\g'} \mpair{\a,\g'} \le -1$$
because $\mpair{\a,\b} \le 0$ and $\mpair{\a,\g'} \le 0$ 
for all $\g' \in \Pi_I$.

The proof of Theorem \ref{alternative} is now complete.
\end{proof}

\bigskip

The final main  result of this section is  a geometric variant  (Theorem \ref{ext root} below) of Theorem \ref{external}. To formulate it,  we shall require  the notions of 
automorphisms,   fundamental chamber and Tits cone of a based root system. 
The latter two are principally of interest when the form $\langle, \rangle$ 
on $\EC$ is non-degenerate, but our application won't require this 
(and non-degeneracy can always be achieved by enlarging the space $\mc{E}$ 
and extending the form $\langle, \rangle$, anyway). 

\medskip

Let $(\Phi, \Pi)$ be a based root system in $(\EC, \langle,\rangle)$, with associated Coxeter system $(W,S)$. By an automorphism of $(\Phi,\Pi)$, we mean an element $\theta$  of $O(\EC, \langle,\rangle)$ which restricts to  permutations of both  $\Pi$ and  $\Phi$.  For example, in the setting of the proof of Theorem \ref{alternative}, $W_I$ acts naturally as a group of based root  system automorphisms of the 
based root system attached  by Lemma  \ref{subsystem} to $\widetilde W$.

\medskip

  In general, we define the fundamental chamber of $(W,S)$ on $\mc{E}$ to be the subset  $\CC=\CC_{(W,S)}:=\mset{\rho \in \EC\mid \mpair{\alpha,\rho}\geq 0 \text{ for all }\alpha\in \Pi}$ of $\EC$, and we call  $\XC=\XC_{(W,S)}=W\CC:=\cup_{w\in W}w(\CC)$ the Tits cone.
The most basic properties of $\CC$ and $\XC$ (see \cite{bourbaki}) are recalled in the following Lemma.

\medskip
\begin{quotation}
{\small 
\begin{lemma}\label{Tits} \begin{itemize} \itemth{a} $\XC=\mset{\rho\in \EC\mid \vert \mset{\alpha\in \Phi_+\mid \mpair{\alpha,\rho}<0}\vert<\infty}$. In particular, $\XC$ is a convex cone in $\EC$.
\itemth{b} Any $W$-orbit on $X$ contains a unique element of $\CC$.
\itemth{c} For $\a\in \CC$, the stabilizer $W_\a:=\mset{w\in W\mid w(\a)=\a}$ of $\a$ is the standard parabolic subgroup of $W$ generated by
$\mset{s\in S\mid s(\a)=\a}$.
\end{itemize}\end{lemma}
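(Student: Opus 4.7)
The plan is to prove the three parts in order, following the standard Bourbaki-style treatment (Ch.~V, \S{}4) but being careful to use only ingredients that extend to the mildly generalized based root systems of this section. The technical facts I shall rely on are: (i) $|N(w)|=\ell(w)$, with the explicit list of positive roots sent negative by $w^{-1}$ being $\beta_i:=s_{\alpha_1}\cdots s_{\alpha_{i-1}}(\alpha_i)$ for a reduced expression $w=s_{\alpha_1}\cdots s_{\alpha_n}$; and (ii) for any $\alpha\in\Pi$, the reflection $s_\alpha$ permutes $\Phi_+\setminus\{\alpha\}$ and sends $\alpha$ to $-\alpha$. Both are instances of statements the paper has declared to extend mutatis mutandis from the standard case.

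For (a), I would first verify $\XC$ is contained in the right-hand side. For $\sigma\in\CC$ one has $\langle\gamma,\sigma\rangle\ge 0$ for every $\gamma\in\Phi_+$ (each positive root is a non-negative combination of simple roots by the definition of $\Phi_+$), so the bad set is empty; for $\rho=w\sigma$ with $\sigma\in\CC$, the bad set is $\{\alpha\in\Phi_+\mid w^{-1}\alpha\in -\Phi_+\}=N(w^{-1})$, which has size $\ell(w)$. For the reverse inclusion, I would induct on $n(\rho):=|\{\alpha\in\Phi_+\mid\langle\alpha,\rho\rangle<0\}|$. If $n(\rho)=0$, then $\rho\in\CC$. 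Otherwise $\rho\notin\CC$, so the definition of $\CC$ provides $\alpha\in\Pi$ with $\langle\alpha,\rho\rangle<0$; using (ii), the map $\gamma\mapsto s_\alpha\gamma$ is a bijection of $\Phi_+\setminus\{\alpha\}$ and a direct calculation gives $n(s_\alpha\rho)=n(\rho)-1$, so induction places $s_\alpha\rho$, and hence $\rho$, in $\XC$. Convexity and conicity of $\XC$ are then automatic from this characterization: a convex combination $t\rho_1+(1-t)\rho_2$ can pair negatively with $\alpha\in\Phi_+$ only if one of $\rho_1,\rho_2$ does, so $n(t\rho_1+(1-t)\rho_2)\le n(\rho_1)+n(\rho_2)<\infty$, and scaling by $\RM_{>0}$ preserves $n$.

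For (b), I would fix $\rho,\rho'\in\CC$ with $\rho'=w\rho$ and take $w$ of minimal length with this property. If $\ell(w)>0$, choose $s=s_\alpha\in S$ with $\ell(sw)<\ell(w)$, so $w^{-1}\alpha\in -\Phi_+$. Then
\[ \langle\alpha,\rho'\rangle=\langle\alpha,w\rho\rangle=\langle w^{-1}\alpha,\rho\rangle\le 0, \]
while $\rho'\in\CC$ gives the reverse inequality; hence $\langle\alpha,\rho'\rangle=0$ and $s\rho'=\rho'$. But then $\rho'=s\rho'=(sw)\rho$ with $\ell(sw)<\ell(w)$, contradicting minimality. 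So $w=1$ and $\rho=\rho'$.

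For (c), the inclusion $W^0:=\langle s\in S\mid s\rho=\rho\rangle\subseteq W_\rho$ is trivial. For the reverse, take $w\in W_\rho$ with reduced expression $w=s_{\alpha_1}\cdots s_{\alpha_n}$ and the positive roots $\beta_i$ inverted by $w^{-1}$ as in (i). For each $i$, $\langle\beta_i,\rho\rangle\ge 0$ since $\beta_i\in\Phi_+$ and $\rho\in\CC$, while $\langle w^{-1}\beta_i,\rho\rangle=\langle\beta_i,w\rho\rangle=\langle\beta_i,\rho\rangle\le 0$ since $w^{-1}\beta_i\in -\Phi_+$. So $\langle\beta_i,\rho\rangle=0$ for all $i$; in particular $\langle\alpha_1,\rho\rangle=0$, giving $s_{\alpha_1}\in W^0$. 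Then $s_{\alpha_1}w\in W_\rho$ has reduced expression $s_{\alpha_2}\cdots s_{\alpha_n}$ of strictly smaller length, and induction on $\ell(w)$ yields $s_{\alpha_1}w\in W^0$, hence $w\in W^0$. The main (minor) obstacle throughout is not the argument itself but the book-keeping needed to confirm that (i) and (ii) survive intact when $\Pi$ is only required to be positively independent rather than linearly independent; this is routine given the earlier lemmas of the section and the paper's convention that such standard material extends.
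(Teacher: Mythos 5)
Your proof is correct and is exactly the standard Bourbaki argument (Ch.\ V, \S 4) that the paper itself simply cites for this lemma, so there is nothing to compare beyond noting that your adaptation to positively independent $\Pi$ is sound. One cosmetic point: in part (a) the set $\mset{\alpha\in\Phi_+\mid \mpair{\alpha,w\sigma}<0}$ is only \emph{contained in} $\mset{\alpha\in\Phi_+\mid w^{-1}\alpha\in-\Phi_+}$ (equality can fail when $\mpair{w^{-1}\alpha,\sigma}=0$), but containment is all that finiteness requires.
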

}
\end{quotation}

Now we may state:

\medskip

\begin{theorem}\label{ext root} Let $(\Psi,\Delta)$ and $(\widetilde \Phi,\widetilde \Pi)$  be two based root systems
in $(\EC, \langle, \rangle)$ with associated Coxeter systems $(W',I)$ and $(\Wti, \Jti)$  respectively.
Let $\CC:=\CC_{(W',I)}$ and $\XC:=\XC_{(W',I)}$.
Assume that $W'(\widetilde \Pi)\subseteq \widetilde\Pi$. Then $W'$ acts as a group of based root system automorphisms of $(\widetilde \Phi,\widetilde \Pi)$ and  also as a group of automorphisms of the Coxeter system $(\Wti,\Jti)$.
Let $W$ denote the subgroup of $O(\EC, \langle, \rangle)$ generated by the   subset  $W'\cup \Wti$.
Then  $W=\Wti\rtimes W'$.
 Under these assumptions, the following conditions are equivalent:
\begin{itemize}\itemth{i} There is a  based root system $(\Phi,\Pi)$ with
$\Delta\subseteq \Pi\subseteq \Delta\cup\widetilde \Pi$ and  $\widetilde{\Pi}=W'(\Pi\setminus \Delta)$.
\itemth{ii}
 $\Delta \cup \widetilde{\Pi}$ is positively independent and
$\widetilde{\Pi}\subseteq- \XC$.
\end{itemize} 
 Assume  conditions $\text{\rm (i)--(ii)}$ hold. Then  $\Pi=\Delta ~\dot\cup~ (\widetilde \Pi \cap -\CC)$
 (so $(\Phi,\Pi)$ is uniquely determined in $\text{\rm (i)}$),  
   $\Psi\cup\widetilde  \Phi\subseteq \Phi$, 
 and $\widetilde{\Phi}_+\subseteq -\XC$.
  Set $S:=\mset{s_\alpha\mid \alpha\in \Pi}$ and $J=S\setminus I$. Then $(W,S)$ is the Coxeter system associated to the based root system $(\Phi,\Pi)$, 
  $\Jti=\mset{wsw^{-1}\mid w\in W', s\in J}$, and no element of $I$ is conjugate to any element of $J$. The semidirect  product decomposition 
  $W=\Wti\rtimes W'$ is that attached by $\text{\rm Theorem \ref{dyer}}$  to the 
  subsets $I$ and $J$ of $S$.
\end{theorem}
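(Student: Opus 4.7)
The proof naturally separates into four stages, described below: (A) the automorphism action of $W'$ and the semidirect product $W=\Wti\rtimes W'$; (B) the implication (i) $\Rightarrow$ (ii); (C) the reverse implication (ii) $\Rightarrow$ (i), where the main obstacle lies; and (D) the additional consequences under (i) and (ii). For (A): since $W'(\widetilde\Pi)\subseteq\widetilde\Pi$, each $w\in W'$ restricts to a bijection of $\widetilde\Pi$, and hence of $\widetilde\Phi=\Wti\widetilde\Pi$ and of $\RM_{\ge 0}\widetilde\Pi$, so it preserves $\widetilde\Phi_+$. The identity $ws_{\widetilde\alpha}w^{-1}=s_{w(\widetilde\alpha)}$ shows that $W'$ normalizes $\Wti$ and acts as automorphisms of $(\Wti,\Jti)$. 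Since any nontrivial element of $\Wti$ sends some element of $\widetilde\Phi_+$ to $-\widetilde\Phi_+$, while elements of $W'$ preserve $\widetilde\Phi_+$, we have $W'\cap\Wti=\{1\}$, yielding $W=\Wti\rtimes W'$.

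For (B): I first note $\widetilde\Pi\subseteq\Phi_+$. Indeed, each $\widetilde\beta\in\widetilde\Pi$ has the form $w(\gamma)$ with $w\in W'$ and $\gamma\in\Pi\setminus\Delta$; by Theorem \ref{dyer}(d) the element $w$ has minimal length in its $\Wti$-coset, so $\ell(ws_\gamma)>\ell(w)$, and Lemma \ref{pos root} gives $w(\gamma)\in\Phi_+$. For positive independence of $\Delta\cup\widetilde\Pi$: given a nonnegative relation $\sum_\alpha a_\alpha\alpha+\sum_{\widetilde\beta}b_{\widetilde\beta}\widetilde\beta=0$, expand each $\widetilde\beta$ as a nonnegative combination of $\Pi$; positive independence of $\Pi$ forces every coefficient of $\Pi$ to vanish, hence $a_\alpha=0$ and each $\widetilde\beta$ with $b_{\widetilde\beta}>0$ is the zero vector, contradicting $\mpair{\widetilde\beta,\widetilde\beta}=1$. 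For $\widetilde\Pi\subseteq-\XC$ I use iterated reflection descent: given $\widetilde\beta\in\widetilde\Pi$, as long as some $\alpha\in\Delta$ satisfies $\mpair{\alpha,\widetilde\beta}>0$, replace $\widetilde\beta$ by $s_\alpha(\widetilde\beta)\in\widetilde\Pi$; by Lemma \ref{root prod and length}(b) this strictly decreases $\ell(s_{\widetilde\beta})$ by $2$, so the process terminates with some $w(\widetilde\beta)\in-\CC$, placing $\widetilde\beta\in W'(-\CC)=-\XC$.

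For (C), where the main obstacle lies: set $\Pi:=\Delta\cup(\widetilde\Pi\cap-\CC)$. The union is disjoint, since $\mpair{\alpha,\alpha}=1>0$ prevents $\alpha\in\Delta$ from lying in $-\CC$, and $\widetilde\Pi=W'(\widetilde\Pi\cap-\CC)$ follows from $\widetilde\Pi\subseteq-\XC=W'(-\CC)$ together with $W'$-invariance of $\widetilde\Pi$. Positive independence of $\Pi\subseteq\Delta\cup\widetilde\Pi$ and $\mpair{\alpha,\alpha}=1$ are immediate. The subtle axiom—the main obstacle of the whole theorem—is $\mpair{\alpha,\beta}\in-\mathrm{COS}$ for distinct $\alpha\in\Delta$, $\beta\in\widetilde\Pi\cap-\CC$. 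Here $\mpair{\alpha,\beta}\le 0$ from $\beta\in-\CC$, and the key trick is to apply $s_\alpha\in W'$ to $\beta$: the element $\gamma:=s_\alpha(\beta)$ lies in $\widetilde\Pi$, and either $\gamma=\beta$, which gives $\mpair{\alpha,\beta}=0$, or $\gamma\neq\beta$ are distinct elements of $\widetilde\Pi$, forcing $\mpair{\gamma,\beta}\in-\mathrm{COS}$ by the based root system axiom for $(\widetilde\Phi,\widetilde\Pi)$. The identity $\mpair{\gamma,\beta}=1-2\mpair{\alpha,\beta}^2$ then gives $\mpair{\alpha,\beta}^2=\cos^2(\pi/(2m))$ for some $m\ge 2$, or $\mpair{\alpha,\beta}^2\ge 1$, and combined with $\mpair{\alpha,\beta}\le 0$ this yields $\mpair{\alpha,\beta}\in-\mathrm{COS}$. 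Hence, with $\Phi:=W\Pi$, the pair $(\Phi,\Pi)$ is a based root system.

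For (D): any $\Pi'$ satisfying (i) has $\Pi'\setminus\Delta\subseteq\widetilde\Pi\cap-\CC$ by the based root axiom applied to pairs $\alpha\in\Delta$, $\beta\in\Pi'\setminus\Delta$, and conversely any $\beta\in\widetilde\Pi\cap-\CC$ equals $w(\beta_0)$ for some $w\in W'$ and some $\beta_0\in\Pi'\setminus\Delta\subseteq-\CC$, whence $\beta=\beta_0$ by the fundamental domain property of $-\CC$ (Lemma \ref{Tits}(b)); thus $\Pi$ is uniquely determined. The inclusions $\Psi\cup\widetilde\Phi\subseteq\Phi$ are immediate, and $\widetilde\Phi_+\subseteq-\XC$ holds because $-\XC$ is a convex cone (Lemma \ref{Tits}(a)) containing $\widetilde\Pi$ and hence its nonnegative span. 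That $(W,S)$ is the Coxeter system of $(\Phi,\Pi)$ uses $W=\pair{W',\Wti}=\pair{s_\alpha\mid\alpha\in\Pi}$ together with the based root system property; the identity $\Jti=\mset{wsw^{-1}\mid w\in W',s\in J}$ is a restatement of $\widetilde\Pi=W'(\widetilde\Pi\cap-\CC)$; non-conjugacy of $I$ to $J$ in $W$ follows by projecting any putative conjugation to $W/\Wti\cong W'$ and invoking $W'\cap\Wti=\{1\}$; and the match with Theorem \ref{dyer}'s decomposition is direct by comparing definitions.
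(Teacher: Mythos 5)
Your proposal is correct and follows essentially the same route as the paper's proof: the same reduction of everything to the key computation $s_\alpha(\beta)=\beta+2c\alpha\in\widetilde\Pi$ combined with the $-\mathrm{COS}$ axiom for $(\widetilde\Phi,\widetilde\Pi)$ applied to the pair $(s_\alpha(\beta),\beta)$, the same identification $\Pi=\Delta~\dot\cup~(\widetilde\Pi\cap-\CC)$ via the fundamental domain property, and the same structure for the semidirect product and the remaining consequences. The only deviations are local and harmless: for $\widetilde\Pi\subseteq-\XC$ in (i)$\Rightarrow$(ii) you run a reflection-descent on $\ell(s_{\tilde\beta})$ where the paper simply observes that $\Pi\setminus\Delta\subseteq-\CC$ directly from axiom (iii) and then applies $W'$, and for the non-conjugacy of $I$ and $J$ you project to $W/\Wti$ where the paper uses $W$-stability of $\widetilde\Phi$ together with $\Psi\cap\widetilde\Phi=\varnothing$.
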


\begin{proof}  
For any $\theta\in O(\EC, \langle, \rangle)$ and $\alpha\in \EC$ with
$\mpair{\alpha,\alpha}=1$, one has $\mpair{\theta(\alpha),\theta(\alpha)}=1$ and $s_{\theta(\alpha)}=\theta s_\alpha\theta ^{-1}$. Assume further that $\theta (\Pi')\subseteq \Pi'$. Then this implies that $\Jti$, and hence $\Wti$, is stable under conjugation by $\theta $, and so $\theta $ acts as an automorphism of $(\Wti,\Jti)$.
If $\alpha\in \widetilde \Phi$, we can write $\alpha=x(\beta)$ for some $\beta\in \widetilde \Pi$
and $x\in \widetilde W$. Then $\theta (\alpha)=\theta x(\beta)=(\theta x\theta ^{-1})(\theta (\beta))\in \widetilde \Phi$
since $\theta x\theta ^{-1}\in \widetilde W$ and $\theta (\beta)\in \widetilde \Pi$. 
Hence $\theta(\widetilde \Phi)\subseteq \widetilde  \Phi$. For $\gamma\in\widetilde  \Phi_+$, we may write $\gamma=\sum_{\alpha\in \widetilde \Pi}c_\alpha \alpha$
with all $c_\alpha\geq 0$. Then $\theta(\gamma)=\sum_{\alpha\in \widetilde \Pi} c_\alpha \theta(\alpha)\in \widetilde \Phi_+$ since all $\theta(\alpha)\in \widetilde \Pi$, showing that $\theta(\widetilde \Phi_+)\subseteq \widetilde  \Phi_+$.

\medskip

The above all applies with $\theta\in W'$, proving that $W'$ acts as automorphisms of $(\Wti,\Jti)$ and $(\widetilde\Phi,\widetilde \Pi)$. In particular, $W'$ normalizes $\Wti$. 
If $w\in W'$, then $w$ permutes $\widetilde \Phi_+$. If $w\in W'\cap \Wti$, this implies that $\tilde \ell(w)=0$
(since $w$ makes no element of $\widetilde \Phi_+$ negative) so $w=1_{W'}$. From the above, we see that
$W=W'\Wti=\Wti\rtimes W'$ as claimed. We also see that $\Psi\cap \widetilde \Phi=\emptyset$,
for if $\alpha\in\Psi\cap \widetilde \Phi$, then $s_\alpha\in W'\cap \Wti=\set{1_{W'}}$
which is  a contradiction. From this, one sees further that $\widetilde \Phi$ is stable under the $W$-action on $\EC$ and hence that  no element of $\Psi$ is $W$-conjugate to any element of $\widetilde \Phi$. 

\medskip

Now suppose that  the assumptions of (i) hold. 
 Since $\Pi_+$ is positively independent, it follows that $\Phi_+$ is positively independent, and hence so also   is the subset $\Delta\cup \widetilde \Pi$ of $\Phi_+$.
 Let $\alpha\in \Pi\setminus \Delta\subseteq \widetilde \Pi$.
 Since $\alpha\not\in \Delta$, we have $\mpair{\alpha,\beta}\in -\textrm{COS}$ for all $\beta\in \Delta$. In particular, $\mpair{\alpha,\beta}\le 0$ so $\alpha\in -\CC$.
 Thus, $\Pi\setminus \Delta\subseteq -\CC$.
 Hence  \[\widetilde \Pi=W'(\Pi\setminus \Delta)\subseteq W' (-\CC)=-\XC.\]
 Therefore $\widetilde \Phi_+\subseteq -\XC$ also since $\XC$ is a convex cone.
 Since every $W'$-orbit on $-\XC$ contains a unique point of $-\CC$, $\widetilde \Pi$ is $W'$-stable
 and $\widetilde \Pi\subseteq W'(\Pi\setminus \Delta)$,
 it follows using Lemma \ref{Tits} (b) that $\Pi\setminus \Delta=\widetilde \Pi\cap -\CC$.
 Observe also that  we  have $\Psi\cup \widetilde \Phi\subseteq \Phi$ and so 
  \[W=\mpair{s_\alpha\mid\alpha\in  \Psi\cup\widetilde \Phi}\subseteq
 \mpair{s_\alpha\mid \alpha\in \Phi}=\mpair{s_\alpha\mid \alpha\in \Pi}
 \subseteq \mpair{s_\alpha\mid \alpha\in \Delta\cup\widetilde \Pi}=W\] 
 which implies that if (i) holds, then  the Coxeter system associated to $(\Phi,\Pi)$
is  $(W,S)$ 
 where $S:=\mset{s_\alpha\mid \alpha\in \Pi}$.
 
 \medskip
 
 Now suppose that the assumptions of (ii) hold. Set $\Pi=\Delta ~\dot\cup~ (\widetilde \Pi \cap -\CC)$. 
Clearly,  $\Delta\subseteq \Pi\subseteq \Delta\cup\widetilde \Pi$.  We also have $\widetilde{\Pi}=W'(\Pi\setminus \Delta)$
since $\widetilde{\Pi}\subseteq- \XC$ and $\widetilde{\Pi}$ is $W'$-stable.
Let $S:=\mset{s_\alpha\mid \alpha\in \Pi}$ and $W''$ be the subgroup generated by $S$. It is clear
$W''$ contains $W'$ and $s_\alpha$ for $\alpha\in \Pi\setminus \Delta$, so it also contains
$ws_\alpha w^{-1}$ for such $\alpha$ and all $w\in W'$. That is,  $W''$ contains the group generated by
$s_\beta$ for all $\beta\in W'(\Pi\setminus \Delta)=\widetilde \Pi$. So $W''\supseteq W'\Wti=W$. 
But clearly, $S\subseteq W$, so $W''=W$. Let $\Phi=W\Pi$.

\medskip

Since 
 $\Delta \cup \widetilde{\Pi}$ is positively independent, to show that $(\Phi,\Pi)$ is a based root system, it will suffice to show that if $\alpha,\beta\in \Pi$ with $\alpha\neq \beta$, then $c:=-\mpair{\alpha,\beta}\in \mathrm{COS}$.  If both $\alpha,\beta$ are in $\Delta$, or both are in $\widetilde \Pi$, this follows since $(\Psi,\Delta)$ and $(\widetilde \Phi,\widetilde \Pi)$  are based root systems. The remaining case is that, say,  $\alpha\in \Delta$ and $\beta\in \widetilde \Pi$. We show that in this case, $c\in\mathrm{COS}':= \set{-\cos \pi/2m\mid m\in \Nat_{\geq 1}}\cup \real_{\geq 1}$.
 We have $c\geq 0$ since $\beta\in -\CC$. Also, $s_\alpha(\beta)=\beta+2c\alpha\in \widetilde \Pi$.
 If $s_\alpha(\beta)=\beta$, then $c=0\in \mathrm{COS}'$. Otherwise, $s_\alpha(\beta)\neq \beta$ are both in $\widetilde \Pi$, so $d:=-\mpair{s_\alpha(\beta),\beta}\in \mathrm{COS}$ because 
 $(\widetilde \Phi,\widetilde \Pi)$  is a  based root system.  But $d=-\mpair{\beta+2c\alpha,\beta}=
 -1+2c^2$. So $c=\sqrt{\frac{d+1}{2}}$ with $d\in \mathrm{COS}$. 
 If $d\geq 1$, say $d=\cosh \lambda$ where $\lambda\in \real$,  then $c=\cosh\frac \lambda 2\geq 1$ so $c\in \mathrm{COS}'$. Otherwise, $d=\cos\frac{\pi}{m}$ for some $m\in\Nat_{\geq 2}$, so $c=\cos\frac{\pi}{2m}\in \mathrm{COS}'$. This shows that (ii) implies (i).
 Note that $J=S\setminus I=\mset{s_\alpha\mid \alpha\in \Pi\setminus \Delta}$. The argument above also shows that no element of $I$ is $W$-conjugate to any element of $J$.

\medskip
 
 Assuming that (i) and (ii) both hold,
 the remaining assertions of the Theorem follow directly  from the  consequences of (i)--(ii) proved above. 
 \end{proof}

\bigskip

\section{Affine reflection groups}

\medskip


Let $E$ be a finite dimensional affine space over $\RM$ and assume that 
the underlying vector space $\EC$ is endowed with a positive definite 
scalar product $\mpair{,}$. If $H$ is an hyperplane in $E$, we denote by 
$s_H$ the orthogonal reflection with respect to $H$. 

Let $\AG$ be an (affine) hyperplane arrangement in $E$ and let $W$ 
be the subgroup of $O(E,\mpair{,})$ generated by $(s_H)_{H \in \AG}$. 
As in \cite[Chapter V, \S 3]{bourbaki}, we assume that the 
following hypothesis are satisfied:
\begin{quotation}
\begin{itemize}
\itemth{D1} $W$ stabilizes $\AG$.

\itemth{D2} The group $W$, endowed with the discrete topology, acts 
properly on $E$.
\end{itemize}
\end{quotation}
We can then define 
the notions of $\AG$-chambers, $\AG$-walls, $\AG$-facets, $\AG$-faces 
as defined in \cite[Chapter V, \S 1]{bourbaki}.
We fix an $\AG$-chamber $C$ and we denote by $\D$ the set of 
$\AG$-walls of $C$. 
Let $S=\{s_H~|~H \in \D\}$. Then $(W,S)$ is a Coxeter system 
and $\overline{C}$ (the closure of $C$) is a fundamental domain 
for the action of $W$ on $E$ 
(see \cite[Chapter V, \S3, Theorems 1 and 2]{bourbaki}). 

We still assume that we have a partition $S = I \dotcup J$ 
such that no element in $I$ is $W$-conjugate to an element in $J$ 
and we keep the notation of the previous sections. We set 
$$T=\{s_H~|~H \in \AG\},\quad\Delt=\{H \in \AG~|~s_H \in \Jti\},$$
$$\Tti=T \cap \Wti\quad\text{and}\quad\AGt=\{H \in \AG~|~s_H \in \Tti\}.$$
Then $\AGt$ is an hyperplane arrangement satisfying (D1) and (D2). Let
$\Cti$ be the unique $\AGt$-chamber containing $C$. Then $\Delt$ is the 
set of $\AGt$-walls of $\Cti$. We have:

\begin{proposition}\label{c}
$\DS{\overline{\Cti}=\bigcup_{w \in W_I} w(\overline{C})}$.
\end{proposition}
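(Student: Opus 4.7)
The plan is to prove the two inclusions separately, exploiting that both sides are unions of closed sets built from chambers and that $\overline{\widetilde C}$ is a strict fundamental domain for $\widetilde W$ acting on $E$ (the latter because $\widetilde{\mathfrak{A}}$ satisfies $(D_1)$--$(D_2)$, as noted just before the statement, so Bourbaki's theory applies to $(\widetilde W,\widetilde J)$).

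For the inclusion $\bigcup_{w\in W_I} w(\overline C)\subseteq\overline{\widetilde C}$, the key claim is that $W_I$ stabilizes $\widetilde C$. First, since $\widetilde W$ is normal in $W$ (Theorem~\ref{dyer}(a)), conjugation by any element of $W$ permutes $\widetilde T=T\cap\widetilde W$, hence $W$ acts on the arrangement $\widetilde{\mathfrak{A}}$ and permutes its chambers. For a generator $s\in I$, the chamber $s(C)$ is separated from $C$ by the single wall $H_s\in\mathfrak{A}$; since $s\in W_I$ and $\widetilde W\cap W_I=\{1\}$ we have $s_{H_s}=s\notin\widetilde W$, so $H_s\notin\widetilde{\mathfrak{A}}$. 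Thus $C$ and $s(C)$ lie in the same $\widetilde{\mathfrak{A}}$-chamber, which forces $s(\widetilde C)=\widetilde C$ since distinct $\widetilde{\mathfrak{A}}$-chambers are disjoint. Iterating gives $w(\widetilde C)=\widetilde C$ for every $w\in W_I$, hence $w(\overline C)\subseteq w(\overline{\widetilde C})=\overline{\widetilde C}$.

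For the reverse inclusion, take $x\in\overline{\widetilde C}$. Since $\overline C$ is a fundamental domain for $W$, there exist $w\in W$ and $c\in\overline C$ with $x=w(c)$. Using the decomposition $W=\widetilde W\rtimes W_I$ of Theorem~\ref{dyer}(a), write $w=\widetilde w\, w'$ with $\widetilde w\in\widetilde W$ and $w'\in W_I$. Set $y:=w'(c)\in w'(\overline C)$; by the first inclusion $y\in\overline{\widetilde C}$, and $x=\widetilde w(y)$, so $x$ and $y$ lie in a common $\widetilde W$-orbit and both belong to $\overline{\widetilde C}$. Applying the strict fundamental domain property of $\overline{\widetilde C}$ for $\widetilde W$ forces $x=y\in w'(\overline C)$, as desired.

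The only mildly delicate step is the first one, where one must be sure that (a) $W$ really does act on $\widetilde{\mathfrak{A}}$ (which follows cleanly from the normality of $\widetilde W$), and (b) the wall $H_s$ separating $C$ from $s(C)$ does not belong to $\widetilde{\mathfrak{A}}$ (which reduces to $s\notin\widetilde W$, immediate from the semidirect product decomposition). Once $W_I\cdot\widetilde C=\widetilde C$ is established, both inclusions follow formally from the fact that $\overline C$ and $\overline{\widetilde C}$ are (strict) fundamental domains for $W$ and $\widetilde W$ respectively.
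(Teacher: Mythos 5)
Your proof is correct and follows essentially the same route as the paper's: one inclusion via the claim that $W_I$ stabilizes $\Cti$, the other via the decomposition $w=\wti w'$ and the strict fundamental domain property of $\overline{\Cti}$ for $\Wti$. The only difference is that you justify $W_I\cdot\Cti=\Cti$ in more detail (through the separating wall $H_s\notin\AGt$), where the paper deduces it tersely from the fact that $W_I$ stabilizes $\AGt$.
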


\begin{proof}
Let $\DS{\hat{C}=\bigcup_{w \in W_I} w(\overline{C})}$. 
First, note that $W_I$ stabilizes $\AGt$, so $W_I$ stabilizes 
$\Cti$ (and $\overline{\Cti}$). 
Therefore, $\hat{C} \subseteq \overline{\Cti}$. 

Conversely, let $p \in \overline{\Cti}$. Then there exists 
$w \in W$ such that $w(p) \in \overline{C}$. Write $w=\wti x$ with 
$\wti \in \Wti$ and $x \in W_I$. Then $x(p) \in \overline{\Cti}$ 
and $\wti(x(p)) \in \overline{\Cti}$. Since $\overline{\Cti}$ is 
a fundamental domain for $\Wti$, we get that $\wti(x(p))=x(p)$. 
So $p=x^{-1}(w(p)) \in x^{-1}(\overline{C}) \subseteq \hat{C}$.
\end{proof}

\begin{corollary}\label{parafine}
Let $L$ be a subset of $\Jti$ such that $\Wti_L$ is finite. 
Then there exists a subset $K$ of $S$ and an element $d$ of 
$X_{I \cap K}^I$ such that $W_K$ is finite and 
$\Wti_L=\Wti \cap \lexp{d}{W_K}$.
\end{corollary}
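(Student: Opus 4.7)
The plan is geometric: produce a point $p\in\overline{\Cti}$ whose $\Wti$-stabilizer equals $\Wti_L$, transport $p$ into $\overline{C}$ via Proposition \ref{c}, and read off $K$ from the simple reflections of $W$ fixing the transported point.

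First, since $\Wti_L$ is a finite subgroup of $W$ and $W$ acts properly on $E$ by (D2), the fixed-point set of $\Wti_L$ is a nonempty affine subspace of $E$. I would fix a point $p$ in the relative interior of the face of $\overline{\Cti}$ associated with $L$: that is, $p\in\overline{\Cti}$ lying on every $H\in\Delt$ with $s_H\in L$ and on no other $H'\in\AGt$. Such a $p$ exists because $\Wti_L$ is finite --- this uses the standard correspondence, for a discrete reflection group acting on a Euclidean space, between finite standard parabolic subgroups and nonempty faces of the fundamental chamber, together with local finiteness of $\AGt$. By construction, $\Stab_\Wti(p)=\Wti_L$.

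Proposition \ref{c} then furnishes $w\in W_I$ with $p\in w(\overline{C})$, so $p=w(q)$ for some $q\in\overline{C}$. Setting $K:=\{s\in S\mid s(q)=q\}$, the standard fact that point stabilizers from $\overline{C}$ are standard parabolic subgroups gives $\Stab_W(q)=W_K$, and $W_K$ is finite by (D2). Hence $\Stab_W(p)=\lexp{w}{W_K}$, so intersecting with $\Wti$ yields
$$\Wti_L=\Stab_\Wti(p)=\Wti\cap\lexp{w}{W_K}.$$
To replace $w$ by an element of $X_{I\cap K}^I$, I would decompose $w=da$ with $d\in X_{I\cap K}^I$ the distinguished representative of the coset $wW_{I\cap K}$ and $a\in W_{I\cap K}\subseteq W_K$; since $a$ normalizes $W_K$, one has $\lexp{w}{W_K}=\lexp{d}{W_K}$, and therefore $\Wti_L=\Wti\cap\lexp{d}{W_K}$, completing the argument.

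The main obstacle I expect is the existence claim for $p$, namely that the face of $\overline{\Cti}$ of type $L$ is nonempty whenever $\Wti_L$ is finite. While this is a standard fact for discrete reflection groups on Euclidean space, it does not fall out of the formalism developed earlier in the paper and would require either a citation to the general theory or a short direct argument leveraging the proper action of $\Wti$ together with the fact that a finite reflection group on a Euclidean space always has a nonempty fixed set meeting the closure of its fundamental chamber.
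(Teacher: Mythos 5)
Your proposal is correct and follows essentially the same route as the paper: both obtain a point $p\in\overline{\Cti}$ with $\Stab_{\Wti}(p)=\Wti_L$ (the paper simply cites Bourbaki, Ch.~V, \S 3 for this existence step that you flag as the main obstacle), transport it into $\overline{C}$ via Proposition \ref{c}, read off $K$ from $\Stab_W$ of the transported point, and finally replace the conjugating element by the distinguished representative $d\in X_{I\cap K}^I$ of its coset modulo $W_{I\cap K}$, which does not change the conjugate of $W_K$.
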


\begin{proof}
Since $\Wti_L$ is finite, there exists $p \in \overline{\Cti}$ 
such that $\Wti_L = \Stab_{\Wti}(p)$ (see \cite[Chapter V, \S 3]{bourbaki}). 
By Proposition \ref{c}, there exists $x \in W_I$ such that 
$x(p) \in \overline{C}$. Let $K$ be the subset of $S$ such that 
$\Stab_W(x(p))=W_K$ (see \cite[Chapter V, \S 3, Proposition 1]{bourbaki}). 
Then $W_K$ is finite and $\Wti_L = \Wti \cap \lexp{x^{-1}}{W_K}$. 
Now, let $d$ be the unique element of minimal length in $x^{-1} W_{I \cap K}$. 
Then $d \in X_{I \cap K}^I$ and $\Wti_L = \Wti \cap \lexp{d}{W_K}$.
\end{proof}

\begin{corollary}\label{tres affine}
Assume that $(W,S)$ is an irreducible affine Weyl group and that 
$J \neq \vide$. Then all the irreducible components of $\Wti$ are affine. 
\end{corollary}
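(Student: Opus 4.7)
The plan is to combine the semidirect product decomposition with the transitive action of $W_I$ on irreducible components of $\Wti$, together with the classification of discrete reflection groups on Euclidean space.

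First I would establish that $\Wti$ is infinite. Since $J\neq\vide$, we have $I\subsetneq S$; because $(W,S)$ is an irreducible affine Weyl group, removal of any simple reflection leaves a Coxeter diagram of finite type, so $W_I$ is finite. The decomposition $W=\Wti\rtimes W_I$ from Theorem~\ref{dyer}(a), together with the fact that $W$ is infinite, then forces $\Wti$ to be infinite.

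Next, by Corollary~\ref{transitif}, $W_I$ permutes the irreducible components of $(\Wti,\Jti)$ transitively, so these components are pairwise isomorphic as Coxeter systems. Since $\Wti$ is the direct product of its irreducible components and is infinite, every irreducible component must be infinite.

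Finally, I would show that each infinite irreducible component is affine rather than of indefinite type. Each $\Wti_L$ is a reflection subgroup of the affine Weyl group $W$ and so acts discretely on $E$ preserving the positive definite form on $\EC$. Passing to the Tits-style linear realization of $(W,S)$, Lemma~\ref{subsystem} presents $\Wti_L$ as the Coxeter group attached to a based root system in a space carrying a positive semi-definite bilinear form, so the Gram matrix of its canonical simple roots is a principal submatrix of that form and hence positive semi-definite. In the positive semi-definite setting one automatically has $\mpair{\alpha,\beta}\ge-1$ for distinct canonical simple roots $\alpha,\beta$, so this Gram matrix actually coincides with the Coxeter form of $(\Wti_L,L)$. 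The classification of irreducible Coxeter groups by the signature of the Coxeter form (\cite[Chap.~V, \S 4]{bourbaki}) then implies $\Wti_L$ is finite or affine; since it is infinite, it is affine.

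The main obstacle is the third step: properly identifying the Coxeter form of an irreducible component with a Gram matrix in a positive semi-definite ambient realization of $W$, and invoking the classification of irreducible Coxeter groups by the signature of the Coxeter form. Once these standard facts are in place, the argument is essentially formal.
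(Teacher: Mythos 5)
Your proof is correct, but it takes a genuinely different route from the paper's. The paper's argument is a two-line geometric one built on Proposition \ref{c}: since $(W,S)$ is irreducible affine and $I\varsubsetneq S$, the group $W_I$ is finite, so $\overline{\Cti}=\bigcup_{w\in W_I}w(\overline{C})$ is a finite union of compact sets, hence a compact fundamental domain for $\Wti$ acting on $E$; a discrete Euclidean reflection group whose closed chamber is compact can have no finite irreducible component (such a component would contribute an unbounded cone direction to the chamber), so every component is affine. You instead argue linearly: Corollary \ref{transitif} makes the components pairwise isomorphic, hence all infinite because $\Wti$ is infinite ($W$ infinite, $W_I$ finite); then you identify the Coxeter form of each component with the Gram matrix of its canonical simple roots inside the positive semi-definite linear realization of $(W,S)$ (via Lemma \ref{subsystem}, together with the observation that positive semi-definiteness forces $\mpair{\alpha,\beta}\ge -1$ for distinct unit roots, which pins the off-diagonal entries down to $-\cos(\pi/m_{s_\alpha,s_\beta})$), and you invoke the signature classification. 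Both arguments are complete; the paper's is shorter and exploits the affine-space machinery of its Section 4, while yours is purely root-theoretic, makes the ``all components are infinite'' step explicit via the transitivity of the $W_I$-action, and substitutes the Bourbaki signature criterion for the structure theory of Euclidean reflection groups with compact fundamental domain. (Only a phrasing quibble: the Gram matrix of $\Delta$ is positive semi-definite because it is the Gram matrix of vectors in a positive semi-definite space, not literally a ``principal submatrix of the form''.)
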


\begin{proof}
Since $(W,S)$ is affine and irreducible and $I \varsubsetneq S$, 
the group $W_I$ is finite. Therefore, $\overline{\Cti}$ is compact 
and the result follows.
\end{proof}

\bigskip

\noindent{\sc Remark - } The two previous corollaries could have been 
shown using the classification and the Table given at the end of this paper. 

\bigskip

\section{Finite Coxeter groups} 

\medskip

In this section, and only in this section, we assume that 
$W$ is {\it finite}. We shall relate here 
the semidirect product decomposition with other constructions 
which are particular to the finite case: invariants, Solomon 
algebra. We first start by an easy result:

\begin{proposition}\label{j=s}
If $(W,S)$ is finite and irreducible and if $J \neq \vide$, then 
$|\Jti|=|S|$.
\end{proposition}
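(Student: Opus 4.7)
The plan is to realize $(W,S)$ geometrically and exploit irreducibility of the reflection representation. I would take $V$ to be the standard reflection representation of $(W,S)$ on a real vector space of dimension $|S|$, equipped with a $W$-invariant positive definite inner product; because $(W,S)$ is finite and irreducible, $V$ is an irreducible $\RM W$-module.

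Next I would consider the subspace $V^{\Wti}$ of $\Wti$-fixed vectors. Since $\Wti$ is normal in $W$ by Theorem \ref{dyer}(a), this subspace is stable under the full group $W$. The hypothesis $J\neq\vide$ guarantees $\Wti\neq\set{1}$, so $V^{\Wti}\neq V$; irreducibility of $V$ as a $W$-module then forces $V^{\Wti}=0$.

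From here I would argue that the orthogonal complement of $V^{\Wti}$ in $V$ is precisely the $\RM$-span of the roots of the reflections lying in $\Wti$, hence those roots span all of $V$. Consequently $\Wti$ acts on $V$ as a finite real reflection group whose root system has rank equal to $\dim V = |S|$. Combining this with Theorem \ref{dyer}(c), which identifies $\Jti$ with the canonical set $\chi(\Wti)$ of Coxeter generators, yields $|\Jti| = |S|$.

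The only step requiring attention is this last identification: that for a finite real reflection group acting essentially on $V$, the cardinality of its canonical set of Coxeter generators equals $\dim V$. This is a standard fact, obtained by choosing a system of simple roots for the $\Wti$-root subsystem of $\Phi$ inside $V$ (cf.\ Lemma \ref{subsystem}), which is simultaneously an $\RM$-basis of $V$ and in natural bijection with $\chi(\Wti)=\Jti$.
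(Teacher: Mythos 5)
Your proof is correct, and it shares the paper's overall strategy (realize $(W,S)$ geometrically and exploit irreducibility of the reflection representation), but the two halves of the counting argument are handled differently. For the lower bound $|\Jti|\ge|S|$, the paper shows directly that the span of $\Piwt=\{w(\a)\mid w\in W_I,\ \a\in\Pi_J\}$ is stable under both $W_I$ and $\Wti$, hence under $W$, and concludes by irreducibility; you instead pass through the fixed space $V^{\Wti}$, using normality of $\Wti$ to see it is $W$-stable and hence zero, and then identify $(V^{\Wti})^\perp$ with the span of the roots of $\Wti$ --- an equivalent but slightly slicker route. The real divergence is in the upper bound $|\Jti|\le|S|$: the paper proves linear independence of $\Piwt$ by hand, using the obtuseness relation $\mpair{\a,\b}\le 0$ for distinct $\a,\b\in\Piwt$ (extracted from the inequality $(*)$ in the proof of Theorem \ref{alternative}) together with positive definiteness of the form, whereas you appeal to the classical fact that a simple system of a finite, essentially acting reflection group is a linearly independent set of cardinality equal to $\dim V$. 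That appeal is legitimate here because $\Wti$ is finite and its canonical simple system (in the sense of Lemma \ref{subsystem}) coincides with a classical simple system for the root subsystem $\Psi$; but note that the standard proof of that classical fact is essentially the same computation the paper carries out explicitly. So your argument buys brevity by citing finite reflection group theory, while the paper's is self-contained and, via $(*)$, ties the independence of $\Piwt$ to the structure already established in Theorem \ref{alternative}.
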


\noindent{\sc Remark - } 
Of course, the above proposition is easily checked using the 
classification (see the Table at the end of this paper). We shall 
provide here a general proof. As it is also shown by this table, 
the proposition is no longer true in general if we do not assume that 
$W$ is finite.

\begin{proof}
We assume without loss of generality that $(W,S)$
is the Coxeter system associated to a  based root system $(\Phi,\Pi)$ such that $\Pi$ is linearly independent. We keep the notation of the 
proof of Theorem \ref{alternative} ($\Pi_J$, $\Piwt$...). 

First, $\Piwt \subseteq \Phi^+$. Let $(\l_\a)_{\a \in \Piwt}$ 
be a family of real numbers such that $\sum_{\a \in \Piwt} \l_\a \a = 0$. 
Let $x=\sum_{\a \in \Piwt} |\l_\a|~\a$. 
Since $\mpair{\a,\b} \le 0$ if $\a$, $\b \in \Piwt$ 
(see $(*)$ in the proof of Theorem \ref{alternative}) 
and since $\mpair{,}$ is positive definite, we get that $x=0$ because 
$$\mpair{x,x} \le 
\langle \sum_{\a \in \Piwt} \l_\a \a,\sum_{\a \in \Piwt} \l_\a \a \rangle =0.$$
But $\Piwt$ is positively independent, so we get that $\l_\a=0$ for 
all $\a \in \Piwt$. Therefore, $\Piwt$ is linearly independent, 
so $|\Piwt| \le |S|$. 

Since $|\Jti|=|\Piwt|$, it remain to show that $|\Piwt| \ge |S|$ or, 
in other words, that $\Piwt$ generates $\EC$. Let $\EC'$ be the subspace 
generated by $\Piwt$. It is $W_I$-stable by definition of $\Piwt$ 
and it is $\Wti$-stable since $\Wti$ is generated by the (orthogonal) 
reflections $(s_\a)_{\a \in \Piwt}$. So $\EC'$ is $W$-stable by Theorem 
\ref{dyer} (a). Since $\EC$ is an irreducible $W$-module and since 
$\Piwt \not= \vide$, we get that $\EC'=\EC$, as expected.
\end{proof}

\noindent{\bf Invariant theory.} 
Keep the notation of the proof of the Proposition \ref{j=s}. 
We view $\EC$ as an algebraic variety over $\RM$. The group 
$W/\Wti\simeq W_I$ acts linearly on the tangent space $\TC$ to $\EC/\Wti$ 
at $0$. Since $\Wti$ is finite and generated by reflections, 
this tangent space has dimension $\dim \EC$ (since $\EC/\Wti$ is smooth). 
Moreover, by \cite[Theorems 3.2 and 3.12, Proposition 3.5]{BBR}, we have:

\begin{proposition}\label{quotient}
The group $W_I$ acts (faithfully) as a reflection group on $\TC$: 
a reflection in $W_I$ acts as a reflection on $\TC$.
\end{proposition}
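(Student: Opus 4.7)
The plan is to derive the proposition from standard facts about finite group actions on smooth varieties, combined with the Chevalley--Shephard--Todd theorem. Since $\Wti$ is a finite reflection group, Chevalley--Shephard--Todd gives $\RM[\EC]^{\Wti}=\RM[f_1,\ldots,f_n]$ as a polynomial algebra on $n=\dim_{\RM}\EC$ homogeneous generators; consequently $\EC/\Wti$ is smooth at $0$ and $\dim_{\RM}\TC=n$. Because $W_I$ normalizes $\Wti$ (by Theorem \ref{dyer}(a)) and fixes $0\in \EC$, it acts on $\EC/\Wti$ fixing the image of $0$, and differentiating at that fixed point yields a linear representation $\rho\colon W_I\to \mathrm{GL}(\TC)$.

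I would next establish that $\rho$ is faithful. The action of $W_I$ on $\EC/\Wti$ is already faithful: if $w\in W_I$ acts trivially on $\EC/\Wti$, then for any $x$ in the dense open locus where $\Stab_{\Wti}(x)=\{1\}$, the equation $w(x)=w_x(x)$ determines $w_x\in \Wti$ uniquely, and $w_x$ is forced to be constant on that locus by algebraicity, giving $w=w_x\in W_I\cap \Wti=\{1\}$. By Bochner's linearization theorem---finite group actions at a smooth fixed point are analytically (indeed formally) conjugate to their linearization---any $w\in W_I$ with $\rho(w)=1$ acts trivially on a neighborhood of $\pi(0)$, hence by algebraicity on all of $\EC/\Wti$; faithfulness of that action then forces $w=1$.

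The final step is to check that each reflection $s\in I$, with reflection hyperplane $H_s\subset \EC$, is sent by $\rho$ to a reflection on $\TC$. Writing $\pi\colon \EC\to \EC/\Wti$ for the quotient map, the image $\pi(H_s)$ has dimension $n-1$ (since $\pi$ has generically finite fibres), and it is contained in the fixed locus of $s$ on $\EC/\Wti$ because $s$ pointwise fixes $H_s$. Applying the linearization theorem again, the fixed locus of the involution $s$ on the smooth variety $\EC/\Wti$ is smooth near $\pi(0)$, of dimension equal to $\dim \TC^{s}$. This forces $\dim \TC^{s}\ge n-1$, and since $\rho(s)\neq 1$ by faithfulness, $\dim \TC^{s}=n-1$, so $\rho(s)$ is an order-two element with an $(n-1)$-dimensional fixed subspace, i.e.\ a reflection on $\TC$.

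The only nontrivial analytic ingredient here is the linearizability of finite group actions at a smooth fixed point; everything else is bookkeeping combining the Chevalley--Shephard--Todd identification of $\EC/\Wti$ with affine $n$-space with the semidirect product decomposition of Theorem \ref{dyer}(a). This is the content of \cite[Theorems 3.2 and 3.12, Proposition 3.5]{BBR} cited in the statement, which can also be invoked directly in lieu of the sketch above.
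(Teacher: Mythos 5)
Your argument is correct, but there is nothing in the paper to compare it against at the level of proof: the paper gives no proof of Proposition \ref{quotient} at all and simply invokes \cite[Theorems 3.2 and 3.12, Proposition 3.5]{BBR} (after noting that $\EC/\Wti$ is smooth with tangent space of dimension $\dim\EC$ at $0$, by Chevalley--Shephard--Todd, and that $W_I\simeq W/\Wti$ acts linearly on that tangent space). So your sketch is best viewed against the argument in \cite{BBR}: there the linearization is carried out algebraically, by exploiting the grading on $\RM[\EC]^{\Wti}=\RM[f_1,\dots,f_n]$ and passing to $\mathfrak{m}/\mathfrak{m}^2$, and the reflection property is extracted from invariant theory (the converse of Chevalley--Shephard--Todd applied to $(\EC/\Wti)/W_I=\EC/W$, which is again smooth), whereas you linearize analytically via Bochner's theorem and get the reflection property from a dimension count on fixed loci. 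Both routes work; yours trades the graded-algebra bookkeeping for one analytic input (equivariant linearization at a smooth fixed point) plus semialgebraic dimension theory for $\pi(H_s)$. Two small points where your write-up should be tightened. First, in the faithfulness step, ``$w_x$ is forced to be constant on that locus by algebraicity'' is slightly off as stated, since over $\RM$ the free locus is disconnected into chambers in the Euclidean topology; the clean argument is that the finitely many linear subspaces $\ker(w-u)$, $u\in\Wti$, cover a Zariski-dense subset of $\EC$, hence one of them is all of $\EC$, giving $w=u\in W_I\cap\Wti=\{1\}$. Second, the proposition asserts that \emph{every} reflection of $W_I$ (not only the elements of $I$) acts as a reflection on $\TC$; your hyperplane-image argument applies verbatim to any reflection $t\in W_I$ with reflecting hyperplane $H_t$, so you should state it at that level of generality.
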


\noindent{\sc Remark - } In \cite{BBR}, the authors have investigated 
the links between different objects associated to the invariant theory 
of $W$ and $W/\Wti \simeq W_I$: degrees, hyperplane arrangements, 
fake degrees, regular elements...

\bigskip

\noindent{\bf Solomon descent algebra.} 
If $K \subseteq S$ and $L \subseteq \Jti$, we set
$$x_K = \sum_{w \in X_K} w \in \QM W,\qquad
\xti_L=\sum_{w \in \Xti_L} w \in \QM \Wti,$$
$$x_L=\sum_{w \in X_L} w \in \QM W.
\leqno{\text{and}}$$
The {\it Solomon descent algebra} $\Sigma(W)$ of $W$ is defined by 
$$\Sigma(W)=\mathop{\oplus}_{K \subseteq S} \QM x_K$$
(see \cite{solomon}). It turns out that it is a subalgebra of the group 
algebra $\QM W$. Similarly, we set
$$\Sigma(\Wti)=\mathop{\oplus}_{L \subseteq \Jti} \QM \xti_L.$$
We then define a $\QM$-linear map 
$$\restilde : \Sigma(W) \longto \Sigma(\Wti)$$
by 
$$\restilde(x_K)=\sum_{d \in X_{I \cap K}^I} \xti_{\Jti \cap \lexp{d}{W_K}}
\qquad(=\sum_{d \in X_{I \cap K}^I} \xti_{\lexp{d}{K^+}})$$
for all $K \subseteq S$. 

\begin{proposition}\label{morphisme anneaux}
The map $\restilde : \Sigma(W) \to \Sigma(\Wti)$ is a morphism of 
$\QM$-algebras. 
\end{proposition}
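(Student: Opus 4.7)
The plan is to prove directly that $\restilde(x_K \cdot x_L) = \restilde(x_K) \cdot \restilde(x_L)$ for all $K, L \subseteq S$, by expanding both sides in the basis $\{\xti_M\}_{M \subseteq \Jti}$ of $\Sigma(\Wti)$ and matching the resulting terms through a combinatorial bijection.

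For the left-hand side, I would first invoke Solomon's product formula in $(W, S)$ (see \cite{solomon}) to write $x_K x_L$ as a sum of basis elements $x_{L \cap d^{-1} K d}$ indexed by minimum-length representatives $d$ of the $(W_K, W_L)$-double cosets in $W$, and then apply $\restilde$ termwise. This produces a double sum over pairs $(d, e)$ with $e \in X_{I \cap (L \cap d^{-1} K d)}^{I}$, whose $\xti$-label is $\lexp{e}{(L \cap d^{-1} K d)^+}$. For the right-hand side, I would substitute the defining formula of $\restilde$ and then apply Solomon's product formula in $(\Wti, \Jti)$ (which is a legitimate Coxeter system by Theorem \ref{dyer}), obtaining a triple sum over $(d_1, d_2, \tilde{d})$ with $d_1 \in X_{I \cap K}^I$, $d_2 \in X_{I \cap L}^I$ and $\tilde{d}$ a minimum-length representative of a double coset in $\Wti_{\lexp{d_1}{K^+}} \backslash \Wti / \Wti_{\lexp{d_2}{L^+}}$, whose $\xti$-label is $\lexp{d_2}{L^+} \cap \tilde{d}^{-1} \lexp{d_1}{K^+} \tilde{d}$.

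The core combinatorial step is to construct a bijection between these two index sets which preserves the associated subset of $\Jti$. The natural candidate arises from the semidirect product structure $W = \Wti \rtimes W_I$: given a triple $(d_1, d_2, \tilde{d})$, I would form a suitable element of $W$ and extract the minimum-length representative of its $(W_K, W_L)$-double coset, recovering $(d_1, d_2)$ from its image under the homomorphism $\ph : W \to W_I$ built in the proof of Theorem \ref{dyer} and encoding $\tilde{d}$ in the residual $\Wti$-information. Proposition \ref{double} then serves as the key dictionary: it canonically identifies $\Wti \cap \lexp{d}{W_K}$ with the parabolic subgroup $\Wti_{\lexp{d}{K^+}}$ of $(\Wti, \Jti)$ and parametrises the double cosets $\Wti \backslash W / W_K$ by $X_{I \cap K}^I$.

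The main technical obstacle will be verifying that the $\xti$-labels match under this bijection, namely that $\lexp{e}{(L \cap d^{-1} K d)^+}$ coincides with $\lexp{d_2}{L^+} \cap \tilde{d}^{-1} \lexp{d_1}{K^+} \tilde{d}$ inside $\Jti$. This identification will rest on repeated use of Proposition \ref{inter parabolique}, on the normality of $\Wti$ in $W$, and on the $W_I$-invariance of $\ellt$ from Corollary \ref{longueurs ajoutees}. As a useful consistency check, I would also verify that the square relating $\restilde$ to the restriction of characters $\Res^W_{\Wti}$ via the Solomon characteristic maps $\Sigma(W) \to R(W)$ and $\Sigma(\Wti) \to R(\Wti)$ commutes, which follows from Mackey's formula together with Proposition \ref{double}; however, this observation alone does not yet establish multiplicativity, as the characteristic map $\Sigma(\Wti) \to R(\Wti)$ need not be injective.
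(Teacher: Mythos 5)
Your overall strategy---expand both $\restilde(x_Kx_L)$ and $\restilde(x_K)\restilde(x_L)$ via Solomon's product formula in $(W,S)$ and in $(\Wti,\Jti)$ respectively, then match terms by a bijection of index sets---is plausible in principle, but as written it has a genuine gap: the bijection between the pairs $(d,e)$ and the triples $(d_1,d_2,\tilde d)$ is never actually constructed (you only describe a ``natural candidate''), and the identity of labels $\lexp{e}{(L\cap d^{-1}Kd)^+}=\lexp{d_2}{L^+}\cap \tilde d^{-1}\,\lexp{d_1}{K^+}\,\tilde d$ is explicitly deferred as ``the main technical obstacle.'' That obstacle \emph{is} the proof: everything else in your outline is routine bookkeeping, and there is no indication of how one would control the interaction between minimal double coset representatives in $W$ and those in $\Wti$ (note that minimal representatives for $(\Wti_{M_1},\Wti_{M_2})$-double cosets with respect to $\ellt$ need not be related in any obvious way to minimal $(W_K,W_L)$-double coset representatives with respect to $\ell$). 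So the proposal is a plan whose decisive step is missing, not a proof.

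For comparison, the paper avoids all of this combinatorics with a single identity. Setting $z=\sum_{w\in W_I}w$, one proves $z\,\restilde(x)=xz$ for every $x\in\Sigma(W)$ (this reduces to the basis case $x=x_K$, where it follows from the factorization $z=\sum_{d\in X_{I\cap K}^I}z'd^{-1}$ with $z'=\sum_{w\in W_{I\cap K}}w$, the bijection $X_K\times W_{I\cap K}\to X_{K^+}$ coming from Lemma \ref{dyer X}(c), and the fact that $W_I$ acts on the pair $(\Wti,\Jti)$). Multiplicativity is then immediate: $z\,\restilde(xy)=xyz=x\,z\,\restilde(y)=z\,\restilde(x)\restilde(y)$, and left multiplication by $z$ is injective on $\ZM\Wti$ because $W=\Wti\rtimes W_I$. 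If you want to salvage your approach, you would essentially have to reprove this identity in disguise; I would recommend instead looking for a one-line reduction of the form ``$\restilde$ is intertwined with an injective map,'' which is exactly what the element $z$ provides. Your closing remark about the commutative square with $\Res^W_{\Wti}$ is correct, and you are right that it cannot substitute for a proof of multiplicativity.
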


\begin{proof}
Let $z=\sum_{w \in W_I} w$. We shall first show that, for $x \in \Sigma(W)$, 
\begin{equation}\label{conjugaison bizarre}
 z\restilde(x)=xz.
\end{equation}
For this, we may assume that $x=x_K$ for some $K \subseteq S$. Let 
$z'=\sum_{w \in W_{I \cap K}} w$. Since 
$$z=\sum_{d \in X_{I \cap K}^I} z' d^{-1},$$
we get 
$$x_K z = (x_K z') \cdot \sum_{d \in X_{I \cap K}^I} d^{-1}.$$
But $W_{I \cap K}$ is the set of elements $w \in W_K$ of minimal length  in 
$w \Wti_{K^+}$. So it follows from Lemma \ref{dyer X} (c) that 
the map 
$$\fonctio{X_K \times W_{I \cap K}}{X_{K^+}}{(x,w)}{xw}$$
is bijective. So $x_K z' = x_{K^+} = z \xti_{K^+}$. Therefore,
$$x_K z=z \sum_{d \in X_{I \cap K}^I} \xti_{\Jti \cap W_K} d^{-1}.$$
But $zd=z$ for all $d \in X_{I \cap K}^I$, so 
$$x_K z=z\sum_{d \in X_{I \cap K}^I} d \xti_{\Jti \cap W_K} d^{-1}.$$
Since $W_I$ acts on the pair $(\Wti,\Jti)$, we have 
$d \Xti_{\Jti \cap W_K} d^{-1} = \Xti_{\Jti \cap \lexp{d}{W_K}}$, 
so \ref{conjugaison bizarre} follows. 

\medskip

Since the map $\ZM \Wti \to \ZM W$, $u \mapsto zu$ is injective, 
we get immediately from \ref{conjugaison bizarre} that $\restilde$ 
is a morphism of rings.
\end{proof}

Note that the group $W_I$ acts by conjugation on the descent 
algebra of $\Wti$.

\begin{corollary}\label{image fixe}
The image of $\restilde$ is $\Sigma(\Wti)^{W_I}$.
\end{corollary}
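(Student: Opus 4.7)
The plan is to establish the two inclusions separately.

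For $\mathrm{im}(\restilde)\subseteq \Sigma(\Wti)^{W_I}$, I would use the identity $z\restilde(x)=xz$ from the proof of Proposition \ref{morphisme anneaux} (where $z=\sum_{w\in W_I}w$), together with the injectivity of left multiplication by $z$ on $\QM\Wti$ already noted there. Given $x\in\Sigma(W)$ and $y\in W_I$, one has $yz=zy=z$, and $y\restilde(x)y^{-1}\in\Sigma(\Wti)$ since $W_I$ permutes the subsets of $\Jti$ by conjugation while preserving $\ellt$ (so $y\xti_L y^{-1}=\xti_{yLy^{-1}}$). The chain
\[ z\cdot y\restilde(x)y^{-1}=(zy)\restilde(x)y^{-1}=z\restilde(x)y^{-1}=xzy^{-1}=xz=z\restilde(x)\]
then forces $y\restilde(x)y^{-1}=\restilde(x)$ by injectivity of left multiplication by $z$, so $\restilde(x)\in\Sigma(\Wti)^{W_I}$.

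For the reverse inclusion, the key claim is that every $W_I$-orbit on $\mathcal{P}(\Jti)$ has the form $[K^+]$ for some $K\subseteq S$. Given $L\subseteq\Jti$, the reflection subgroup $\Wti_L$ is finite because $W$ is. I would apply Corollary \ref{parafine} — whose hypotheses (D1)--(D2) hold trivially for $W$ acting on its standard geometric realization as a finite reflection group — to obtain $K\subseteq S$ and $d\in X_{I\cap K}^I$ with $\Wti_L=\Wti\cap\lexp{d}{W_K}$. The last assertion of Proposition \ref{double} rewrites this as $\Wti_L=\Wti_{\Jti\cap\lexp{d}{W_K}}$; since $d\in W_I$ normalizes $\Jti$ and $\Jti\cap W_K=K^+$ by Proposition \ref{inter parabolique}, this becomes $\Wti_L=\Wti_{\lexp{d}{K^+}}$. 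Uniqueness of canonical Coxeter generators (Theorem \ref{dyer}(c)) then forces $L=\lexp{d}{K^+}$, so $[L]=[K^+]$ because $d\in W_I$.

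Finally, I would observe that $\restilde(x_K)$ collapses to a positive scalar multiple of the orbit sum $\sum_{L'\in[K^+]}\xti_{L'}$: as $d$ ranges over $X_{I\cap K}^I$, the subset $\lexp{d}{K^+}$ depends only on the coset of $d$ modulo $\Stab_{W_I}(K^+)$ — which contains $W_{I\cap K}$ by the very definition of $K^+$ — and so each element of the $W_I$-orbit of $K^+$ is hit with the same positive multiplicity $|\Stab_{W_I}(K^+)|/|W_{I\cap K}|$. Since the orbit sums form a $\QM$-basis of $\Sigma(\Wti)^{W_I}$, combining this computation with the orbit-realization claim yields $\Sigma(\Wti)^{W_I}\subseteq\mathrm{im}(\restilde)$. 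The main obstacle is the orbit-realization claim, whose resolution here reduces cleanly to an application of Corollary \ref{parafine}.
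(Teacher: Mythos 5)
Your proposal is correct and follows essentially the same route as the paper: the forward inclusion via the identity $z\restilde(x)=xz$ together with $zy=z$ for $y\in W_I$ and injectivity of left multiplication by $z$, and the reverse inclusion by realizing every $W_I$-orbit on $\mathcal{P}(\Jti)$ as the orbit of some $K^+$ via Corollary \ref{parafine} and Propositions \ref{inter parabolique} and \ref{double}, then identifying $\restilde(x_K)$ as a positive multiple of the corresponding orbit sum. Your added justifications (that $\Wti_L=\Wti_{\lexp{d}{K^+}}$ forces $L=\lexp{d}{K^+}$, and the exact multiplicity $|\Stab_{W_I}(K^+)|/|W_{I\cap K}|$) are points the paper leaves implicit, but the argument is the same.
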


\begin{proof}
Let $x \in \Sigma(W)$ and $w \in W_I$. Then, by \ref{conjugaison bizarre}, 
$$z\cdot \lexp{w}\restilde(x)=zw\restilde(x)w^{-1} = 
z\restilde(x)w^{-1} = xzw^{-1} = xz = z \restilde{x},$$
so $\restilde(x)=\lexp{w}{\restilde(x)}$. This shows that 
the image of $\restilde$ is contained in $\Sigma(\Wti)^{W_I}$. 

Conversely, we need to show that, for all $L \subseteq \Jti$, 
the element $u=\sum_{w \in W_I} \xti_{wLw^{-1}}$ is in the image 
of $\restilde$. But, by Corollary \ref{parafine}, there exists 
$K \subseteq S$ and $d \in X_{I \cap K}^I$ such that 
$\Wti_L = \Wti \cap \lexp{d}{W_K}$. Then $L=d K^+ d^{-1}$ and 
$$\restilde(x_K) = \sum_{x \in X_{I \cap K}^I} \xti_{\lexp{x}{K^+}}
= \frac{1}{|W_{I \cap K}|} u,$$
as desired. 
\end{proof}

The Solomon descent algebra $\Sigma(W)$ 
is endowed with a morphism of $\QM$-algebras 
$\theta : \Sigma(W) \longto \QM \Irr W$, where $\QM \Irr W$ denotes 
the algebra of $\QM$-linear combinations of irreducible characters 
of $W$ (with usual product). The map $\th$ is defined by 
$$\th(x_K)=\Ind_{W_K}^W 1_K$$
for all $K \subseteq S$ (here, $1_K$ denotes the trivial character of $W_K$). 
Similarly, we have a morphism of $\QM$-algebras 
$\tilde{\th} : \Sigma(\Wti)\to \QM \Irr \Wti$ and the Mackey formula 
shows immediately that the diagram
\begin{equation}\label{diag}
\diagram
\Sigma(W) \rrto^{\DS{\th}} \ddto_{\DS{\restilde}} && 
\QM\Irr W \ddto^{\DS{\Res_\Wti^W}} \\
&&\\
\Sigma(\Wti) \rrto^{\DS{\tilde{\th}}} && \QM\Irr \Wti
\enddiagram 
\end{equation}
is commutative.

\bigskip

\noindent{\sc Remark - } In \cite[\S 5.2]{BP}, the authors have 
defined the map $\restilde$ whenever $W$ is of type $B_n$ and 
$\Wti$ is of type $D_n$ (it was denoted by ${\mathrm{Res}}_n$). 
In this particular case, Proposition \ref{morphisme anneaux}, 
Corollary \ref{image fixe} and the commutativity of the diagram 
\ref{diag} have been shown in \cite[Proposition 5.9]{BP}. 

\bigskip

\section{Examples}
We shall  describe in detail some examples of (internal)   semidirect 
product decompositions of 
Coxeter systems $(W,S)$. If $\D$ is a Coxeter graph, we shall denote by 
$W(\D)$ the 
associated Coxeter group. In the following table, we have drawn the diagram of 
$(W,S,{\boldsymbol{I}})$ by marking with {\it black} nodes the elements 
of $I$. The elements of $\Jti$ and their reduced expressions have been obtained using the bijection \eqref{j tilde} and Proposition \ref{red exp} (b).  The Coxeter graph of $(\Wti,\Jti)$ is obtained
from Theorem \ref{gal2}, and the action of the Coxeter generators 
$I$ of $W_I$ by diagram automorphisms of the Coxeter graph of $(\Wti,\Jti)$ may be determined  using Proposition \ref{red exp} (a).  

\medskip

The table contains all possible triples $(W,S,I)$ where $W$ is a finite Coxeter 
group or an affine Weyl group and is irreducible and $I$ is a proper non-empty 
subset of $S$. (For compactness, we include $\widetilde{A_1}$  as $I_2(\infty)$).
 In degenerate cases, that is, for small values of $|S|$, 
the diagram for $(\Wti,\Jti)$ given in the table is not correct, 
but the semidirect product 
decomposition is still correct (see the marks (1), (2) and (3) in the table). 
Here are some detailed explanations:
\begin{itemize}
\itemth{1} If $W$ is of type $\widetilde{B_3}$, then, since 
$D_3=A_3$, we have $\widetilde{D}_3=\widetilde{A}_3$. So the correct Coxeter 
graph of $(\Wti,\Jti)$ is a square of this form
\begin{center}
\begin{picture}(70,50)
\put(20,5){\circle{10}}\put(25,5){\line(1,0){20}}
\put(20,35){\circle{10}}\put(20,30){\line(0,-1){20}}
\put(50,5){\circle{10}}\put(25,35){\line(1,0){20}}
\put(50,35){\circle{10}}\put(50,30){\line(0,-1){20}}
\put(1,5){$s_3$}
\put(-6,35){$ts_1t$}
\put(58,5){$s_1$}
\put(58,35){$s_2$}
\end{picture}
\end{center}

\medskip

\itemth{2} If $W$ is of type $\widetilde{C}_2$, then, since 
$B_2=C_2$, we have $\widetilde{B}_2=\widetilde{C}_2$. So the correct Coxeter 
graph of $(\Wti,\Jti)$ is of the following form 
\begin{center}
\begin{picture}(80,25)
\put(10,10){\circle{10}}\put(14,13){\line(1,0){22}}\put(14,7){\line(1,0){22}}
\put(40,10){\circle{10}}\put(44,13){\line(1,0){22}}\put(44,7){\line(1,0){22}}
\put(70,10){\circle{10}}
\put(5,19){$s_1$}
\put(37,19){$t'$}
\put(62,19){$ts_1t$}
\end{picture}
\end{center}

\medskip

\itemth{3} For the diagram marked (3) in the table, there are two values 
of $n$ for which the graph degenerates: if $n=2$, then $D_2= A_1 \times A_1$ 
(this is a standard convention) and so $\widetilde{D}_2=\widetilde{A}_1 \times \widetilde{A}_1$ 
and, if $n=3$, then $D_3=A_3$ so again $\widetilde{D}_3=\widetilde{A}_3$ is a square. We obtain 
the following diagrams:
\begin{center}
\begin{picture}(220,70)
\put(30,52){$\widetilde{D}_2$}
\put(20,5){\circle{10}}\put(25,5){\line(1,0){20}}
\put(20,35){\circle{10}}
\put(50,5){\circle{10}}\put(25,35){\line(1,0){20}}
\put(50,35){\circle{10}}
\put(30,8){$\infty$}
\put(30,38){$\infty$}
\put(-6,5){$ts_1t$}
\put(1,35){$s_1$}
\put(58,5){$t's_1t'$}
\put(58,35){$tt's_1t't$}

\put(180,52){$\widetilde{D}_3$}
\put(170,5){\circle{10}}\put(175,5){\line(1,0){20}}
\put(170,35){\circle{10}}\put(170,30){\line(0,-1){20}}
\put(200,5){\circle{10}}\put(175,35){\line(1,0){20}}
\put(200,35){\circle{10}}\put(200,30){\line(0,-1){20}}
\put(151,5){$s_2$}
\put(144,35){$ts_1t$}
\put(208,5){$s_1$}
\put(208,35){$t's_2t'$}
\end{picture}
\end{center}
\end{itemize}

\medskip 
We next explain the notation $t_i$ and $t_i'$ in the 
Coxeter graphs marked (a), (b), (c) and (d) in the table.

\medskip

\begin{itemize}
\itemth{a} Here, $t_1=t$ and $t_{i+1}=s_it_is_i$ ($1 \le i \le n-1$).

\itemth{b} Here, $t_1=t$ and $t_{i+1}=s_it_is_i$ ($1 \le i \le n-1$), 
$t_n'=s_n t_{n-1} s_n$ and $t_i'=s_i t_{i+1}' s_i$ ($1 \le i \le n-1$).

\itemth{c} Here, $t_1=t$ and $t_{i+1}=s_it_is_i$ ($1 \le i \le n-1$), 
$t_n'=t'$ and $t_i'=s_i t_{i+1}' s_i$ ($1 \le i \le n-1$).

\itemth{d} Here, $t_1=t$ and $t_{i+1}=s_it_is_i$ ($1 \le i \le n-1$), 
$t_n'=t' t_n t'$ and $t_i'=s_i t_{i+1}' s_i$ ($1 \le i \le n-1$).
\end{itemize}

\medskip 
Finally, it remains to describe  the $W_I$-action by
automorphisms of $(\Wti,\Jti)$. This may be done by 
describing the automorphism of the Coxeter graph given by the
simple reflections $I$ of $W_I$. Each $s\in I$ acts by conjugation 
on the vertex set $\Jti$ of the Coxeter graph, and in most cases
the action is clear by inspection of the graph. It may be specified by giving
 the induced permutation of the vertex set $\Jti$ of the Coxeter graph.  For example,
 in type $\widetilde{G}_2$ with $I=\set{s_1,s_2}$, the action is given by
 $s_1\mapsto (t,s_1ts_1)$ and $s_2\mapsto (s_1ts_1, s_2s_1ts_1s_2)$
 where the image permutations are written in disjoint cycle notation.
 We will not explicitly list the action in the cases  in which it is obvious by inspection.

\medskip

 The four graphs in the table (or amongst the degenerate graphs discussed above)
 for which the action is perhaps not obvious by inspection are again 
 those designated (a), (b), (c) and (d). For these, the actions  of $W_{I}$ are as follows:
 
 \medskip
 
 \begin{itemize}
\itemth{a} Here, $s_i\mapsto (t_i,t_{i+1})$ for $1\le i\leq n-1$. 

\itemth{b} Here,  $s_i\mapsto (t_i,t_{i+1})(t'_i,t'_{i+1})$ for 
$1\le i\le  n-1$, and 
$s_{n}\mapsto (t_{n-1}, t_n')(t_{n-1}',t_n)$.

\itemth{c} Here, $s_i\mapsto (t_i,t_{i+1})(t_i',t_{i+1}')$ for 
$1\le i\le n-1$.

\itemth{d} Here,  $s_i\mapsto (t_i,t_{i+1})(t'_i,t'_{i+1})$ 
for $1\le i\le n-1$ and $s_n\mapsto (t_n,t_n')$.

\end{itemize}

\medskip
 The resulting permutation representation of $W_I$ is in each case  (a)--(d) isomorphic
 in an obvious way to a standard permutation representation of the classical Weyl group $W_I$ as a group of permutations or signed permutations.

\newpage
\def\espace{\vphantom{\DS{\frac{A}{A}}}}

$$\begin{array}{|c|c|c|c|}
\hline
\espace\text{Type} & \text{Graph of $(W,S,{\boldsymbol{I}})$} & 
\text{Decomposition} & \text{Graph of $(\Wti,\Jti)$} \\
\hline\hline
\espace 
I_2(2m) & 
\begin{picture}(50,20)
\put(0,2){\circle*{6}}\put(-5,7){$\SS{s}$}
\put(3,2){\line(1,0){34}}\put(14,6){$2m$}
\put(40,2){\circle{6}}\put(42,7){$\SS{t}$}
\end{picture}
& (\ZM/2\ZM) \ltimes W(I_2(m)) & 
\begin{picture}(50,20)
\put(0,2){\circle{6}}\put(-10,7){$\SS{sts}$}
\put(3,2){\line(1,0){34}}\put(14,6){$m$}
\put(40,2){\circle{6}}\put(42,7){$\SS{t}$}
\put(-5,10){~}
\end{picture}\\
\hline
\vphantom{\DS{\frac{A^{\DS{A^A}}}{A_{\DS{A_A}}}}} F_4 & 
\begin{picture}(80,20)
\put(0,2){\circle*{6}}\put(-5,9){$\SS{s_2}$}
\put(25,2){\circle*{6}}\put(20,9){$\SS{s_1}$}
\put(50,2){\circle{6}}\put(45,9){$\SS{t_1}$}
\put(75,2){\circle{6}}\put(70,9){$\SS{t_2}$}
\put(3,2){\line(1,0){19}}
\put(27.7,3.2){\line(1,0){19.4}}
\put(27.7,0.8){\line(1,0){19.4}}
\put(53,2){\line(1,0){19}}
\end{picture}
& \SG_3 \ltimes W(D_4) & 
\begin{picture}(70,20)
\put(50,2){\circle{6}}\put(48,9){$\SS{t_2}$}
\put(75,2){\circle{6}}\put(70,9){$\SS{t_1}$}
\put(53,2){\line(1,0){19}}
\put(30,12){\circle{6}}\put(0,12){$\SS{s_1t_1s_1}$}
\put(30,-8){\circle{6}}\put(-15,-8){$\SS{s_2s_1t_1s_1s_2}$}
\put(32.7,10.7){\line(2,-1){14.5}}
\put(32.7,-6.7){\line(2,1){14.5}}
\end{picture}
\\
\hline
\vphantom{\DS{\frac{A^{\DS{A^A}}}{A_{\DS{A_A}}}}}B_n &
\begin{picture}(110,20)
\put(5,2){\circle*{6}}\put(0,9){$\SS{t}$}
\put(30,2){\circle{6}}\put(25,9){$\SS{s_1}$}
\put(55,2){\circle{6}}\put(50,9){$\SS{s_2}$}
\put(105,2){\circle{6}}\put(95,9){$\SS{s_{n-1}}$}
\put(33,2){\line(1,0){19}}
\put(7.7,3.2){\line(1,0){19.4}}
\put(7.7,0.8){\line(1,0){19.4}}
\put(58,2){\line(1,0){12}}
\put(102,2){\line(-1,0){12}}
\put(73,-1){$\cdots$}
\end{picture}
&  (\ZM/2\ZM) \ltimes W(D_n) & 
\begin{picture}(125,20)
\put(50,2){\circle{6}}\put(48,9){$\SS{s_2}$}
\put(75,2){\circle{6}}\put(70,9){$\SS{s_3}$}
\put(53,2){\line(1,0){19}}
\put(30,12){\circle{6}}\put(9,12){$\SS{ts_1t}$}
\put(30,-8){\circle{6}}\put(15,-8){$\SS{s_1}$}
\put(32.7,10.7){\line(2,-1){14.5}}
\put(32.7,-6.7){\line(2,1){14.5}}
\put(120,2){\circle{6}}\put(110,9){$\SS{s_{n-1}}$}
\put(78,2){\line(1,0){11}}
\put(117,2){\line(-1,0){11}}
\put(91,-1){$\cdots$}
\end{picture}
\\
\vphantom{\DS{\frac{A^{\DS{A^A}}}{A_{\DS{A_A}}}}} 
\begin{picture}(30,25)
\put(-2.5,20){$(n \ge 2)$}
\end{picture}&  
\begin{picture}(110,20)
\put(5,2){\circle{6}}\put(0,9){$\SS{t}$}
\put(30,2){\circle*{6}}\put(25,9){$\SS{s_1}$}
\put(55,2){\circle*{6}}\put(50,9){$\SS{s_2}$}
\put(105,2){\circle*{6}}\put(95,9){$\SS{s_{n-1}}$}
\put(33,2){\line(1,0){19}}
\put(7.7,3.2){\line(1,0){19.4}}
\put(7.7,0.8){\line(1,0){19.4}}
\put(58,2){\line(1,0){12}}
\put(102,2){\line(-1,0){12}}
\put(73,-1){$\cdots$}
\end{picture}   & \SG_n \ltimes (\ZM/2\ZM)^n&
\begin{picture}(90,20)
\put(5,2){\circle{6}}\put(0,9){$\SS{t_1}$}
\put(30,2){\circle{6}}\put(25,9){$\SS{t_2}$}
\put(80,2){\circle{6}}\put(76,9){$\SS{t_n}$}
\put(48,-1){$\cdots$}
\put(95,-1){(a)}
\end{picture}
\\
\hline
\vphantom{\DS{\frac{A^{\DS{A^A}}}{A_{\DS{A_A}}}}} \widetilde{G}_2 & 
\begin{picture}(60,20)
\put(5,2){\circle*{6}}\put(0,9){$\SS{t}$}
\put(30,2){\circle{6}}\put(25,9){$\SS{s_1}$}
\put(55,2){\circle{6}}\put(50,9){$\SS{s_2}$}
\put(33,2){\line(1,0){19}}
\put(7.4,3.8){\line(1,0){20.2}}
\put(7.4,0.2){\line(1,0){20.2}}
\put(8,2){\line(1,0){19}}
\end{picture} & (\ZM/2\ZM) \ltimes W(\widetilde{A}_2) & 
\begin{picture}(50,30)
\put(10,2){\circle{6}}\put(-2,2){$\SS{s_1}$}
\put(40,2){\circle{6}}\put(44,2){$\SS{s_2}$}
\put(25,17){\circle{6}}\put(29,20){$\SS{ts_1t}$}
\put(13,2){\line(1,0){24}}
\put(12.3,4.3){\line(1,1){10.6}}
\put(37.7,4.3){\line(-1,1){10.6}}
\end{picture}\\
\vphantom{\DS{\frac{A^{\DS{A^A}}}{A_{\DS{A_A}}}}} &
\begin{picture}(60,20)
\put(5,2){\circle{6}}\put(0,9){$\SS{t}$}
\put(30,2){\circle*{6}}\put(25,9){$\SS{s_1}$}
\put(55,2){\circle*{6}}\put(50,9){$\SS{s_2}$}
\put(33,2){\line(1,0){19}}
\put(7.4,3.8){\line(1,0){20.2}}
\put(7.4,0.2){\line(1,0){20.2}}
\put(8,2){\line(1,0){19}}
\end{picture} & \SG_3 \ltimes W(\widetilde{A}_2)& 
\begin{picture}(50,30)
\put(10,2){\circle{6}}\put(1,2){$\SS{t}$}
\put(40,2){\circle{6}}\put(44,2){$\SS{s_1ts_1}$}
\put(25,17){\circle{6}}\put(29,20){$\SS{s_2s_1ts_1s_2}$}
\put(13,2){\line(1,0){24}}
\put(12.3,4.3){\line(1,1){10.6}}
\put(37.7,4.3){\line(-1,1){10.6}}
\end{picture}\\
\hline
\vphantom{\DS{\frac{A^{\DS{A^A}}}{A_{\DS{A_A}}}}} \widetilde{F}_4 & 
\begin{picture}(100,20)
\put(0,2){\circle*{6}}\put(-5,9){$\SS{s_2}$}
\put(25,2){\circle*{6}}\put(20,9){$\SS{s_1}$}
\put(50,2){\circle{6}}\put(45,9){$\SS{t_1}$}
\put(75,2){\circle{6}}\put(70,9){$\SS{t_2}$}
\put(100,2){\circle{6}}\put(95,9){$\SS{t_3}$}
\put(3,2){\line(1,0){19}}
\put(27.7,3.2){\line(1,0){19.4}}
\put(27.7,0.8){\line(1,0){19.4}}
\put(53,2){\line(1,0){19}}
\put(78,2){\line(1,0){19}}
\end{picture}
& \SG_3 \ltimes W(\widetilde{D}_4) & 
\begin{picture}(85,20)
\put(30,2){\circle{6}}\put(28,9){$\SS{t_2}$}
\put(50,12){\circle{6}}\put(55,12){$\SS{s_1t_1s_1}$}
\put(50,-8){\circle{6}}\put(55,-8){$\SS{s_2s_1t_1s_1s_2}$}
\put(10,12){\circle{6}}\put(-2,12){$\SS{t_1}$}
\put(10,-8){\circle{6}}\put(-2,-8){$\SS{t_3}$}
\put(12.7,10.7){\line(2,-1){14.5}}
\put(12.7,-6.7){\line(2,1){14.5}}
\put(47.3,10.7){\line(-2,-1){14.5}}
\put(47.3,-6.7){\line(-2,1){14.5}}
\end{picture}
\\
\vphantom{\DS{\frac{A^{\DS{A^A}}}{A_{\DS{A_A}}}}} & 
\begin{picture}(100,20)
\put(0,2){\circle{6}}\put(-5,9){$\SS{s_2}$}
\put(25,2){\circle{6}}\put(20,9){$\SS{s_1}$}
\put(50,2){\circle*{6}}\put(45,9){$\SS{t_1}$}
\put(75,2){\circle*{6}}\put(70,9){$\SS{t_2}$}
\put(100,2){\circle*{6}}\put(95,9){$\SS{t_3}$}
\put(3,2){\line(1,0){19}}
\put(27.7,3.2){\line(1,0){19.4}}
\put(27.7,0.8){\line(1,0){19.4}}
\put(53,2){\line(1,0){19}}
\put(78,2){\line(1,0){19}}
\end{picture}
& \SG_4 \ltimes W(\widetilde{D}_4) & 
\begin{picture}(85,20)
\put(30,2){\circle{6}}\put(28,9){$\SS{s_2}$}
\put(50,12){\circle{6}}\put(55,12){$\SS{t_2t_1s_1t_1t_2}$}
\put(50,-8){\circle{6}}\put(55,-8){$\SS{t_3t_2t_1s_1t_1t_2t_3}$}
\put(10,12){\circle{6}}\put(-18,12){$\SS{t_1s_1t_1}$}
\put(10,-8){\circle{6}}\put(-2,-8){$\SS{s_1}$}
\put(12.7,10.7){\line(2,-1){14.5}}
\put(12.7,-6.7){\line(2,1){14.5}}
\put(47.3,10.7){\line(-2,-1){14.5}}
\put(47.3,-6.7){\line(-2,1){14.5}}
\end{picture}
\\
\hline
\vphantom{\DS{\frac{A^{\DS{A^A}}}{A_{\DS{A_A}}}}}\widetilde{B}_n &
\begin{picture}(125,20)
\put(5,2){\circle*{6}}\put(0,9){$\SS{t}$}
\put(30,2){\circle{6}}\put(25,9){$\SS{s_1}$}
\put(80,2){\circle{6}}\put(70,9){$\SS{s_{n-2}}$}
\put(100,12){\circle{6}}\put(105,12){$\SS{s_{n-1}}$}
\put(100,-8){\circle{6}}\put(105,-8){$\SS{s_{n}}$}
\put(82.6,3.5){\line(2,1){14.7}}
\put(82.6,0.5){\line(2,-1){14.7}}
\put(7.7,3.2){\line(1,0){19.4}}
\put(7.7,0.8){\line(1,0){19.4}}
\put(33,2){\line(1,0){12}}
\put(77,2){\line(-1,0){12}}
\put(48,-1){$\cdots$}
\end{picture}
&  (\ZM/2\ZM) \ltimes W(\widetilde{D}_n) & 
\begin{picture}(130,20)
\put(40,2){\circle{6}}\put(38,9){$\SS{s_2}$}
\put(20,12){\circle{6}}\put(1,12){$\SS{ts_1t}$}
\put(20,-8){\circle{6}}\put(5,-8){$\SS{s_1}$}
\put(22.7,10.7){\line(2,-1){14.5}}
\put(22.7,-6.7){\line(2,1){14.5}}
\put(85,2){\circle{6}}\put(75,9){$\SS{s_{n-2}}$}
\put(105,12){\circle{6}}\put(110,12){$\SS{s_{n-1}}$}
\put(105,-8){\circle{6}}\put(110,-8){$\SS{s_{n}}$}
\put(87.6,3.5){\line(2,1){14.7}}
\put(87.6,0.5){\line(2,-1){14.7}}
\put(43,2){\line(1,0){11}}
\put(82,2){\line(-1,0){11}}
\put(56,-1){$\cdots$}
\put(56,-12){(1)}
\end{picture}
\\
\begin{picture}(30,45)
\put(-2.5,35){$(n \ge 3)$}
\end{picture}
&\begin{picture}(125,45)
\put(5,17){\circle{6}}\put(0,24){$\SS{t}$}
\put(30,17){\circle*{6}}\put(25,24){$\SS{s_1}$}
\put(80,17){\circle*{6}}\put(70,24){$\SS{s_{n-2}}$}
\put(100,27){\circle*{6}}\put(105,27){$\SS{s_{n-1}}$}
\put(100,7){\circle*{6}}\put(105,7){$\SS{s_{n}}$}
\put(82.6,18.5){\line(2,1){14.7}}
\put(82.6,15.5){\line(2,-1){14.7}}
\put(7.7,18.2){\line(1,0){19.4}}
\put(7.7,15.8){\line(1,0){19.4}}
\put(33,17){\line(1,0){12}}
\put(77,17){\line(-1,0){12}}
\put(48,14){$\cdots$}
\end{picture}
&  
\begin{picture}(100,45)
\put(0,14){$W(D_n) \ltimes \bigl(W(\widetilde{A}_1)\bigr)^n$}
\end{picture}&
\begin{picture}(90,45)
\put(5,27){\circle{6}}\put(0,34){$\SS{t_1}$}\put(5,24){\line(0,-1){14}}\put(-5,15){$\SS{\infty}$}
\put(30,27){\circle{6}}\put(25,34){$\SS{t_2}$}\put(30,24){\line(0,-1){14}}
\put(20,15){$\SS{\infty}$}
\put(80,27){\circle{6}}\put(76,34){$\SS{t_n}$}\put(80,24){\line(0,-1){14}}
\put(70,15){$\SS{\infty}$}
\put(48,14){$\cdots$}
\put(5,7){\circle{6}}\put(0,-4){$\SS{t_1'}$}
\put(30,7){\circle{6}}\put(25,-4){$\SS{t_2'}$}
\put(80,7){\circle{6}}\put(76,-4){$\SS{t_n'}$}
\put(95,15){(b)}
\end{picture}\vphantom{\DS{\frac{A^{\DS{A^A}}}{A_{\DS{A_A}}}}}
\\
\hline
\vphantom{\DS{\frac{A^{\DS{A^A}}}{A_{\DS{A_A}}}}}\widetilde{C}_n &
\begin{picture}(110,20)
\put(5,2){\circle*{6}}\put(0,9){$\SS{t}$}
\put(30,2){\circle{6}}\put(25,9){$\SS{s_1}$}
\put(80,2){\circle{6}}\put(70,9){$\SS{s_{n-1}}$}
\put(105,2){\circle{6}}\put(103,9){$\SS{t'}$}
\put(7.7,3.2){\line(1,0){19.4}}
\put(7.7,0.8){\line(1,0){19.4}}
\put(82.7,3.2){\line(1,0){19.4}}
\put(82.7,0.8){\line(1,0){19.4}}
\put(33,2){\line(1,0){12}}
\put(77,2){\line(-1,0){12}}
\put(48,-1){$\cdots$}
\end{picture}
&  (\ZM/2\ZM) \ltimes W(\widetilde{B}_n) & 
\begin{picture}(130,20)
\put(40,2){\circle{6}}\put(38,9){$\SS{s_2}$}
\put(20,12){\circle{6}}\put(1,12){$\SS{ts_1t}$}
\put(20,-8){\circle{6}}\put(5,-8){$\SS{s_1}$}
\put(22.7,10.7){\line(2,-1){14.5}}
\put(22.7,-6.7){\line(2,1){14.5}}
\put(85,2){\circle{6}}\put(75,9){$\SS{s_{n-1}}$}
\put(110,2){\circle{6}}\put(108,9){$\SS{t'}$}
\put(87.7,3.2){\line(1,0){19.4}}
\put(87.7,0.8){\line(1,0){19.4}}
\put(43,2){\line(1,0){11}}
\put(82,2){\line(-1,0){11}}
\put(56,-1){$\cdots$}
\put(56,-12){(2)}
\end{picture}
\\
\begin{picture}(30,45)
\put(-2.5,35){($n \ge 2$)}
\end{picture}
\vphantom{\DS{\frac{A^{\DS{A^A}}}{A_{\DS{A_A}}}}}
 &
\begin{picture}(110,45)
\put(5,17){\circle{6}}\put(0,24){$\SS{t}$}
\put(30,17){\circle*{6}}\put(25,24){$\SS{s_1}$}
\put(80,17){\circle*{6}}\put(70,24){$\SS{s_{n-1}}$}
\put(105,17){\circle{6}}\put(103,24){$\SS{t'}$}
\put(7.7,18.2){\line(1,0){19.4}}
\put(7.7,15.8){\line(1,0){19.4}}
\put(82.7,18.2){\line(1,0){19.4}}
\put(82.7,15.8){\line(1,0){19.4}}
\put(33,17){\line(1,0){12}}
\put(77,17){\line(-1,0){12}}
\put(48,14){$\cdots$}
\end{picture}
&\begin{picture}(83,45)
\put(0,14){$\SG_n \ltimes \bigl(W(\widetilde{A}_1)\bigr)^n$}
\end{picture}&
\begin{picture}(90,45)
\put(5,27){\circle{6}}\put(0,34){$\SS{t_1}$}\put(5,24){\line(0,-1){14}}\put(-5,15){$\SS{\infty}$}
\put(30,27){\circle{6}}\put(25,34){$\SS{t_2}$}\put(30,24){\line(0,-1){14}}
\put(20,15){$\SS{\infty}$}
\put(80,27){\circle{6}}\put(76,34){$\SS{t_n}$}\put(80,24){\line(0,-1){14}}
\put(70,15){$\SS{\infty}$}
\put(48,14){$\cdots$}
\put(5,7){\circle{6}}\put(0,-4){$\SS{t_1'}$}
\put(30,7){\circle{6}}\put(25,-4){$\SS{t_2'}$}
\put(80,7){\circle{6}}\put(76,-4){$\SS{t_n'}$}
\put(95,15){(c)}
\end{picture}\vphantom{\DS{\frac{A^{\DS{A^A}}}{A_{\DS{A_A}}}}}
\\
\vphantom{\DS{\frac{A^{\DS{A^A}}}{A_{\DS{A_A}}}}}
 &
\begin{picture}(110,45)
\put(5,17){\circle{6}}\put(0,24){$\SS{t}$}
\put(30,17){\circle*{6}}\put(25,24){$\SS{s_1}$}
\put(80,17){\circle*{6}}\put(70,24){$\SS{s_{n-1}}$}
\put(105,17){\circle*{6}}\put(103,24){$\SS{t'}$}
\put(7.7,18.2){\line(1,0){19.4}}
\put(7.7,15.8){\line(1,0){19.4}}
\put(82.7,18.2){\line(1,0){19.4}}
\put(82.7,15.8){\line(1,0){19.4}}
\put(33,17){\line(1,0){12}}
\put(77,17){\line(-1,0){12}}
\put(48,14){$\cdots$}
\end{picture}
&\begin{picture}(100,45)
\put(0,14){$W(B_n) \ltimes \bigl(W(\widetilde{A}_1)\bigr)^n$}
\end{picture}&
\begin{picture}(90,45)
\put(5,27){\circle{6}}\put(0,34){$\SS{t_1}$}\put(5,24){\line(0,-1){14}}\put(-5,15){$\SS{\infty}$}
\put(30,27){\circle{6}}\put(25,34){$\SS{t_2}$}\put(30,24){\line(0,-1){14}}
\put(20,15){$\SS{\infty}$}
\put(80,27){\circle{6}}\put(76,34){$\SS{t_n}$}\put(80,24){\line(0,-1){14}}
\put(70,15){$\SS{\infty}$}
\put(48,14){$\cdots$}
\put(5,7){\circle{6}}\put(0,-4){$\SS{t_1'}$}
\put(30,7){\circle{6}}\put(25,-4){$\SS{t_2'}$}
\put(80,7){\circle{6}}\put(76,-4){$\SS{t_n'}$}\put(95,15){(d)}
\end{picture}\vphantom{\DS{\frac{A^{\DS{A^A}}}{A_{\DS{A_A}}}}}
\\
\vphantom{\DS{\frac{A^{\DS{A^A}}}{A_{\DS{A_A}}}}}
 &
\begin{picture}(110,20)
\put(5,2){\circle*{6}}\put(0,9){$\SS{t}$}
\put(30,2){\circle{6}}\put(25,9){$\SS{s_1}$}
\put(80,2){\circle{6}}\put(70,9){$\SS{s_{n-1}}$}
\put(105,2){\circle*{6}}\put(103,9){$\SS{t'}$}
\put(7.3,3.2){\line(1,0){19.9}}
\put(7.3,0.8){\line(1,0){19.9}}
\put(82.7,3.2){\line(1,0){20}}
\put(82.7,0.8){\line(1,0){20}}
\put(33,2){\line(1,0){12}}
\put(77,2){\line(-1,0){12}}
\put(48,-1){$\cdots$}
\end{picture}

& (\SG_2 \times \SG_2) \ltimes W(\widetilde{D}_n)&
\begin{picture}(140,20)
\put(40,2){\circle{6}}\put(38,9){$\SS{s_2}$}
\put(20,12){\circle{6}}\put(1,12){$\SS{ts_1t}$}
\put(20,-8){\circle{6}}\put(5,-8){$\SS{s_1}$}
\put(22.7,10.7){\line(2,-1){14.5}}
\put(22.7,-6.7){\line(2,1){14.5}}
\put(85,2){\circle{6}}\put(75,9){$\SS{s_{n-2}}$}
\put(105,12){\circle{6}}\put(110,12){$\SS{s_{n-1}}$}
\put(105,-8){\circle{6}}\put(110,-8){$\SS{t's_{n-1}t'}$}
\put(87.6,3.5){\line(2,1){14.7}}
\put(87.6,0.5){\line(2,-1){14.7}}
\put(43,2){\line(1,0){11}}
\put(82,2){\line(-1,0){11}}
\put(56,-1){$\cdots$}
\put(56,-11){(3)}
\end{picture}\\
\hline
\end{array}$$

\newpage

\end{document}